\title{Deformations of cohesive modules on compact complex manifolds}
\author{ Zhaoting Wei\\ Texas A\& M University-Commerce\\ zhaoting.wei@tamuc.edu}
\date{}
\begin{document}
\newcommand{\End}{\text{End}}
\newcommand{\HBC}{H_\text{BC}}
\newcommand{\chBC}{\text{ch}_\text{BC}}
\newcommand{\Hom}{\text{Hom}}
\newcommand{\ad}{\text{ad}}
\newcommand{\id}{\text{id}}
\newcommand{\str}{\text{Tr}_{\text{s}}}
\newcommand{\pr}{\text{pr}}
\newcommand{\Ad}{\text{Ad}}
\newcommand{\Vect}{\text{Vect}}
\newcommand{\CoAd}{\text{CoAd}}
\newcommand{\coad}{\text{coad}}
\newcommand{\Pol}{\text{Pol}}
\newcommand{\Cl}{\text{Cl}}
\newcommand{\As}{\text{As}}
\newcommand{\ch}{\text{ch}}
\newcommand{\Lied}{\text{L}}
\newcommand{\Ext}{\text{Ext}}
\newcommand{\ZZ}{\mathbb{Z}/2\mathbb{Z}}
\newcommand{\bHom}{\mathbb{H}\text{om}}
\newcommand{\bCone}{\underline{\mathbb{C}\text{one}}}
\newcommand{\Image}{\text{Image }}
\newcommand{\Acyc}{\text{Acyc}}
\newcommand{\Cinfty}{C^{\infty}}
\newcommand{\gb}{\text{gb}}
\newcommand{\diff}{\text{d}}
\newcommand{\Diff}{\text{Diff}}
\newcommand{\dpar}{\partial}
\newcommand{\dbar}{\overline{\partial}}
\newcommand{\pprime}{\prime\prime}
\newcommand{\coh}{\text{coh}}
\newcommand{\KS}{\text{KS}}
\newcommand{\ku}{\text{ku}}

\newtheorem{thm}{Theorem}[section]
\newtheorem{lemma}[thm]{Lemma}
\newtheorem{prop}[thm]{Proposition}
\newtheorem{coro}[thm]{Corollary}
\newtheorem{defi}{Definition}[section]
\newtheorem{eg}{Example}[section]
\newtheorem{rmk}{Remark}[section]

\numberwithin{equation}{section}

\maketitle

\begin{abstract}
Cohesive modules give a dg-enhancement of the bounded derived category of coherent sheaves on a complex manifold via superconnections. In this paper we discuss the deformation theory of cohesive modules on compact complex manifolds. This generalizes the deformation theory of holomorphic vector bundles and coherent sheaves. We also develop the theory of Kuranishi maps and obstructions of deformations of cohesive modules and give some examples of unobstructed deformations.

Key words: cohesive modules, superconnection, deformation, differential graded Lie algebras, Kuranishi map

Mathematics Subject Classification 2020: 18D20, 32G08, 32Q99, 53C05
\end{abstract}

\section{Introduction}
In \cite{block2010duality} Block introduced the concept of \emph{cohesive modules}.  For a compact complex manifold $X$, a cohesive module $\mathcal{E}$ on $X$ consists of a cochain complex of $C^{\infty}$ vector bundles $E^{\bullet}$ together with a flat $\dbar$-superconnection $A^{E^{\bullet}\prime\prime}$. Block proved in \cite{block2010duality} that cohesive modules on $X$ form a dg-category $B(X)$, which gives a dg-enhancement of $D^b_{\text{coh}}(X)$, the bounded derived category of coherent sheaves on $X$.  Moreover, \cite{chuang2021maurer} generalizes the result in \cite{block2010duality} to the case that $X$ is non-compact with a slightly more restricted definition of coherent sheaves.  See Section \ref{section: cohesive modules} for a quick review and \cite{bismut2021coherent} for some applications.

One of the advantages of the dg-category of cohesive modules $B(X)$ over the derived category $D^b_{\text{coh}}(X)$ is that any quasi-isomorphism in $B(X)$ has a homotopy inverse.
In this paper we develop  the deformation theory of cohesive modules over a compact complex manifold $X$. 

We first review the classic deformation theory of holomorphic vector bundles as in \cite[Section 3]{narasimhan1982deformations} and \cite[Section 6.4.1]{donaldson1990geometry}. 
Let $E$ be a finite dimensional holomorphic vector bundle on $X$. For a small disk $\Delta\subset \mathbb{C}^m$ which contains $0$, a family of deformations of $E$ over $\Delta$ is a holomorphic vector bundle $F$ over $X\times \Delta$ together with a biholomorphic isomorphism 
\begin{equation}
\phi: F|_{X\times \{0\}}\overset{\simeq}{\to} E.
\end{equation}
Two deformations $(F,\phi)$ and $(G,\psi)$ are called equivalent if there exists a biholomorphic isomorphism $\theta: F\overset{\simeq}{\to} G$ such that $\psi\circ\theta|_{X\times \{0\}}=\phi$.

By Koszul-Malgrange theorem \cite{koszul1958certaines}, a holomorphic vector bundle is nothing but a $C^{\infty}$-vector bundle together with a flat $\dbar$-connection. Since $X$ is compact, by Ehresmann's fibration theorem, for $\Delta$ sufficiently small, $F$ is isomorphic to $E$ as a $C^{\infty}$-vector bundle. Hence the deformation only changes the $\dbar$-connection.   Let $\dbar_E$ be the $\dbar$-connetion on $E$. Then a family of deformations of $E$ over $\Delta$ is a family of $\dbar$-connections $\dbar_E+\epsilon(t)$ with
\begin{equation}
\epsilon(t)\in \Omega^{0,1}(X, \End(E)) \text{ for each }t\in \Delta
\end{equation}
such that $\epsilon(t)$ depends holomorphically on $t$ and $\epsilon(0)=0$. Moreover, $(\dbar_E+\epsilon(t))^2=0$ gives
\begin{equation}\label{eq: Maurer-Cartan equation for holomorphic vector bundle}
\dbar_E(\epsilon(t))+\frac{1}{2}[\epsilon(t),\epsilon(t)]=0.
\end{equation}
If we consider the \emph{differential graded Lie algebra (DGLA)}
\begin{equation}\label{eq: dgla for holomorphic vector bundle}
L_E:=(\Omega^{0,\bullet}(X, \End(E)), [-,-], \dbar_E)
\end{equation}
where $[-,-]$ is the supercommutator,
then \eqref{eq: Maurer-Cartan equation for holomorphic vector bundle} means that $\epsilon(t)$ is a Maurer-Cartan element in $L_E$. We can further show that equivalent deformations correspond to gauge equivalent Maurer-Cartan elements in $L_E$.

 Inspired by the  deformation theory of holomorphic vector bundles, we have the following definition.
\begin{defi}\label{defi: deformation of cohesive modules in introduction}[See Definition \ref{defi: deformation of cohesive modules} below]
For a cohesive module $\mathcal{E}\in B(X)$, a family of deformation of $\mathcal{E}$ over $\Delta$ is a cohesive module $\mathfrak{F}$
on $X\times \Delta$ together with a homotopy equivalence 
\begin{equation}\label{eq: deformation restrict to 0}
\phi: \mathfrak{F}|_{X\times \{0\}}\overset{\sim}{\to} \mathcal{E}.
\end{equation}

Two families of deformations $(\mathfrak{F},\phi)$ and $(\mathfrak{G},\psi)$ of  $\mathcal{E}$ over $\Delta$ are  called equivalent if there exists a homotopy equivalence $\theta: \mathfrak{F}\to \mathfrak{G}$ such that
\begin{equation}
\psi\circ \theta|_{X\times \{0\}}=\phi \text{ up to chain homotopy.}
\end{equation}
\end{defi}

We denote  the set of equivalent classes of  deformations of $\mathcal{E}$  over $\Delta$ by $\text{Def}_{\Delta}(\mathcal{E})$. It is easy to see that $\text{Def}_{\Delta}(\mathcal{E}_1)$ and $\text{Def}_{\Delta}(\mathcal{E}_2)$ are bijective if $\mathcal{E}_1$ and $\mathcal{E}_2$ are homotopy equivalent. Hence Definition \ref{defi: deformation of cohesive modules in introduction} actually gives a deformation theory of objects in  $D^b_{\text{coh}}(X)$.

To further study  $\text{Def}_{\Delta}(\mathcal{E})$, we introduce the following DGLA which generalizes the $L_E$ in \eqref{eq: dgla for holomorphic vector bundle}.
\begin{equation}\label{eq: dgla LE in introduction}
L_{\mathcal{E}}:=(\Omega^{0,\bullet}(X,\End(E^{\bullet})),[-,-], \diff_{\mathcal{E}}).
\end{equation}  
See Section \ref{subsection: DGLA LE} below for details.

 However, when trying to relate deformations of $\mathcal{E}$ to Maurer-Cartan elements in $L_{\mathcal{E}}$, we find two problems which did not occur for holomorphic vector bundles:
\begin{enumerate}
\item The superconnection $\mathfrak{F}$ has higher terms in both the $X$ and $\Delta$ direction. Therefore $\mathfrak{F}$ is in general not a holomorphic family of cohesive modules over $\Delta$.
\item The morphism $\phi$ is invertible \emph{up to homotopy}, which means that $\mathfrak{F}|_{X\times \{0\}}$ and $\mathcal{E}$ are not the same cohesive module.
\end{enumerate}

We solve  Problem 1 in Section \ref{subsection: deformations and regular deformations} by introducing the concept of regular deformations and proving that every deformation is equivalent to a regular one. We solve Problem 2 in Section \ref{subsection: strong deformations} by introducing the concept of strong deformations and proving that every deformation is equivalent to a strong deformation. Then we prove the following theorem:

\begin{thm}\label{thm: deformation and Maurer-Cartan in introduction}[See Theorem \ref{thm: deformation and Maurer-Cartan} below]
For $\Delta$ sufficiently small, there is a bijection between the set of equivalent classes of deformations of $\mathcal{E}$ over $\Delta$ and gauge equivalent classes of Maurer-Cartan elements in $L_{\mathcal{E}}\otimes \mathcal{M}(\Delta)$, where $L_{\mathcal{E}}\otimes \mathcal{M}(\Delta)$ denotes holomorphic families of elements in $L_{\mathcal{E}}$ which vanish at $0\in \Delta$.
\end{thm}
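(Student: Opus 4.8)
The plan is to construct the bijection of Theorem~\ref{thm: deformation and Maurer-Cartan} in stages, using the two reductions already announced in the introduction. First I would invoke the result of Section~\ref{subsection: deformations and regular deformations} to replace an arbitrary deformation $(\mathfrak{F},\phi)$ by an equivalent \emph{regular} one, so that the superconnection on $X\times\Delta$ has no higher terms in the $\Delta$-direction; then I would invoke the result of Section~\ref{subsection: strong deformations} to further replace it by an equivalent \emph{strong} deformation, so that $\mathfrak{F}|_{X\times\{0\}}$ is literally $\mathcal{E}$, not merely homotopy equivalent to it, and $\phi$ is (chain-homotopic to) the identity. After these two reductions, a deformation is encoded by a single perturbation $\epsilon(t)$ of the differential $\diff_{\mathcal{E}}$, holomorphic in $t\in\Delta$ with $\epsilon(0)=0$, living in $\Omega^{0,\bullet}(X,\End(E^{\bullet}))$; this is exactly an element of $L_{\mathcal{E}}\otimes\mathcal{M}(\Delta)$. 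The flatness condition $(\diff_{\mathcal{E}}+\epsilon(t))^2=0$ unwinds, using $\diff_{\mathcal{E}}^2=0$, to the Maurer–Cartan equation $\diff_{\mathcal{E}}\epsilon(t)+\tfrac12[\epsilon(t),\epsilon(t)]=0$ in $L_{\mathcal{E}}$, just as in \eqref{eq: Maurer-Cartan equation for holomorphic vector bundle}. This gives the map from equivalence classes of deformations to Maurer–Cartan elements.

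Next I would check that this assignment is well defined on equivalence classes and injective, i.e.\ that equivalent strong regular deformations give gauge-equivalent Maurer–Cartan elements. Here one takes a homotopy equivalence $\theta:\mathfrak{F}\to\mathfrak{G}$ compatible with $\phi,\psi$ up to homotopy; after the reductions $\theta$ may be taken to be a genuine degree-zero automorphism of $E^{\bullet}$ over $X\times\Delta$ restricting to the identity at $0$, i.e.\ $\theta=\exp(\lambda(t))$ with $\lambda(t)\in\Omega^{0,0}(X,\End(E^{\bullet}))\otimes\mathcal{M}(\Delta)$, and the conjugation relation $\theta\circ(\diff_{\mathcal{E}}+\epsilon_{\mathfrak{F}})\circ\theta^{-1}=\diff_{\mathcal{E}}+\epsilon_{\mathfrak{G}}$ is precisely the gauge action of $\exp(\lambda(t))$ sending $\epsilon_{\mathfrak{F}}$ to $\epsilon_{\mathfrak{G}}$. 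For surjectivity, given a Maurer–Cartan element $\epsilon(t)\in L_{\mathcal{E}}\otimes\mathcal{M}(\Delta)$ one simply forms the superconnection $\diff_{\mathcal{E}}+\epsilon(t)$ on $E^{\bullet}$ pulled back to $X\times\Delta$; the Maurer–Cartan equation guarantees it is flat, hence defines a cohesive module $\mathfrak{F}$ on $X\times\Delta$, and $\epsilon(0)=0$ gives the identification $\phi:\mathfrak{F}|_{X\times\{0\}}\xrightarrow{\sim}\mathcal{E}$. One must also confirm that for $\Delta$ sufficiently small the flat superconnection so obtained genuinely depends holomorphically on $t$ and thus is a cohesive module in the sense of Block on the product — this is where the smallness of $\Delta$ and convergence of the relevant exponential/Neumann series enter.

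The main obstacle I expect is not the Maurer–Cartan bookkeeping, which is formally identical to the vector-bundle case, but rather the passage through the two reductions while keeping track of equivalence classes: one must verify that the maps ``deformation $\mapsto$ equivalent regular deformation'' and ``regular deformation $\mapsto$ equivalent strong regular deformation'' are well defined up to equivalence and mutually compatible, so that the composite descends to a map of equivalence classes with a two-sided inverse. In particular, in Problem~2 the homotopy $\phi$ being only a homotopy equivalence means that to make $\theta|_{X\times\{0\}}=\phi$ hold \emph{up to chain homotopy} one has to absorb the chain homotopy into the gauge transformation, using that null-homotopic automorphisms of $\mathcal{E}$ act trivially on gauge-equivalence classes; checking this compatibility carefully is the crux. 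Once the identification of deformations with perturbations of $\diff_{\mathcal{E}}$ is in place, the equivalence ``chain homotopy of the restricted morphisms $\Longleftrightarrow$ gauge equivalence'' follows from the standard dictionary between homotopies in $B(X)$ and the exponentiated adjoint action on $L_{\mathcal{E}}$, together with the fact established in Section~\ref{subsection: DGLA LE} that $L_{\mathcal{E}}$ computes the correct deformation complex. The smallness of $\Delta$ is used throughout to ensure Ehresmann-type triviality of the underlying $C^{\infty}$ complex on $X\times\Delta$ and convergence of all series involved.
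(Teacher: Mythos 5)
Your plan follows the paper's own route essentially step for step: reduce to regular deformations (Proposition \ref{prop: deformation is equivalent to a regular one}), then to strong deformations via homological perturbation (Proposition \ref{prop: deformation and strong deformation}), identify the resulting perturbation $\epsilon(t)$ with a Maurer--Cartan element of $L_{\mathcal{E}}\otimes\mathcal{M}(\Delta)$, and realize equivalences as gauge transformations $e^{u(t)}$ obtained by taking the logarithm of a strong equivalence $\id_{E^{\bullet}}+\vartheta(t)$ (Propositions \ref{prop: equivalence and regular equivalence} and \ref{prop: equivalence and strong equivalence}, Lemma \ref{lemma: strong equivalence is invertible}). The only caveat is that your gauge parameter $\lambda(t)$ must be allowed to live in the full degree-zero part $L^0_{\mathcal{E}}\otimes\mathcal{M}(\Delta)=\bigoplus_{i}C^{\infty}(X,\wedge^{i}\overline{T^{*}X}\hat{\otimes}\End^{-i}(E^{\bullet}))\otimes\mathcal{M}(\Delta)$ rather than just $\Omega^{0,0}(X,\End^{0}(E^{\bullet}))\otimes\mathcal{M}(\Delta)$, since equivalences of cohesive modules generically carry higher antiholomorphic form components.
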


With Theorem \ref{thm: deformation and Maurer-Cartan in introduction}, we can apply analytic method as in \cite[Section 7]{kobayashi2014differential} to further study  deformations of cohesive modules. In particular, we can define the slice space in $L_{\mathcal{E}}\otimes \mathcal{M}(\Delta)$, construct the Kuranishi map, and develop the obstruction theory. We also give some examples of cohesive modules whose deformations are unobstructed. See Section \ref{section: analytic theory} below.

Notice that the method in our paper is differential geometric, which is parallel to the \v{C}ech-theoretic method in \cite{fiorenza2012differential} and the algebraic method in \cite{lieblich2006moduli}.

This paper is organized as follows: In Section \ref{subsection: cohesive modules} we review cohesive modules on compact manifolds. In Section \ref{subsection: DGLA LE} we introduce the DGLA $L_{\mathcal{E}}$ for a cohesive module $\mathcal{E}$. In Section \ref{subsection: cohesive and coherent} we recall the results on the relation between cohesive modules and coherent sheaves. In Section \ref{subsection: pull backs} we discuss the pull-back of cohesive modules by morphism between noncompact complex manifolds. 

In Section \ref{subsection: deformations and regular deformations} we introduce the definition of deformations and regular deformations of cohesive modules and study their  properties. In Section \ref{subsection: strong deformations} we introduce strong deformations, which are holomorphic families of cohesive modules modeled by $\mathcal{E}$. We then prove any deformation is equivalent to a strong deformation and establish the relation between deformations and Maurer-Cartan elements in the  DGLA $L_{\mathcal{E}}\otimes \mathcal{M}(\Delta)$.

In Section \ref{section: infinitesimal deformations} we study infinitesimal deformations. We show that they are $1$-$1$ correspondence to $\Hom_{\underline{B}(X)}(\mathcal{E},\mathcal{E}[1])$.

In Section \ref{subsection: metrics and Laplacians} we introduce Hermitian metrics on cohesive modules and construct the corresponding Laplacians, which generalize the $\dbar$-Laplacian on a holomorphic vector bundle.  In Section \ref{subsection: Sobolev spaces} we review Sobolev spaces and construction the Green operator associated with the Laplacian defined in Section \ref{subsection: metrics and Laplacians}. In Section \ref{subsection: the slice space} we study the slice space in $L_{\mathcal{E}}\otimes \mathcal{M}(\Delta)$. In Section \ref{subsection: Kuranishi map} we introduce and study the Kuranishi map. In Section \ref{subsection: obstruction} we study the obstruction of lifting an infinitesimal deformation to a genuine deformation. In Section \ref{subsection: unobstructed} we give some examples of cohesive modules such that the obstructions always vanish.

\section*{Acknowledgment}
The author wants to thank Jonathan Block for very inspiring discussions. The author also wants to thank J.P. Pridham for kindly answering questions on deformation theory.

\section{A review of cohesive modules on complex manifolds}\label{section: cohesive modules}
\subsection{The definition of cohesive modules}\label{subsection: cohesive modules}
We first fix some notations. Let $X$ be a  complex manifold of dimension $n$. Let $TX$ and $\overline{TX}$ be the holomorphic and antiholomorphic tangent bundle. Let  $T_{\mathbb{R}}X$ be the corresponding real tangent bundle and $T_{\mathbb{C}}X=T_{\mathbb{R}}X\otimes_{\mathbb{R}}\mathbb{C}$ be its complexification. We have the decomposition $T_{\mathbb{C}}X=TX\oplus \overline{TX}$.

The concept of cohesive modules is introduced by Block in \cite{block2010duality}. 

\begin{defi}\label{defi: cohesive module}
Let $X$ be a complex manifold. A \emph{cohesive module} on $X$ is a bounded, finite rank, $\mathbb{Z}$-graded, $C^{\infty}$-vector bundle $E^{\bullet}$ on $X$ together with a superconnection with total degree $1$
$$
A^{E^{\bullet}\prime\prime}: \wedge^{\bullet}\overline{T^{*}X}  \times  E^{\bullet}\to \wedge^{\bullet}\overline{T^{*}X}  \times E^{\bullet}
$$
such that $A^{E^{\bullet}\prime\prime}\circ A^{E^{\bullet}\prime\prime}=0$.

In more details, $A^{E^{\bullet}\prime\prime}$ decomposes into
\begin{equation}\label{eq: decomposition of anti super conn}
A^{E^{\bullet}\prime\prime}=v_0+\nabla^{E^{\bullet}\prime\prime}+v_2+\ldots
\end{equation}
where 
$$
\nabla^{E^{\bullet}\prime\prime}: E^{\bullet}\to \overline{T^{*}X}  \times E^{\bullet}
$$
 is a  $\dbar$-connection, and for $i\neq 1$
\begin{equation}\label{eq: cohesive module}
v_i\in C^{\infty}(X,\wedge^{i}\overline{T^{*}X}  \hat{\otimes}  \End^{1-i}(E^{\bullet}))
\end{equation}
is  $C^{\infty}(X)$-linear. Here $\hat{\otimes} $ denotes the graded tensor product.

Cohesive modules on $X$ forms a dg-category  denoted by $B(X)$. In more details, let $\mathcal{E}=(E^{\bullet}, A^{E^{\bullet}\prime\prime})$ and $\mathcal{F}=(F^{\bullet}, A^{F^{\bullet}\prime\prime})$ be two cohesive modules on $X$ where
$$
A^{E^{\bullet}\prime\prime}=v_0+\nabla^{E^{\bullet}\prime\prime}+v_2+\ldots
$$ 
and
$$
A^{F^{\bullet}\prime\prime}=u_0+\nabla^{F^{\bullet}\prime\prime}+u_2+\ldots
$$ 
A morphism $\phi:\mathcal{E}\to \mathcal{F}$ of degree $k$ is given by
\begin{equation}\label{eq: decomposition of morphism}
\phi=\phi_0+\phi_1+\ldots
\end{equation}
where
$$
\phi_i\in C^{\infty}(X,\wedge^{i}\overline{T^{*}X}  \hat{\otimes}  \Hom^{k-i}( E^{\bullet},F^{\bullet}))
$$
is   $C^{\infty}(X)$-linear.

For 
$$
\phi=\alpha \hat{\otimes} u\in  C^{\infty}(X,\wedge^{i}\overline{T^{*}X}  \hat{\otimes}  \Hom^{k-i}( E^{\bullet},F^{\bullet}))$$ 
and 
$$
\psi=\beta\hat{\otimes} v\in C^{\infty}(X,\wedge^{j}\overline{T^{*}X}  \hat{\otimes}  \Hom^{l-j}( F^{\bullet},G^{\bullet})),
$$ 
their composition $\psi\phi$ is defined as
\begin{equation}\label{eq: composition of morphisms in B(X)}
\psi \phi:=(-1)^{(l-j)i}\beta\alpha\hat{\otimes} vu\in C^{\infty}(X,\wedge^{i+j}\overline{T^{*}X}  \hat{\otimes}  \Hom^{k+l-i-j}( E^{\bullet},G^{\bullet}))
\end{equation}

 The differential of $\phi$ is given by 
\begin{equation}\label{eq: differential in B(X)}
d\phi=A^{F^{\bullet}\prime\prime}\phi-(-1)^k\phi A^{E^{\bullet}\prime\prime}.
\end{equation}
More explicitly, the $l$th component  of $d\phi$ is 
$$
(d\phi)_l\in C^{\infty}(X,\wedge^{l}\overline{T^{*}X}  \hat{\otimes}  \Hom^{k-l+1}( E^{\bullet},F^{\bullet}))
$$
which is given by
\begin{equation}\label{eq: differential in B(X) degree l}
(d\phi)_l=\sum_{i\neq 1}\big(u_i\phi_{l-i}-(-1)^k\phi_{l-i}v_i\big)+\nabla^{F^{\bullet}\prime\prime}\phi_{l-1}-(-1)^k\phi_{l-1}\nabla^{E^{\bullet}\prime\prime}.
\end{equation}
\end{defi}

\begin{rmk}
In \cite{bismut2021coherent} cohesive modules are called \emph{antiholomorphic superconnections}.
\end{rmk}

We can define mapping cones and shift in $B(X)$. For a degree zero closed map $\phi: \mathcal{E}\to \mathcal{F}$ where $\mathcal{E}= (E^{\bullet}, A^{E^{\bullet}\prime\prime})$ and $ \mathcal{F}=(F^{\bullet}, A^{F^{\bullet}\prime\prime})$, its mapping cone $(C^{\bullet}, A^{C^{\bullet}\prime\prime}_{\phi})$ is defined by
\begin{equation}
C^{n}=E^{n+1}\oplus F^n
\end{equation}
and
\begin{equation}
A^{C^{\bullet}\prime\prime}=\begin{bmatrix}A^{E^{\bullet}\prime\prime}&0\\ \phi(-1)^{\deg(\cdot)}& A^{F^{\bullet}\prime\prime}\end{bmatrix}.
\end{equation}
The shift of $\mathcal{E}$ is $\mathcal{E}[1]$ where
\begin{equation}
E[1]^{n}=E^{n+1}
\end{equation}
and
$$
A^{E^{\bullet}\prime\prime}[1]=A^{E^{\bullet}\prime\prime}(-1)^{\deg(\cdot)}.
$$
It is clear that they give $B(X)$ a pre-triangulated structure hence its homotopy category $\underline{B}(X)$ is a triangulated category.

For later purpose, we recall the following definition

\begin{defi}\label{defi: homotopy equivalence}
A degree $0$ closed morphism $\phi$ between cohesive modules $\mathcal{E}$  and $\mathfrak{F}$  is called a homotopy equivalence if it induces an isomorphism in the homotopy category $\underline{B}(X)$.
\end{defi}

We will need the  following result.

\begin{prop}\label{prop: homotopy equiv degree 0}
A degree $0$ closed morphism $\phi$ between cohesive modules $\mathcal{E}= (E^{\bullet}, A^{E^{\bullet}\prime\prime})$ and $\mathfrak{F}=(F^{\bullet}, A^{F^{\bullet}\prime\prime})$  is  a homotopy equivalence if and only if its degree $0$ component $\phi^0:  (E^{\bullet},v_0)\to (F^{\bullet},u_0)$ is a quasi-isomorphism of cochain complexes.
\end{prop}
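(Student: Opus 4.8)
The plan is to reduce the statement to the case $\mathfrak{F}=0$ and then to characterize, homologically, when a cohesive module is homotopy equivalent to the zero object. First I would use the pre-triangulated structure on $B(X)$: a degree $0$ closed morphism $\phi$ is a homotopy equivalence if and only if its mapping cone $(C^{\bullet},A^{C^{\bullet}\prime\prime}_{\phi})$ is homotopy equivalent to the zero cohesive module, i.e.\ $\id_{\text{Cone}(\phi)}$ is null-homotopic. Since the cone is assembled from block matrices that do not mix form degrees, the form-degree-$0$ component of $A^{C^{\bullet}\prime\prime}_{\phi}$ is, up to sign, precisely the mapping cone of the cochain map $\phi^{0}\colon (E^{\bullet},v_{0})\to (F^{\bullet},u_{0})$, and such a cone of complexes of vector bundles is exact if and only if $\phi^{0}$ is a quasi-isomorphism (long exact sequence in cohomology). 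So the proposition reduces to the following assertion: a cohesive module $\mathcal{C}=(C^{\bullet},A^{C^{\bullet}\prime\prime})$ with $A^{C^{\bullet}\prime\prime}=w_{0}+\nabla^{C^{\bullet}\prime\prime}+w_{2}+\cdots$ satisfies $\mathcal{C}\simeq 0$ in $B(X)$ if and only if $(C^{\bullet},w_{0})$ is an exact complex of $C^{\infty}$ vector bundles. (Here ``exact'' may be read in any of its equivalent senses for a bounded complex of smooth bundles: pointwise exact, exact on sections, or admitting a smooth contracting homotopy; the equivalence is a standard splitting argument using local constancy of ranks.)

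The ``only if'' half of this assertion is immediate: if $\id_{\mathcal{C}}=dh$ for a degree $-1$ morphism $h$, comparing form-degree-$0$ components of $dh=A^{C^{\bullet}\prime\prime}h+hA^{C^{\bullet}\prime\prime}$ gives $w_{0}h^{0}+h^{0}w_{0}=\id_{C^{\bullet}}$, a smooth contracting homotopy. For the ``if'' half I would argue by perturbation. Exactness furnishes --- after choosing a Hermitian metric on $C^{\bullet}$, or complements of the subbundles $\ker w_{0}$ and $\mathrm{im}\, w_{0}$ --- a smooth bundle map $h_{0}$ of degree $-1$ and form degree $0$ with $w_{0}h_{0}+h_{0}w_{0}=\id_{C^{\bullet}}$; for instance $h_{0}=N^{-1}w_{0}^{*}$ with $N=w_{0}w_{0}^{*}+w_{0}^{*}w_{0}$ the (invertible) algebraic Laplacian. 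Viewing $h_{0}$ as a morphism in $B(X)$, set $R:=dh_{0}-\id_{\mathcal{C}}$. Then $R$ is a closed morphism of degree $0$ (hence even), since $dR=d(dh_{0})-d\,\id_{\mathcal{C}}=0$, and by the choice of $h_{0}$ its form-degree-$0$ part vanishes, so $R$ raises form degree by at least $1$. As $X$ has complex dimension $n$ and $\wedge^{\geq n+1}\overline{T^{*}X}=0$, we get $R^{n+1}=0$; hence $\id_{\mathcal{C}}+R=dh_{0}$ is invertible, with inverse the finite sum $\sum_{k=0}^{n}(-1)^{k}R^{k}$, again a closed degree $0$ morphism. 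Finally $h:=(\id_{\mathcal{C}}+R)^{-1}h_{0}$ is a degree $-1$ morphism, and the graded Leibniz rule together with $d((\id_{\mathcal{C}}+R)^{-1})=0$ yields $dh=(\id_{\mathcal{C}}+R)^{-1}\,dh_{0}=\id_{\mathcal{C}}$, so $\mathcal{C}\simeq 0$. Applying this to $\mathcal{C}=\text{Cone}(\phi)$ completes both directions.

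I expect the main obstacle to be bookkeeping rather than conceptual difficulty: tracking the various signs (the supercommutators, the factors $(-1)^{\deg(\cdot)}$ in the cone and shift, the graded Leibniz rule $d(\psi\phi)=(d\psi)\phi+(-1)^{\deg\psi}\psi(d\phi)$), and checking that $R$, $(\id_{\mathcal{C}}+R)^{-1}$ and $h$ are genuine $C^{\infty}(X)$-linear morphisms in $B(X)$ --- which they are, being polynomial in the morphisms $h_{0}$ and $dh_{0}$ --- and that the nilpotency $R^{n+1}=0$ is invoked for the correct bundle, namely $\wedge^{\bullet}\overline{T^{*}X}$ of top degree $n$, after the cone reduction. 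A secondary, expository point is to state precisely what ``quasi-isomorphism of cochain complexes'' means for complexes of $C^{\infty}$ vector bundles and to record the equivalence of the natural definitions of exactness.
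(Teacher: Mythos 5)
Your argument is correct, and it is essentially the standard proof: the paper itself only cites Block (Proposition 2.9) and Bismut--Shen--Wei, and the cited proof proceeds exactly as you do --- reduce to the mapping cone, observe that its form-degree-zero part is the cone of $\phi^0$, and show that a cohesive module with pointwise exact $v_0$-complex is contractible by perturbing a fiberwise contracting homotopy $h_0$, using that $dh_0-\id$ raises form degree and is therefore nilpotent. The only point deserving the care you already flag is the reading of ``quasi-isomorphism'' as fiberwise exactness of the cone, which for a bounded complex of $C^{\infty}$ bundles with $C^{\infty}(X)$-linear differential is equivalent to the existence of a smooth contracting homotopy.
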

\begin{proof}
See \cite[Proposition 2.9]{block2010duality} or \cite[Proposition 6.4.1]{bismut2021coherent}.
\end{proof}

\subsection{The DGLA $L_{\mathcal{E}}$}\label{subsection: DGLA LE}
Let $\mathcal{E}=(E^{\bullet}, A^{E^{\bullet}\prime\prime})\in B(X)$. For later applications we introduce the following differential graded Lie algebra (DGLA)
\begin{equation}\label{eq: dgla LE}
L_{\mathcal{E}}:=(C^{\infty}(X, \wedge^{\bullet}\overline{T^{*}X}\hat{\otimes} \End^{\bullet}(E^{\bullet})),[-,-], \diff_{\mathcal{E}}).
\end{equation}
The grading is given so that 
\begin{equation}
L^k_{\mathcal{E}}=\bigoplus_{i+j=k}C^{\infty}(X,\wedge^{i}\overline{T^{*}X}\hat{\otimes} \End^j(E^{\bullet})).
\end{equation} 
The bracket is the graded commutator: for $\alpha$, $\beta\in  L_{\mathcal{E}}$, we have
\begin{equation}\label{eq: supercommutator}
[\alpha,\beta]:=\alpha\beta-(-1)^{|\alpha||\beta|}\beta\alpha
\end{equation}
where $\alpha\beta$ and $\beta\alpha$ are the compositions as in \eqref{eq: composition of morphisms in B(X)}.
In more details, if we write
$$
\alpha=\omega\hat{\otimes} M\in C^{\infty}(X,\wedge^{i}\overline{T^{*}X}\hat\otimes \End^{k-i}(E^{\bullet}))
$$
and
$$
\beta=\mu\hat{\otimes} N\in C^{\infty}(X,\wedge^{j}\overline{T^{*}X}\hat\otimes \End^{l-j}(E^{\bullet})),
$$
 then we have
\begin{equation}\label{eq: supercommutator in component}
[\alpha,\beta]=(-1)^{(k-i)j}\omega\mu\hat{\otimes}[M,N].
\end{equation}

 The differential $\diff_{\mathcal{E}}$ is induced by $A^{E^{\bullet}\prime\prime}$ as in \eqref{eq: differential in B(X)}. In more details, let
$$
A^{E^{\bullet}\prime\prime}=v_0+\nabla^{E^{\bullet}\prime\prime}+v_2+\ldots
$$ 
be the decomposition  in \eqref{eq: decomposition of anti super conn}. Let $\phi\in L^k_{\mathcal{E}}$ with decomposition
\begin{equation}\label{eq: decompose of phi}
\phi=\phi_0+\phi_1+\ldots
\end{equation}
where $\phi_i\in C^{\infty}(X,\wedge^{i}\overline{T^{*}X}  \hat{\otimes}  \End^{k-i}( E^{\bullet}))$. By \eqref{eq: differential in B(X) degree l} and \eqref{eq: supercommutator}  we know
\begin{equation}\label{eq: diff E in LE}
(\diff_{\mathcal{E}}\phi)_l=\sum_{i\neq 1}[v_i, \phi_{l-i}]+\nabla^{E^{\bullet}\prime\prime}\phi_{l-1}-(-1)^k\phi_{l-1}\nabla^{E^{\bullet}\prime\prime}.
\end{equation}
As in the standard notation, we denote $\nabla^{E^{\bullet}\prime\prime}\phi_{l-1}-(-1)^k\phi_{l-1}\nabla^{E^{\bullet}\prime\prime}$ by $\nabla^{E^{\bullet}\prime\prime}(\phi_{l-1})$. Then \eqref{eq: diff E in LE} becomes
\begin{equation}\label{eq: diff E in LE shorter}
(\diff_{\mathcal{E}}\phi)_l=\nabla^{E^{\bullet}\prime\prime}(\phi_{l-1})+\sum_{i\neq 1}[v_i, \phi_{l-i}].
\end{equation}

By definition, for each $i$ we have 
\begin{equation}\label{eq: DGLA LE and morphism in B(X)}
\Hom_{\underline{B}(X)}(\mathcal{E},\mathcal{E}[i])=(\ker \diff_{\mathcal{E}}\cap L_{\mathcal{E}}^i)/(\text{Im }\diff_{\mathcal{E}}\cap L_{\mathcal{E}}^i).
\end{equation}

Recall (see \cite[Section 3.6]{keller2005deformation}) that an $L_{\infty}$ morphism $\Phi: \mathfrak{g}\to \mathfrak{h}$ between DGLAs consists of a system of linear maps
\begin{equation}
\Phi_n:\mathfrak{g}^{\otimes n}\to \mathfrak{h} \text{ for }n\geq 1
\end{equation}
such that $\deg \Phi_n=1-n$ and they satisfy the following conditions:
\begin{enumerate}
\item \begin{equation}\label{eq: L infty morphism antisymmetric}
\Phi_n(x_1,\ldots, x_{i+1},x_i,\ldots,x_n)=(-1)^{|x_i||x_{i+1}|}\Phi_n(x_1,\ldots,x_i,x_{i+1},\ldots,x_n);
\end{equation}
\item $\Phi_1\circ d_{\mathfrak{g}}=d_{\mathfrak{h}}\circ \Phi_1;$
\item for each $n\geq 2$
\begin{equation}\label{eq: L infty morphism}
\begin{split}
&\sum_{i<j}\pm \Phi_{n-1}([x_i,x_j], x_1,\ldots \widehat{x_i},\ldots,\widehat{x_j},\ldots, x_n)=\\
&\sum_{1\leq p\leq n-1, ~\sigma\in S_n}\pm\frac{1}{2}[\Phi_p(x_{\sigma(1)},\ldots, x_{\sigma(p)}),\Phi_q(x_{\sigma(p+1)},\ldots, x_{\sigma(n)})]\\
&+ d_{\mathfrak{h}}\Phi_n(x_1,\ldots,x_n)+\sum_{1\leq k\leq n}\pm \Phi_n(x_1,\ldots, d_{\mathfrak{g}}(x_k),\ldots, x_n).
\end{split}
\end{equation}
We refer to  \cite[Section 3.6]{keller2005deformation} for details about the $\pm$ signs in \eqref{eq: L infty morphism}.
\end{enumerate}
Moreover, $\Phi$ is called an $L_{\infty}$ quasi-isomorphism if $\Phi_1: (\mathfrak{g},d_{\mathfrak{g}})\to  (\mathfrak{h},d_{\mathfrak{h}})$ is a quasi-isomomrphism.

\begin{lemma}\label{lemma: homotopy equivalent L infty equivalent of dgla}
If two cohesive modules $\mathcal{E}$ and $\mathcal{F}$ are homotopic equivalent, then the corresponding DGLAs $L_{\mathcal{E}}$ and $L_{\mathcal{F}}$ are $L_{\infty}$ quasi-isomorphic.
\end{lemma}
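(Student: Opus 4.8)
The plan is to present $L_{\mathcal{E}}$ and $L_{\mathcal{F}}$ as the commutator DGLAs of two differential graded associative algebras linked by an $A_{\infty}$-quasi-isomorphism, and then to transport that morphism through the antisymmetrization functor from $A_{\infty}$-algebras to $L_{\infty}$-algebras. First I would record the dg-algebra picture underlying $L_{\mathcal{E}}$. Since $\mathcal{E}$ is cohesive we have $A^{E^{\bullet}\prime\prime}\circ A^{E^{\bullet}\prime\prime}=0$, and a short computation with \eqref{eq: composition of morphisms in B(X)} and \eqref{eq: differential in B(X)} shows that $A_{\mathcal{E}}:=\bigl(C^{\infty}(X,\wedge^{\bullet}\overline{T^{*}X}\hat{\otimes}\End^{\bullet}(E^{\bullet})),\ \text{composition},\ \diff_{\mathcal{E}}\bigr)$ is a differential graded associative algebra --- both the Leibniz rule for $\diff_{\mathcal{E}}$ and $\diff_{\mathcal{E}}^{2}=0$ are consequences of $A^{E^{\bullet}\prime\prime}\circ A^{E^{\bullet}\prime\prime}=0$ --- and that the $L_{\mathcal{E}}$ of \eqref{eq: dgla LE} is exactly its commutator DGLA; the same holds for $\mathcal{F}$. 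Because $\phi$ is a homotopy equivalence it is invertible in $\underline{B}(X)$, so I would fix a closed degree-$0$ homotopy inverse $\psi\colon\mathcal{F}\to\mathcal{E}$ in $B(X)$ together with an element $H\in L_{\mathcal{E}}^{-1}$ satisfying $\psi\phi-\id_{\mathcal{E}}=\diff_{\mathcal{E}}(H)$.

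Next I would write down the candidate $A_{\infty}$-morphism $f\colon A_{\mathcal{E}}\to A_{\mathcal{F}}$ with linear part $f_{1}(a)=\phi a\psi$ and, for $n\ge 2$,
\[
f_{n}(a_{1},\ldots,a_{n})=\pm\,\phi\,a_{1}\,H\,a_{2}\,H\cdots H\,a_{n}\,\psi ,
\]
the signs being those imposed by the Koszul rule. The verification that the $f_{n}$ satisfy the $A_{\infty}$-morphism relations uses only that $\phi$ and $\psi$ are closed, the associativity of composition, and $\diff_{\mathcal{E}}(H)=\psi\phi-\id_{\mathcal{E}}$: whenever the differential hits an internal factor $H$ it produces $\phi\cdots(\psi\phi)\cdots\psi-\phi\cdots\id\cdots\psi$, and these two families of terms are precisely the bilinear terms of the form $f_{i}(\ldots)\cdot f_{j}(\ldots)$ and the term $f_{n-1}(\ldots,a_{k}a_{k+1},\ldots)$ that appear on the other side of the relation, while differentiating an $a_{i}$ reproduces $f_{n}(\ldots,\diff_{\mathcal{E}}a_{i},\ldots)$. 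It is worth noting that, unlike in the usual perturbation lemma, no side conditions on the pair $(\psi,H)$ are needed here. Finally, $f_{1}$ is a quasi-isomorphism of complexes: by \eqref{eq: DGLA LE and morphism in B(X)} it induces on cohomology the conjugation map $\Hom_{\underline{B}(X)}(\mathcal{E},\mathcal{E}[\ast])\to\Hom_{\underline{B}(X)}(\mathcal{F},\mathcal{F}[\ast])$, $[a]\mapsto[\phi]\,[a]\,[\psi]$, which is bijective because $[\phi]$ is invertible in $\underline{B}(X)$ with inverse $[\psi]$. Hence $f$ is an $A_{\infty}$-quasi-isomorphism.

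To finish, I would apply the antisymmetrization functor from $A_{\infty}$-algebras and their morphisms to $L_{\infty}$-algebras and their morphisms (see \cite{keller2005deformation}): it sends a differential graded algebra to its commutator DGLA and an $A_{\infty}$-morphism $f$ to an $L_{\infty}$-morphism $\Phi$ whose components $\Phi_{n}$ are the graded (anti)symmetrizations of the $f_{n}$ in the conventions of \eqref{eq: L infty morphism antisymmetric}--\eqref{eq: L infty morphism}, and in particular $\Phi_{1}=f_{1}$. Applied to the $f$ constructed above this yields an $L_{\infty}$-morphism $\Phi\colon L_{\mathcal{E}}\to L_{\mathcal{F}}$ whose linear term $f_{1}$ is a quasi-isomorphism, i.e. an $L_{\infty}$-quasi-isomorphism, which is the assertion.

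I expect the only genuinely delicate step to be the sign-accurate verification of the $A_{\infty}$-morphism identities for the family $\{f_{n}\}$; the dg-algebra structure on $A_{\mathcal{E}}$, the conjugation argument showing $f_{1}$ is a quasi-isomorphism, and the passage through the antisymmetrization functor are all formal. A reader who prefers to avoid $A_{\infty}$-language can instead define $\Phi_{n}(a_{1},\ldots,a_{n})=\sum_{\sigma\in S_{n}}\pm\,\phi\,a_{\sigma(1)}\,H\,a_{\sigma(2)}\,H\cdots H\,a_{\sigma(n)}\,\psi$ directly and check \eqref{eq: L infty morphism antisymmetric}--\eqref{eq: L infty morphism} by hand; this is the same computation rearranged, and it again requires no side conditions on $(\psi,H)$.
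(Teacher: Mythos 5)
Your proposal is correct and is essentially the paper's own argument: the paper likewise fixes a homotopy inverse $\psi$ and a homotopy operator $h$ with $\psi\phi-\id=A''h+hA''$ and defines $\Phi_1(x)=\psi x\phi$ and $\Phi_n(x_1,\ldots,x_n)=\sum_{\sigma\in S_n}\pm\,\psi\, x_{\sigma(1)}h x_{\sigma(2)}\cdots h x_{\sigma(n)}\,\phi$, which is exactly your symmetrized formula (your choice of direction $L_{\mathcal{E}}\to L_{\mathcal{F}}$ rather than $L_{\mathcal{F}}\to L_{\mathcal{E}}$ is immaterial since the conclusion is symmetric). The only difference is organizational: you check the $A_{\infty}$-morphism identities on the underlying dg algebras and then antisymmetrize, whereas the paper writes down the symmetrized $\Phi_n$ directly and leaves the $L_{\infty}$ verification to the reader --- the same computation, with your packaging handling the signs more cleanly.
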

\begin{proof}
Let $\phi: \mathcal{F}\to \mathcal{E}$ and $\psi: \mathcal{E}\to \mathcal{F}$ be a pair of homotopy equivalence together with a homotopy operator $h\in \Hom^{-1}_{B(X)}(\mathcal{F},\mathcal{F})$ such that
\begin{equation}
\psi\phi-\id_{F^{\bullet}}=A^{F^{\bullet}\prime\prime}h+hA^{F^{\bullet}\prime\prime}.
\end{equation}
Then we define an $L_{\infty}$ morphism $\Phi: L_{\mathcal{F}}\to L_{\mathcal{E}}$ by
\begin{equation}
\Phi_1(x)=\psi x\phi
\end{equation}
and for $n\geq 2$
\begin{equation}\label{eq: L infty morphism for cohesive module equivalence}
\Phi_n(x_1,\ldots, x_n)=\sum_{\sigma\in S_n}\text{sgn}(\sigma, x_1,\ldots,x_n)\psi x_{\sigma(1)}hx_{\sigma(2)}\ldots h x_{\sigma(n)} \phi
\end{equation}
where $\text{sgn}(\sigma, x_1,\ldots,x_n)=\pm 1$ is chosen such that $\text{sgn}(\id, x_1,\ldots,x_n)=1$ and $\Phi_n$ satisfies \eqref{eq: L infty morphism antisymmetric}.
It is not difficult to check that $\Phi_1$ is a quasi-isomorphism and the $\Phi_n$'s satisfy \eqref{eq: L infty morphism}.
\end{proof}

\subsection{Coherent sheaves and an equivalent of categories}\label{subsection: cohesive and coherent}
 Cohesive modules are closely related to coherent sheaves on $X$. Let $\mathcal{O}_X$ be the sheaf of holomorphic functions. We call a sheaf of $\mathcal{O}_X$-modules $\mathfrak{F}$ \emph{coherent} if it satisfies the following two conditions
\begin{enumerate}
\item $\mathfrak{F}$ is of finite type over $\mathcal{O}_X$, that is, every point in $X$ has an open neighborhood $U$ in $X$ such that there is a surjective morphism $\mathcal {O}_{X}^{n}|_{U}\twoheadrightarrow \mathcal {F}|_{U}$ for some natural number $n$;
\item for \emph{any} open set $U\subseteq X$, \emph{any} natural number $n$, and \emph{any} morphism $\varphi :\mathcal {O}_{X}^{n}|_{U}\to \mathcal {F}|_{U}$ of $\mathcal {O}_{X}$-modules, the kernel of $\varphi$  is of finite type.
\end{enumerate}

Let $D^b_{\coh}(X)$ be the derived category of bounded complexes of $\mathcal{O}_X$-modules with coherent cohomologies.

\begin{thm}\label{thm: equiv of cats}[\cite[Theorem 4.3]{block2010duality}, \cite[Theorem 6.5.1]{bismut2021coherent}]
If $X$ is a compact complex manifold, then  there exists an equivalence $\underline{F}_X: \underline{B}(X)\overset{\sim}{\to}D^b_{\coh}(X)$ as triangulated categories. Here $\underline{B}(X)$ is the homotopy category of $B(X)$.
\end{thm}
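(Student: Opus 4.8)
The plan is to follow the original argument of Block \cite{block2010duality} (see also \cite{bismut2021coherent}): build an explicit functor $\underline{F}_X$, descend it to homotopy categories, check that it is exact, prove full faithfulness by a local Dolbeault comparison, and prove essential surjectivity by patching local free resolutions along a finite good cover. For the functor, to a cohesive module $\mathcal{E}=(E^{\bullet},A^{E^{\bullet}\prime\prime})$ I would associate the bounded complex of sheaves $\mathcal{D}(\mathcal{E})$ given on an open set $U$ by $C^{\infty}(U,\wedge^{\bullet}\overline{T^{*}X}\hat{\otimes}E^{\bullet})$, graded by total degree and equipped with the differential $A^{E^{\bullet}\prime\prime}$. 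Since the $v_{i}$ are $C^{\infty}(X)$-linear and the degree-one term $\nabla^{E^{\bullet}\prime\prime}$ is a $\dbar$-connection, the differential commutes with multiplication by holomorphic functions, so $\mathcal{D}(\mathcal{E})$ is a bounded complex of sheaves of $\mathcal{O}_X$-modules with fine terms. A degree-$0$ closed morphism of cohesive modules induces a morphism of such complexes and chain homotopies go to homotopies, so the construction descends to $\underline{B}(X)$ (and by Proposition \ref{prop: homotopy equiv degree 0} the morphisms that become invertible there are precisely those with quasi-isomorphic degree-$0$ components); reading off the block-triangular formula for the superconnection of a mapping cone shows that $\mathcal{D}$ carries cones to cones and commutes with the shift. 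Localizing at quasi-isomorphisms then produces an exact functor $\underline{F}_X:\underline{B}(X)\to D^{b}(\mathcal{O}_X\text{-mod})$.

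The next step is to show $\underline{F}_X$ takes values in $D^b_{\coh}(X)$ and is fully faithful; both reduce to a single local computation. Working on a polydisc $\Delta^{n}$, I would combine the $\dbar$-Poincar\'e lemma with a homological-perturbation argument that cancels the higher terms $v_{2},v_{3},\dots$ of the superconnection against a contracting homotopy for the Dolbeault resolution, so as to exhibit $\mathcal{D}(\mathcal{E})|_{\Delta^{n}}$ as quasi-isomorphic, as a complex of $\mathcal{O}_{\Delta^{n}}$-modules, to a bounded complex of finite-rank free modules; the cohomology sheaves $\mathcal{H}^{k}(\mathcal{D}(\mathcal{E}))$ are then coherent by Oka's coherence theorem, and boundedness is clear, so $\underline{F}_X(\mathcal{E})\in D^b_{\coh}(X)$. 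Applying the same perturbation argument to $\Hom^{\bullet}(E^{\bullet},F^{\bullet})$ in place of $E^{\bullet}$ identifies the complex of fine sheaves $U\mapsto C^{\infty}(U,\wedge^{\bullet}\overline{T^{*}X}\hat{\otimes}\Hom^{\bullet}(E^{\bullet},F^{\bullet}))$ carrying the differential \eqref{eq: differential in B(X)} with $R\mathcal{H}om_{\mathcal{O}_X}(\underline{F}_X\mathcal{E},\underline{F}_X\mathcal{F})$; since fine sheaves are acyclic for $\Gamma(X,-)$, taking global sections computes hypercohomology, and comparing with the description \eqref{eq: DGLA LE and morphism in B(X)} of morphisms in $\underline{B}(X)$ gives
\[
\Hom_{\underline{B}(X)}(\mathcal{E},\mathcal{F}[k])\;\cong\;\mathbb{H}^{k}\big(X,\,R\mathcal{H}om_{\mathcal{O}_X}(\underline{F}_X\mathcal{E},\underline{F}_X\mathcal{F})\big)\;=\;\Hom_{D^b_{\coh}(X)}(\underline{F}_X\mathcal{E},\underline{F}_X\mathcal{F}[k]),
\]
which is full faithfulness.

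For essential surjectivity I would use that $\underline{F}_X$ is exact and fully faithful, and that $D^b_{\coh}(X)$ is generated as a triangulated category by coherent sheaves placed in degree $0$ (take canonical truncations), so that it suffices to realize each coherent sheaf $\mathcal{S}$ as some $\underline{F}_X(\mathcal{E})$. Covering $X$ by finitely many polydiscs on each of which $\mathcal{S}$ admits a bounded finite free resolution, noting that on overlaps these resolutions are chain homotopy equivalent, and assembling them together with the $\dbar$-operator in the \v{C}ech direction --- the twisting-cochain construction of Toledo and Tong --- produces a cohesive module $\mathcal{E}$ for which $\mathcal{D}(\mathcal{E})$ is a resolution of $\mathcal{S}$, hence $\underline{F}_X(\mathcal{E})\cong\mathcal{S}$. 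Compatibility with the triangulated structures has already been built into the construction, since $\mathcal{D}$ respects cones and shifts.

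The main obstacle is the local comparison underlying both the coherence and the full-faithfulness steps: making the homological-perturbation reduction of a flat $\dbar$-superconnection on a polydisc to an honest bounded complex of free $\mathcal{O}$-modules fully rigorous, and checking that the resulting quasi-isomorphism is natural enough to sheafify and to globalize. Once that analytic input is in hand, the remaining ingredients --- exactness of $\underline{F}_X$, the formal fact that a fully faithful exact functor whose essential image generates is an equivalence onto $D^b_{\coh}(X)$, and the \v{C}ech--Dolbeault patching --- are either formal or standard.
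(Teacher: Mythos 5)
Your outline is essentially the cited proof: the paper does not argue this theorem itself but refers to \cite[Theorem 4.3]{block2010duality} and \cite[Theorem 6.5.1]{bismut2021coherent}, and your construction (the Dolbeault-type sheaf complex as the functor, local perturbation for coherence and full faithfulness, Toledo--Tong twisting cochains for essential surjectivity) follows Block's original argument. The technical points you flag as obstacles --- the local reduction of a flat $\dbar$-superconnection to a finite free complex and the \v{C}ech-to-Dolbeault globalization --- are precisely the substantive steps carried out in those references, so your sketch is consistent with, and deferred to, the same source the paper relies on.
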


In \cite{chuang2021maurer} the result of Theorem \ref{thm: equiv of cats} is generalized to noncompact complex manifold. Recall that a coherent sheaf $\mathfrak{F}$ is called \emph{globally bounded} if  there exists an open covering $U_i$ of $X$ and  integers $a<b$ and $N>0$ such that on each $U_i$ there exists a bounded complex of finitely generated locally free $\mathcal{O}_X$-modules $\mathcal{S}^{\bullet}_i$ which is concentrated in degrees $[a, b]$ and each $\mathcal{S}^j_i$ has rank $\leq N$, together with  a quasi-isomorphism $\mathcal{S}^{\bullet}_i\to \mathfrak{F}^{\bullet}|_{U_i}$.

Let $D^{\gb}_{\coh}(X)$ be the full subcategory of $D^b_{\coh}(X)$ whose objects are bounded complexes of $\mathcal{O}_X$-modules with globally bounded coherent cohomologies. When $X$ is compact, it is clear that $D^{\gb}_{\coh}(X)$ coincides with $D^b_{\coh}(X)$.

\begin{thm}\label{thm: equiv of cats noncompact}[\cite[Theorem 8.3]{chuang2021maurer}]
If $X$ is a  complex manifold, then there exists an equivalence  $\underline{F}_X: \underline{B}(X)\overset{\sim}{\to}D^{\gb}_{\coh}(X)$ as triangulated categories.
\end{thm}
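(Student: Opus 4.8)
The statement is the Chuang--Holstein--Wei generalization of Block's equivalence, and the plan is to follow the structure of the proof of Theorem \ref{thm: equiv of cats}, indicating where noncompactness forces the passage from $D^b_{\coh}(X)$ to $D^{\gb}_{\coh}(X)$; the full argument is carried out in \cite[Theorem 8.3]{chuang2021maurer}.

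First I would construct the functor $\underline{F}_X$. To a cohesive module $\mathcal{E}=(E^{\bullet},A^{E^{\bullet}\prime\prime})$ one associates the total complex of sheaves $\mathcal{G}^{\bullet}(\mathcal{E})$ with $\mathcal{G}^p(U)=\bigoplus_{i+j=p}\Omega^{0,i}(U,E^j)$ and differential $A^{E^{\bullet}\prime\prime}$. Since $A^{E^{\bullet}\prime\prime}$ squares to zero, and since the $v_i$ are $C^{\infty}(X)$-linear while $\nabla^{E^{\bullet}\prime\prime}$ obeys the $\dbar$-Leibniz rule, $A^{E^{\bullet}\prime\prime}$ commutes with multiplication by holomorphic functions, so $\mathcal{G}^{\bullet}(\mathcal{E})$ is a bounded complex of $\mathcal{O}_X$-modules; a closed degree-zero morphism of cohesive modules induces a morphism of such complexes, and a chain homotopy in $B(X)$ induces a chain homotopy of sheaf complexes, so we obtain $\underline{F}_X\colon \underline{B}(X)\to D^b(\mathcal{O}_X\text{-mod})$. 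That it lands in $D^{\gb}_{\coh}(X)$ requires checking that the cohomology sheaves of $\mathcal{G}^{\bullet}(\mathcal{E})$ are coherent and globally bounded: coherence is local and follows, as in \cite{block2010duality}, from the local structure of $\dbar$-superconnections, where near a point one applies the Koszul--Malgrange theorem to the leading term $v_0$ and homological perturbation to the higher $v_i$ to produce a finite free resolution; the fixed rank and degree range of $E^{\bullet}$ bound the sizes of these local resolutions uniformly over $X$, which is exactly the globally-bounded condition.

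Next I would verify that $\underline{F}_X$ is exact: the explicit formulas for the mapping-cone superconnection and the shift in Section \ref{subsection: cohesive modules} show that $\mathcal{G}^{\bullet}$ sends cones to cones and shifts to shifts up to quasi-isomorphism, so distinguished triangles are preserved. For full faithfulness I would use that the complexes $\mathcal{G}^{\bullet}(\mathcal{E})$ consist of fine, hence $\Gamma$-acyclic, sheaves, and that the sheafified morphism complex $U\mapsto \Hom^{\bullet}_{B(U)}(\mathcal{E}|_U,\mathcal{F}|_U)$ — again a complex of fine sheaves, being the complex of $\mathcal{O}_X$-linear morphisms of the Dolbeault resolutions — locally computes $\mathbf{R}\underline{\Hom}_{\mathcal{O}_X}(\underline{F}_X\mathcal{E},\underline{F}_X\mathcal{F})$; applying the exact global-sections functor then identifies $\Hom_{D^b_{\coh}(X)}(\underline{F}_X\mathcal{E},\underline{F}_X\mathcal{F})$ with $H^0$ of $\Hom^{\bullet}_{B(X)}(\mathcal{E},\mathcal{F})$, which by \eqref{eq: DGLA LE and morphism in B(X)} is $\Hom_{\underline{B}(X)}(\mathcal{E},\mathcal{F})$. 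Noncompactness enters here only through paracompactness, which is all that fineness needs.

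The main obstacle is essential surjectivity, where the globally-bounded hypothesis is indispensable. Given $\mathfrak{F}^{\bullet}\in D^{\gb}_{\coh}(X)$, choose an open cover $\{U_i\}$ and finite locally free resolutions $\mathcal{S}^{\bullet}_i\overset{\sim}{\to}\mathfrak{F}^{\bullet}|_{U_i}$ with degrees in a fixed interval $[a,b]$ and ranks $\leq N$; after refining to a locally finite cover of bounded multiplicity (possible since a manifold has finite covering dimension) and passing to underlying $C^{\infty}$ bundles, use a partition of unity together with chain homotopies between $\mathcal{S}^{\bullet}_i$ and $\mathcal{S}^{\bullet}_j$ on overlaps to assemble a \v{C}ech--Dolbeault twisted complex, then contract it by homological perturbation onto a single bounded, finite-rank $\mathbb{Z}$-graded $C^{\infty}$ bundle $E^{\bullet}$ with a flat $\dbar$-superconnection $A^{E^{\bullet}\prime\prime}$ such that $\underline{F}_X(E^{\bullet},A^{E^{\bullet}\prime\prime})\simeq \mathfrak{F}^{\bullet}$. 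The finiteness of the rank of $E^{\bullet}$ is precisely what the uniform bounds $N$ and $[a,b]$, together with bounded covering multiplicity, ensure; dropping global boundedness would produce infinite-rank bundles on noncompact $X$. Arranging this gluing-and-contraction so that the output is an honest object of $B(X)$ is the technical core of \cite[Theorem 8.3]{chuang2021maurer}.
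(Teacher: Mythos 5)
The paper offers no proof of this statement: it is imported verbatim from \cite[Theorem 8.3]{chuang2021maurer}, so there is nothing internal to compare your argument against. That said, your outline is a faithful sketch of the strategy in the literature: the functor is the Dolbeault-type complex of sheaves $U\mapsto(\Omega^{0,\bullet}(U,E^{\bullet}),A^{E^{\bullet}\prime\prime})$, coherence of its cohomology is local and reduces to Koszul--Malgrange plus perturbation, full faithfulness uses that these sheaves are fine (hence acyclic) so the morphism complex computes derived Hom, and essential surjectivity is the hard step where global boundedness together with a cover of bounded multiplicity is exactly what keeps the glued bundle of finite rank. Two caveats are worth recording. First, \cite[Theorem 8.3]{chuang2021maurer} actually establishes the equivalence with the derived category of globally bounded \emph{perfect} complexes rather than with $D^{\gb}_{\coh}(X)$ directly; the paper's own remark following the theorem points out that one still needs the (easy, for nonsingular $X$) identification of these two categories, a step your sketch omits. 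Second, the gluing in \cite{chuang2021maurer} is not carried out by a direct \v{C}ech--Dolbeault partition-of-unity assembly but through a Maurer--Cartan/twisted-complex formalism and homotopy descent; your partition-of-unity picture is the right heuristic, but the claim that the contraction lands on an honest finite-rank flat $\dbar$-superconnection is, as you yourself note, the technical core of the cited proof and cannot be waved through.
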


\begin{rmk}
In \cite[Theorem 8.3]{chuang2021maurer}, the result is stated for the derived category of globally bounded perfect complexes instead of $D^{\gb}_{\coh}(X)$. Nevertheless it is easy to see that these two categories are equivalent for nonsingular $X$.
\end{rmk}

For later applications we want to explicitly state the  following results, which are implied in Theorem \ref{thm: equiv of cats} and \ref{thm: equiv of cats noncompact}.

\begin{coro}\label{coro: quasi-isom is homotopy equivalence}
Any quasi-isomorphism in $D^{\gb}_{\coh}(X)$ is induced by a homotopy equivalence in $B(X)$.
\end{coro}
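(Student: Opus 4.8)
The plan is to deduce this from the equivalence of triangulated categories $\underline{F}_X \colon \underline{B}(X) \overset{\sim}{\to} D^{\gb}_{\coh}(X)$ of Theorem \ref{thm: equiv of cats noncompact} together with Proposition \ref{prop: homotopy equiv degree 0}, which characterizes homotopy equivalences in $B(X)$ as those degree-$0$ closed morphisms whose degree-$0$ component is a quasi-isomorphism of complexes of $C^\infty$-bundles. First I would observe that a ``quasi-isomorphism in $D^{\gb}_{\coh}(X)$'' should be read as an \emph{isomorphism} in that derived category (a quasi-isomorphism of complexes becomes an isomorphism after localization), so the statement to prove is: given cohesive modules $\mathcal{E}, \mathcal{F} \in B(X)$ and an isomorphism $f \colon \underline{F}_X(\mathcal{E}) \to \underline{F}_X(\mathcal{F})$ in $D^{\gb}_{\coh}(X)$, there is a homotopy equivalence $\phi \colon \mathcal{E} \to \mathcal{F}$ in $B(X)$ with $\underline{F}_X(\phi) = f$ (or at least a homotopy equivalence between $\mathcal E$ and $\mathcal F$; the precise bookkeeping of which morphism one wants to lift is the only subtlety).

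The key steps, in order, are as follows. Since $\underline{F}_X$ is an equivalence of categories it is in particular full, so there is a morphism $\bar\phi \in \Hom_{\underline{B}(X)}(\mathcal{E},\mathcal{F})$ with $\underline{F}_X(\bar\phi) = f$. Lift $\bar\phi$ to an actual degree-$0$ closed morphism $\phi \colon \mathcal{E} \to \mathcal{F}$ in the dg-category $B(X)$ representing the homotopy class $\bar\phi$. Because $\underline{F}_X$ is faithful and $f$ is an isomorphism, $\bar\phi$ is an isomorphism in $\underline{B}(X)$, i.e.\ $\phi$ is a homotopy equivalence in the sense of Definition \ref{defi: homotopy equivalence}. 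This already proves the corollary, but to make the conclusion usable one would also want to record — invoking Proposition \ref{prop: homotopy equiv degree 0} — that the degree-$0$ component $\phi^0 \colon (E^\bullet, v_0) \to (F^\bullet, u_0)$ is then a genuine quasi-isomorphism of cochain complexes of $C^\infty$-vector bundles, which is the concrete form in which ``homotopy equivalence'' will be used later in the paper.

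The main obstacle, such as it is, is not a deep one: it is entirely a matter of making precise the slightly informal phrase ``quasi-isomorphism in $D^{\gb}_{\coh}(X)$'' and ensuring the lifting respects the compatibility with $\underline{F}_X$. One must check that the functor $\underline{F}_X$ from Theorem \ref{thm: equiv of cats noncompact} is compatible with the pre-triangulated structure on $B(X)$ described above (mapping cones and shifts), so that ``isomorphism in $D^{\gb}_{\coh}(X)$'' transports to ``isomorphism in $\underline{B}(X)$'' and back; this is part of what it means for $\underline{F}_X$ to be an equivalence of triangulated categories, so it may be simply cited. No analytic input is needed here — the corollary is a formal consequence of the earlier equivalence theorem and Proposition \ref{prop: homotopy equiv degree 0} — so the proof should be short, essentially the two-sentence argument above, with perhaps a remark clarifying the terminology.
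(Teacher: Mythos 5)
Your proposal is correct and matches the paper's intent: the paper offers no separate argument, stating only that the corollary is ``implied in'' Theorems \ref{thm: equiv of cats} and \ref{thm: equiv of cats noncompact}, and your fullness/faithfulness argument via the triangulated equivalence $\underline{F}_X$ is exactly the formal deduction being invoked. Your added clarification that ``quasi-isomorphism'' means ``isomorphism in the derived category'' and the pointer to Proposition \ref{prop: homotopy equiv degree 0} are harmless refinements, not a different route.
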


\begin{coro}\label{coro: Extension and morphism in B(X)}
For $\mathcal{S}\in D^{\gb}_{\coh}(X)$ and  $\mathcal{E}\in B(X)$ such that $\underline{F}_X(\mathcal{E})\simeq \mathcal{S}$, we have
\begin{equation}
\Hom_{D^{\gb}_{\coh}(X)}(\mathcal{S},\mathcal{S}[i])\cong \Hom_{\underline{B}(X)}(\mathcal{E},\mathcal{E}[i])\text{, for any } i.
\end{equation}
In particular if $\mathcal{S}$ is a single globally bounded coherent sheaf, then
\begin{equation}
\Ext^i_X(\mathcal{S},\mathcal{S})\cong \Hom_{\underline{B}(X)}(\mathcal{E},\mathcal{E}[i])\text{, for any } i\geq 0.
\end{equation}
\end{coro}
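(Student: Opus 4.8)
The plan is to deduce both statements of Corollary~\ref{coro: Extension and morphism in B(X)} directly from the equivalence of triangulated categories $\underline{F}_X:\underline{B}(X)\xrightarrow{\sim}D^{\gb}_{\coh}(X)$ provided by Theorem~\ref{thm: equiv of cats noncompact} (or Theorem~\ref{thm: equiv of cats} in the compact case). First I would recall that an equivalence of categories is fully faithful, so for any $\mathcal{E},\mathcal{F}\in\underline{B}(X)$ it induces a bijection $\Hom_{\underline{B}(X)}(\mathcal{E},\mathcal{F})\cong\Hom_{D^{\gb}_{\coh}(X)}(\underline{F}_X\mathcal{E},\underline{F}_X\mathcal{F})$. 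Since $\underline{F}_X$ is a functor of triangulated categories it commutes with the shift functor, so $\underline{F}_X(\mathcal{E}[i])\cong(\underline{F}_X\mathcal{E})[i]$; applying full faithfulness to $\mathcal{F}=\mathcal{E}[i]$ and using $\underline{F}_X(\mathcal{E})\simeq\mathcal{S}$ gives
\begin{equation}
\Hom_{\underline{B}(X)}(\mathcal{E},\mathcal{E}[i])\cong\Hom_{D^{\gb}_{\coh}(X)}(\underline{F}_X\mathcal{E},\underline{F}_X\mathcal{E}[i])\cong\Hom_{D^{\gb}_{\coh}(X)}(\mathcal{S},\mathcal{S}[i]),
\end{equation}
where the last isomorphism uses that a chosen isomorphism $\underline{F}_X(\mathcal{E})\simeq\mathcal{S}$ in $D^{\gb}_{\coh}(X)$ induces a bijection on $\Hom$-sets by conjugation. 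This proves the first displayed equation.

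For the second statement, I would specialize to the case where $\mathcal{S}$ is a single globally bounded coherent sheaf, viewed as a complex concentrated in degree $0$. The point is simply to identify $\Hom_{D^{\gb}_{\coh}(X)}(\mathcal{S},\mathcal{S}[i])$ with $\Ext^i_X(\mathcal{S},\mathcal{S})$. This is the standard fact that for objects $\mathcal{S}$ of an abelian category $\mathcal{A}$ (here the category of $\mathcal{O}_X$-modules, or sheaves of $\mathcal{O}_X$-modules with coherent cohomology), $\Hom_{D^b(\mathcal{A})}(\mathcal{S},\mathcal{S}[i])\cong\Ext^i_{\mathcal{A}}(\mathcal{S},\mathcal{S})$ for all $i\geq 0$, and both vanish for $i<0$; combined with the first part this yields $\Ext^i_X(\mathcal{S},\mathcal{S})\cong\Hom_{\underline{B}(X)}(\mathcal{E},\mathcal{E}[i])$ for $i\geq 0$. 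One should note that the relevant $\Ext$ groups computed in $D^{\gb}_{\coh}(X)$ agree with those computed in the full derived category $D^b_{\coh}(X)$, since $D^{\gb}_{\coh}(X)$ is a full subcategory; alternatively, when $X$ is compact the two categories coincide.

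The main (and really only) obstacle is bookkeeping rather than substance: one must be careful that $\underline{F}_X$ genuinely commutes with shifts (which follows from its being an exact functor of triangulated categories, as asserted in Theorems~\ref{thm: equiv of cats} and~\ref{thm: equiv of cats noncompact}), and that the identification of derived-category $\Hom$'s into shifts with $\Ext$-groups is the one intended. No hard analysis or geometry enters; everything is formal once the equivalence of Theorem~\ref{thm: equiv of cats noncompact} is granted. I would therefore keep the proof short, essentially citing full faithfulness of $\underline{F}_X$, its compatibility with shifts, and the standard description of $\Ext$ via morphisms in the derived category.
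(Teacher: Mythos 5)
Your proposal is correct and matches the paper's intent: the paper gives no separate argument for this corollary, stating only that it is implied by the equivalences $\underline{F}_X$ of Theorems \ref{thm: equiv of cats} and \ref{thm: equiv of cats noncompact}, which is exactly what you spell out via full faithfulness, compatibility with shifts, and the standard identification $\Hom_{D^b}(\mathcal{S},\mathcal{S}[i])\cong\Ext^i_X(\mathcal{S},\mathcal{S})$.
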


\subsection{Pull-backs of cohesive modules}\label{subsection: pull backs}
Let $f: X\to Y$ be a holomorphic map between complex manifolds.

\begin{lemma}\label{lemma: pull-back of coherent is coherent}
 Let  $\mathcal{E}$ be a bounded complexes of $\mathcal{O}_Y$-modules with globally bounded coherent cohomologies. Then
\begin{equation}
f^*\mathcal{E}:=f^{-1}\mathcal{E}\otimes_{f^{-1}\mathcal{O}_Y}\mathcal{O}_X
\end{equation}
is a bounded complexes of $\mathcal{O}_X$-modules with globally bounded coherent cohomologies. 
\end{lemma}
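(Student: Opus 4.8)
The plan is to prove Lemma~\ref{lemma: pull-back of coherent is coherent} in two stages, matching the two claims in the statement: that $f^*\mathcal{E}$ has coherent cohomology, and that this cohomology is moreover globally bounded.

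\textbf{Coherence.} First I would recall that for a complex manifold $X$ the structure sheaf $\mathcal{O}_X$ is itself a coherent sheaf of rings (the Oka coherence theorem), so that on $X$ the notion of coherent $\mathcal{O}_X$-module is stable under kernels, cokernels, and extensions, and a bounded complex has coherent cohomology if and only if it is quasi-isomorphic to a bounded complex of coherent modules. It therefore suffices to replace $\mathcal{E}$, up to quasi-isomorphism, by a bounded-below complex of suitable flat (or locally free) $f^{-1}\mathcal{O}_Y$-modules so that $f^*\mathcal{E} = f^{-1}\mathcal{E}\otimes_{f^{-1}\mathcal{O}_Y}\mathcal{O}_X$ computes the derived pullback $Lf^*\mathcal{E}$. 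Then I would argue locally on $X$: over a point of $X$ mapping to a point of $Y$, I can choose a neighborhood on which $\mathcal{E}$ (or its coherent replacement) is presented by a bounded complex of finite free $\mathcal{O}_Y$-modules — this is exactly the local finite free resolution guaranteed by the globally bounded hypothesis, and indeed local finite free resolutions of coherent modules exist on any complex manifold by Oka coherence together with the syzygy/Hilbert-type argument. Pulling back a bounded complex of finite free modules gives a bounded complex of finite free $\mathcal{O}_X$-modules, whose cohomology is automatically coherent on $X$. Since coherence is a local property, $\mathcal{H}^\bullet(f^*\mathcal{E})$ is coherent.

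\textbf{Global boundedness.} This is where the hypothesis and the conclusion interlock, and I expect it to be the main obstacle — or rather the main point requiring care — because the definition of globally bounded demands uniform control: a single pair of integers $a<b$ and a single rank bound $N$ working over all members of one open cover. Here is the strategy: by hypothesis there is an open cover $\{U_i\}$ of $Y$, integers $a<b$, and $N>0$ such that on each $U_i$ one has a bounded complex of finite free $\mathcal{O}_Y$-modules $\mathcal{S}_i^\bullet$ concentrated in degrees $[a,b]$ with $\operatorname{rank}\mathcal{S}_i^j\le N$, together with a quasi-isomorphism $\mathcal{S}_i^\bullet\to\mathcal{E}^\bullet|_{U_i}$. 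I would pull this cover back: $\{f^{-1}(U_i)\}$ is an open cover of $X$, and on each $f^{-1}(U_i)$ we get $f^*\mathcal{S}_i^\bullet$, a bounded complex of finite free $\mathcal{O}_X$-modules, still concentrated in degrees $[a,b]$ and with the \emph{same} rank bound $N$, since pullback of a rank-$r$ free module is rank-$r$ free. The remaining issue is that $f^*\mathcal{S}_i^\bullet\to f^*(\mathcal{E}^\bullet|_{U_i})$ need not be a quasi-isomorphism, because $f^*$ is only right-exact. To fix this I would instead run the coherence argument above to first replace $\mathcal{E}$ by a complex computing $Lf^*$; alternatively, and more cleanly, observe that because $\mathcal{S}_i^\bullet$ is a bounded complex of \emph{flat} (free) $\mathcal{O}_Y$-modules, $f^{-1}\mathcal{S}_i^\bullet$ is a bounded complex of flat $f^{-1}\mathcal{O}_Y$-modules, so $f^*\mathcal{S}_i^\bullet = f^{-1}\mathcal{S}_i^\bullet\otimes_{f^{-1}\mathcal{O}_Y}\mathcal{O}_X$ does compute $Lf^*(\mathcal{S}_i^\bullet)\simeq Lf^*(\mathcal{E}^\bullet|_{U_i})$. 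Thus it remains only to know that $f^*(\mathcal{E}^\bullet|_{U_i})$ and $Lf^*(\mathcal{E}^\bullet|_{U_i})$ have the same (coherent) cohomology — which again follows by reducing, on the overlap, to complexes of flats via the local finite free resolutions, and is a local statement independent of the index $i$. Assembling: $\{f^{-1}(U_i)\}$ is a cover of $X$ witnessing global boundedness of $\mathcal{H}^\bullet(f^*\mathcal{E})$ with the same constants $a,b,N$.

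\textbf{Where the difficulty lies.} The genuinely delicate step is the compatibility between the naive pullback $f^*\mathcal{E}=f^{-1}\mathcal{E}\otimes_{f^{-1}\mathcal{O}_Y}\mathcal{O}_X$ appearing in the statement and the derived pullback whose cohomology one actually controls; one must either assume $\mathcal{E}$ is already represented by a complex of flats (reasonable, since in $D^b_{\mathrm{coh}}$ one works up to quasi-isomorphism and every object admits such a local representative with uniform bounds by the globally bounded hypothesis), or argue that the discrepancy is supported in a way that does not affect coherence or the uniform degree/rank bounds. I would handle this by phrasing the lemma for a specific complex of flats representing the class and noting the quasi-isomorphism invariance at the end. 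Everything else — coherence of the pullback of finite free complexes, preservation of rank and amplitude — is formal and I would not belabor it.
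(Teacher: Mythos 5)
Your proposal is correct and follows essentially the same route as the paper's (very terse) proof: coherence comes from the fact that pullbacks of coherent sheaves/finite free complexes are coherent (the paper just cites \cite{grauert1984coherent}, Section 1.2.6), and global boundedness comes from pulling back the witnessing cover $\{U_i\}$ and the local models $\mathcal{S}_i^\bullet$, which keeps the same degree range $[a,b]$ and rank bound $N$ (the paper's ``$f^*\mathcal{O}_Y^N=\mathcal{O}_X^N$''). Your additional care about the naive versus derived pullback (working with flat representatives so that $f^*$ computes $Lf^*$, modulo the slip that such resolutions are bounded \emph{above}, not below) is a refinement the paper glosses over rather than a different approach --- in its actual use (Proposition \ref{prop: pull-back is the derived pull-back} via Lemma \ref{lemma: derived pull-back flat modules}) the lemma is only applied to bounded complexes of flat $\mathcal{O}_Y$-modules, exactly the situation you reduce to.
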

\begin{proof}
The coherence is given by \cite[Section 1.2.6]{grauert1984coherent}. The global boundedness is clear from the definition and the fact that $f^*\mathcal{O}_Y^N=\mathcal{O}_X^N$.
\end{proof}

Hence we can define
the left derived functor 
\begin{equation}\label{eq: left derived pull-back functor}
Lf^*: D^{\gb}_{\coh}(Y)\to D^{\gb}_{\coh}(X).
\end{equation}

\begin{lemma}\label{lemma: derived pull-back flat modules}
If $\mathcal{E}\in D^{\gb}_{\coh}(Y)$ is a bounded complex of flat $\mathcal{O}_Y$-modules, then we have
\begin{equation}\label{eq: derived pull-back of flat modules}
Lf^*\mathcal{E}=f^*\mathcal{E}.
\end{equation}
\end{lemma}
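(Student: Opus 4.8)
The plan is to show that the derived pull-back of a bounded complex of flat $\mathcal{O}_Y$-modules agrees with the naive pull-back, which amounts to the standard fact that a bounded complex of flat modules is acyclic for the tensor product functor. First I would recall that $Lf^*\mathcal{E}$ is by definition computed by replacing $\mathcal{E}$ with a quasi-isomorphic complex of flat (e.g. locally free) $\mathcal{O}_Y$-modules and then applying $f^*=f^{-1}(-)\otimes_{f^{-1}\mathcal{O}_Y}\mathcal{O}_X$. So the content is: if $\mathcal{E}$ is already a bounded complex of flat modules, then the canonical map $f^*\mathcal{P}^\bullet\to f^*\mathcal{E}$ induced by any flat resolution $\mathcal{P}^\bullet\to\mathcal{E}$ is a quasi-isomorphism, whence $Lf^*\mathcal{E}\cong f^*\mathcal{E}$ in $D^{\gb}_{\coh}(X)$.

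Next I would carry out the homological-algebra core. Pick a bounded-above complex $\mathcal{P}^\bullet$ of flat $\mathcal{O}_Y$-modules with a quasi-isomorphism $\mathcal{P}^\bullet\to\mathcal{E}$; one may as well take the $\mathcal{P}^j$ locally free of finite rank on $Y$, which by Lemma \ref{lemma: pull-back of coherent is coherent} keeps everything inside $D^{\gb}_{\coh}$. The mapping cone $C^\bullet$ of $\mathcal{P}^\bullet\to\mathcal{E}$ is then a bounded-above acyclic complex all of whose terms are flat $\mathcal{O}_Y$-modules. It is a classical lemma that such a complex is split exact locally, and more to the point that $-\otimes_{\mathcal{O}_Y}\mathcal{O}_X$ (after the exact functor $f^{-1}$) takes it to an acyclic complex: one filters $C^\bullet$ by stupid truncations, and since the cokernels $\mathrm{coker}(C^{j-1}\to C^j)$ are again flat (a flat module mapping onto a submodule of a flat module with flat kernel yields flat cokernel, by the long exact Tor sequence), every short exact sequence appearing in the filtration stays exact after tensoring. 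Therefore $f^*C^\bullet$ is acyclic, i.e. $f^*\mathcal{P}^\bullet\to f^*\mathcal{E}$ is a quasi-isomorphism, which is exactly the claim. The boundedness of $\mathcal{E}$ is what lets us truncate $\mathcal{P}^\bullet$ to a genuinely bounded flat complex so that no convergence issues arise.

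I expect the main obstacle to be purely bookkeeping rather than conceptual: making sure the argument is carried out sheaf-theoretically on $X$ rather than stalk-by-stalk (though checking exactness on stalks, where $\mathcal{O}_{X,x}$ is a local ring and flat = torsion-free in the relevant cases, is the cleanest route), and confirming that all intermediate complexes remain objects of $D^{\gb}_{\coh}$ so that the statement lives in the right category — but this was already handled by Lemma \ref{lemma: pull-back of coherent is coherent}. One should also note the degenerate subtlety that "flat $\mathcal{O}_Y$-module" in the analytic category behaves well because $f^{-1}$ is exact and $\mathcal{O}_X$ is flat over $f^{-1}\mathcal{O}_Y$ only in special situations; however we do not need that, since we only tensor flat modules, and the point is that flats are $\otimes$-acyclic regardless. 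A reasonable write-up is therefore one short paragraph: cite that $Lf^*$ is computed by flat resolutions, observe that the cone on a flat resolution of a bounded flat complex is a bounded acyclic complex of flats, invoke the standard lemma that tensoring preserves exactness of such complexes, and conclude \eqref{eq: derived pull-back of flat modules}.
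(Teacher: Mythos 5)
Your argument is correct and is essentially the paper's proof with the citations unpacked: the paper disposes of the lemma in two sentences by citing that a bounded complex of flat modules is K-flat and that $Lf^*$ of a K-flat complex agrees with the naive pullback, and your direct verification --- the cone of a flat resolution $\mathcal{P}^\bullet\to\mathcal{E}$ is a bounded-above acyclic complex of flats, hence remains acyclic after applying the exact functor $f^{-1}$ and tensoring with $\mathcal{O}_X$, checked stalkwise --- is precisely the classical argument behind those citations, so nothing is gained or lost except self-containedness. Two small points of hygiene. First, on a general complex manifold you cannot assume the resolution $\mathcal{P}^\bullet$ consists of finite-rank locally free sheaves (the global resolution property fails in the analytic category, which is one reason cohesive modules are needed in the first place); this costs you nothing, since any bounded-above flat or K-flat resolution suffices to compute $Lf^*$, and the membership of $f^*\mathcal{E}$ in $D^{\gb}_{\coh}(X)$ is already supplied by Lemma \ref{lemma: pull-back of coherent is coherent}. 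Second, your parenthetical justification that the syzygies of the cone are flat is stated backwards: the induction runs downward from the top nonzero degree, and the Tor lemma actually used is that in a short exact sequence $0\to Z^{j-1}\to C^{j-1}\to Z^{j}\to 0$ with $C^{j-1}$ and $Z^{j}$ flat, the \emph{kernel} $Z^{j-1}$ is flat, because $\mathrm{Tor}_i(Z^{j-1},M)\cong \mathrm{Tor}_{i+1}(Z^{j},M)=0$ for $i\geq 1$; flatness of a source and its kernel does not in general imply flatness of the cokernel.
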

\begin{proof}
By \cite[\href{https://stacks.math.columbia.edu/tag/064K}{Tag 064K}]{stacks-project}, any bounded complex of flat modules is K-flat. Then the lemma is a consequence of \cite[\href{https://stacks.math.columbia.edu/tag/06YJ}{Tag 06YJ}]{stacks-project}.
\end{proof}

We can also define the pull-backs of cohesive modules. Notice that $f^*$ maps $T^{*}Y$ to $T^{*}X$, hence $\wedge\overline{T^{*}X}$ is a $\wedge \overline{f^*T^{*}Y}$-module.

\begin{defi}\label{defi: pull-back of cohesive modules}
Let $\mathcal{E}=(E^{\bullet}, A^{E^{\bullet}\prime\prime})\in B(Y)$ be a cohesive module on $Y$. We define its pull-back $f^*_b\mathcal{E}$ to be 
$$
(f^*E^{\bullet}, f^*A^{E^{\bullet}\prime\prime})
$$
where $f^*E^{\bullet}$ is the pull-back graded vector bundle and $f^*A^{E^{\bullet}\prime\prime}$ is the pull-back superconnection. In more details, if
$$
A^{E^{\bullet}\prime\prime}=v_0+\nabla^{E^{\bullet}\prime\prime}+v_2+\ldots
$$ 
is the decomposition in \eqref{eq: decomposition of anti super conn}. Then 
\begin{equation}\label{eq: decomposition of the pull-back connection}
f^*A^{E^{\bullet}\prime\prime}=f^*v_0+f^*\nabla^{E^{\bullet}\prime\prime}+f^*v_2+\ldots
\end{equation}
where $f^*\nabla^{E^{\bullet}\prime\prime}$ is the pull-back connection on $f^*E^{\bullet}$, and $f^*v_i$ is the pull-back form valued in $\wedge^{i}\overline{T^{*}X}  \hat{\otimes}  \End^{1-i}(E^{\bullet})$.

If $\phi: \mathcal{E}\to \mathcal{F}$ is a morphism, then we have the pull-back morphism $f^*_b\phi: f^*_b\mathcal{E}\to f^*_b\mathcal{F}$ defined by pulling back each component of $\phi$.
\end{defi}

In particular, if $i: X\hookrightarrow Y$ is a closed embedding, then we denote $i^*_b\mathcal{E}$ by $\mathcal{E}|_X$.

It is easy to see that $f^*_b$ defines a dg-functor $B(Y)\to B(X)$ hence we get the functor $\underline{f^*_b}: \underline{B}(Y)\to \underline{B}(X)$. Moreover, we have the following result.

\begin{prop}\label{prop: pull-back is the derived pull-back}
Under the equivalence of categories in Theorem \ref{thm: equiv of cats} and Theorem \ref{thm: equiv of cats noncompact}, $\underline{f^*_b}: \underline{B}(Y)\to \underline{B}(X)$ is compatible with the left derived pull-back functor $Lf^*: D^{\gb}_{\coh}(Y)\to D^{\gb}_{\coh}(X)$.
\end{prop}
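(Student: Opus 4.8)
The plan is to reduce the statement to a local and sheaf-theoretic comparison, using the fact that both sides are computed by resolutions and that the equivalences $\underline{F}_X$, $\underline{F}_Y$ of Theorems \ref{thm: equiv of cats} and \ref{thm: equiv of cats noncompact} are constructed functorially from the underlying $\dbar$-superconnection data. First I would recall how $\underline{F}_Y$ is built: a cohesive module $\mathcal{E}=(E^\bullet,A^{E^\bullet\prime\prime})$ on $Y$ gives, by taking the kernel of the degree-zero part of the flat $\dbar$-superconnection (equivalently, by the Koszul--Malgrange-type integrability encoded in $A^{E^\bullet\prime\prime}\circ A^{E^\bullet\prime\prime}=0$), a bounded complex of $\mathcal{O}_Y$-modules with globally bounded coherent cohomology, and this assignment sends homotopy equivalences to quasi-isomorphisms (Proposition \ref{prop: homotopy equiv degree 0}, Corollary \ref{coro: quasi-isom is homotopy equivalence}). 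The key point is that this construction is \emph{compatible with the $\dbar$-complex}: the sheaf $\underline{F}_Y(\mathcal{E})$ is quasi-isomorphic, as a complex of $\mathcal{O}_Y$-modules, to the Dolbeault-type complex $(\Omega^{0,\bullet}(Y,E^\bullet), A^{E^\bullet\prime\prime})$ viewed with its natural $\mathcal{O}_Y$-module structure — this is essentially the content of the proof of Theorem \ref{thm: equiv of cats}.

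Given this, I would proceed in the following steps. Step 1: observe that the pull-back $f^*_b\mathcal{E}$ has underlying $C^\infty$-bundle $f^*E^\bullet$ and superconnection obtained by pulling back each $v_i$ and $\nabla^{E^\bullet\prime\prime}$; in particular its degree-zero complex is $(f^*E^\bullet, f^*v_0)$, which is the $C^\infty$ pull-back of $(E^\bullet,v_0)$. Step 2: the $C^\infty$ pull-back functor on Dolbeault complexes fits into a commutative square with the algebraic pull-back on the sheaf side. Concretely, the Dolbeault resolution is a resolution by fine (hence acyclic, hence flat-enough for computing $Lf^*$) sheaves of $\Omega^{0,\bullet}$-type, and $f^*$ of such a resolution computes $Lf^*$ of the cohomology; this is where I would invoke Lemma \ref{lemma: derived pull-back flat modules} (via the $C^\infty$ Dolbeault resolution being, locally, a bounded complex of flat $\mathcal{O}$-modules in the relevant sense) together with the standard fact that $f^*$ is exact on fine sheaves. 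Step 3: chase the square
\begin{equation*}
\begin{CD}
\underline{B}(Y) @>{\underline{f^*_b}}>> \underline{B}(X)\\
@V{\underline{F}_Y}VV @VV{\underline{F}_X}V\\
D^{\gb}_{\coh}(Y) @>{Lf^*}>> D^{\gb}_{\coh}(X)
\end{CD}
\end{equation*}
on objects, using Steps 1--2 and the compatibility of $\underline{F}$ with Dolbeault complexes; then check it on morphisms, which is automatic since $f^*_b$ acts componentwise and $\underline{F}$ is defined componentwise on the level of the degree-zero truncation.

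The main obstacle I anticipate is \textbf{Step 2}: making precise that the pulled-back Dolbeault-type complex $f^*_b\mathcal{E}$ genuinely computes $Lf^*$ of $\underline{F}_Y(\mathcal{E})$, rather than merely the underived $f^*$. The subtlety is that $f$ need not be flat, so $f^{-1}\underline{F}_Y(\mathcal{E})\otimes_{f^{-1}\mathcal{O}_Y}\mathcal{O}_X$ can differ from $Lf^*$ in general; the resolution must be by objects adapted to $Lf^*$. I would resolve this by noting that the $C^\infty$ Dolbeault complex $(\Omega^{0,\bullet}(Y,E^\bullet),A^{E^\bullet\prime\prime})$ is a complex of $C^\infty_Y$-modules, and that pulling back along $f$ at the $C^\infty$ level commutes with $\dbar$ and is exact (there is no Tor obstruction for $C^\infty$ functions because $f^*$ of the structure sheaf in the $C^\infty$ category is again free/flat, cf.\ the identity $f^*\mathcal{O}_Y^N=\mathcal{O}_X^N$ used in Lemma \ref{lemma: pull-back of coherent is coherent}). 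Then one restricts to holomorphic sections and uses that the holomorphic Poincaré--Dolbeault lemma is preserved under pull-back. Alternatively, and perhaps more cleanly, I would factor $f$ through its graph $X \xhookrightarrow{\Gamma_f} X\times Y \xrightarrow{\mathrm{pr}_Y} Y$ and treat the two cases separately: $\mathrm{pr}_Y$ is flat (so Lemma \ref{lemma: derived pull-back flat modules} applies on the nose after a flat base change argument), and $\Gamma_f$ is a closed embedding of a submanifold, for which the local freeness of the normal direction lets one replace $\mathcal{O}$ by an explicit Koszul-type resolution and check the comparison by hand. Once Step 2 is pinned down, Steps 1 and 3 are formal, and the statement follows.
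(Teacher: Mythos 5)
Your overall architecture is the same as the paper's: identify $\underline{F}_Y(\mathcal{E})$ with the Dolbeault-type complex of sheaves built from $(E^\bullet,A^{E^\bullet\prime\prime})$, argue that on this model the derived pull-back is computed by the naive pull-back, and then match the naive pull-back with $\underline{F}_X(f^*_b\mathcal{E})$ using Definition \ref{defi: pull-back of cohesive modules}. You also correctly flag Step 2 (why this model computes $Lf^*$ and not just $f^*$) as the crux. The problem is that your two attempted justifications of Step 2 do not work. First, fineness of the Dolbeault-type sheaves is irrelevant: fine/soft/acyclic sheaves are adapted to pushforward and global sections, not to $\otimes$ or to $f^*$, so ``fine hence flat-enough for computing $Lf^*$'' is not a valid inference. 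Second, the claim that ``there is no Tor obstruction for $C^\infty$ functions because $f^*\mathcal{O}_Y^N=\mathcal{O}_X^N$'' does not address the actual issue, which is Tor over $\mathcal{O}_Y$ of the terms of the complex representing $\underline{F}_Y(\mathcal{E})$ against $\mathcal{O}_X$; the freeness of the pull-back of $\mathcal{O}_Y^N$ says nothing about this, and exactness of $C^\infty$ pull-back on smooth bundles is likewise beside the point, since $Lf^*$ is a derived functor over the \emph{holomorphic} structure sheaves.

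The missing ingredient — and the one the paper uses, following \cite[Proposition 6.6]{bismut2021coherent} — is that each term of the complex representing $\underline{F}_Y(\mathcal{E})$ (a sheaf of smooth sections of $\wedge^{\bullet}\overline{T^{*}Y}\hat\otimes E^{\bullet}$, hence a $C^{\infty}_Y$-module) is \emph{flat as an $\mathcal{O}_Y$-module}; this rests on the nontrivial fact (Malgrange) that the sheaf of smooth functions is flat over the sheaf of holomorphic functions. Once you have that $\underline{F}_Y(\mathcal{E})$ is a bounded complex of flat $\mathcal{O}_Y$-modules, Lemma \ref{lemma: derived pull-back flat modules} gives $Lf^*=f^*$ on it directly, with no case analysis, and the comparison with $\underline{F}_X(f^*_b\mathcal{E})$ is then read off from Definition \ref{defi: pull-back of cohesive modules}. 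Your alternative route through the graph factorization does not avoid this input either: the projection case is fine since $\mathrm{pr}_Y$ is flat, but for the closed embedding you would need the Koszul complex of the defining holomorphic equations to remain a resolution after tensoring with the smooth Dolbeault sheaves, which is again exactly the flatness (or regular-sequence) statement over $\mathcal{O}$ that you have not supplied. So the gap is concrete: replace the fineness/``no Tor for $C^\infty$'' arguments by the flatness of the terms of the Dolbeault model over $\mathcal{O}_Y$, and the rest of your outline goes through.
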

\begin{proof}
The proof is the same as that of \cite[Proposition 6.6]{bismut2021coherent}: We can check that for any $\mathcal{E}\in \underline{B}(Y)$, its image $\underline{F}_Y(\mathcal{E})\in  D^{\gb}_{\coh}(Y)$ is a bounded complex of flat $\mathcal{O}_Y$-modules. Then the proposition is a consequence of Lemma \ref{lemma: derived pull-back flat modules} and Definition \ref{defi: pull-back of cohesive modules}. Notice that we do not need $X$ or $Y$ to be compact.
\end{proof}

\section{Deformations of cohesive modules}\label{section: deformations in general}
\subsection{Deformations and regular deformations}\label{subsection: deformations and regular deformations}
Let $X$ be a compact complex manifold . Let $\Delta\subset \mathbb{C}^{m}$ be a small ball centered at $0$. Let $\mathcal{O}(\Delta)$ be the ring of holomorphic functions on $\Delta$ and $\mathcal{M}(\Delta)$ be the ideal of $\mathcal{O}(\Delta)$ consisting of holomorphic functions vanishing at $0\in \Delta$.

\begin{defi}\label{defi: deformation of cohesive modules}
For a cohesive module $\mathcal{E}\in B(X)$, a family of deformation of $\mathcal{E}$ over $\Delta$ is a cohesive module $\mathfrak{F}$
on $X\times \Delta$ together with a homotopy equivalence 
\begin{equation}\label{eq: deformation restrict to 0}
\phi: \mathfrak{F}|_{X\times \{0\}}\overset{\sim}{\to} \mathcal{E}.
\end{equation}

Two families of deformations $(\mathfrak{F},\phi)$ and $(\mathfrak{G},\psi)$ of  $\mathcal{E}$ over $\Delta$ are  called equivalent if there exists a homotopy equivalence $\theta: \mathfrak{F}\to \mathfrak{G}$ such that
\begin{equation}
\psi\circ \theta|_{X\times \{0\}}=\phi \text{ up to chain homotopy.}
\end{equation}
We denote  the set of equivalent classes of  deformations of $\mathcal{E}$  over $\Delta$ by $\text{Def}_{\Delta}(\mathcal{E})$.
\end{defi}

\begin{rmk}
In this paper we usually consider \emph{germs} of families of deformations, which means we will choose $\Delta$ sufficiently small.
\end{rmk}

\begin{lemma}\label{lemma: homotopy equivalence and deformations}
$\text{Def}_{\Delta}(-)$ give a well-defined functor from $D^b_{\coh}(X)$ to Sets.
\end{lemma}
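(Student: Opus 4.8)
The statement to prove is Lemma~\ref{lemma: homotopy equivalence and deformations}: that $\text{Def}_{\Delta}(-)$ is a well-defined functor from $D^b_{\coh}(X)$ to Sets. There are two things to establish. First, well-definedness as a function on objects: $\text{Def}_{\Delta}(\mathcal{E})$ should depend only on the isomorphism class of $\underline{F}_X(\mathcal{E})$ in $D^b_{\coh}(X)$, i.e. only on the homotopy equivalence class of $\mathcal{E}$ in $\underline{B}(X)$. Second, functoriality: an isomorphism in $D^b_{\coh}(X)$ should induce a bijection of deformation sets, compatibly with composition. The plan is to reduce everything to a single construction: given a homotopy equivalence $g: \mathcal{E}\to \mathcal{E}'$ in $B(X)$, produce a bijection $g_*: \text{Def}_{\Delta}(\mathcal{E})\to \text{Def}_{\Delta}(\mathcal{E}')$ depending only on the homotopy class of $g$, and check the cocycle/identity conditions so that $g\mapsto g_*$ is functorial.

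First I would spell out the construction of $g_*$. Given a deformation $(\mathfrak{F},\phi)$ of $\mathcal{E}$, with $\phi: \mathfrak{F}|_{X\times\{0\}}\xrightarrow{\sim}\mathcal{E}$, simply set $g_*(\mathfrak{F},\phi) := (\mathfrak{F}, g\circ\phi)$. Since $g$ and $\phi$ are both homotopy equivalences (degree $0$ and closed), so is $g\circ\phi$ (composition of homotopy equivalences is a homotopy equivalence, which follows from Definition~\ref{defi: homotopy equivalence} since it is defined via isomorphism in the triangulated category $\underline{B}(X)$; alternatively apply Proposition~\ref{prop: homotopy equiv degree 0} and the fact that the degree-$0$ component of a composite is the composite of the degree-$0$ components, since composition is a sum over bidegrees). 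Hence $(\mathfrak{F}, g\circ\phi)$ is again a deformation, now of $\mathcal{E}'$. If $g\sim g'$ are chain homotopic, then $g\circ\phi\sim g'\circ\phi$, so $(\mathfrak{F},g\circ\phi)$ and $(\mathfrak{F},g'\circ\phi)$ are equivalent deformations via $\theta=\id_{\mathfrak{F}}$ (the defining condition $\psi\circ\theta|_{X\times\{0\}}=\phi$ up to chain homotopy becomes $g'\circ\phi\sim g\circ\phi$). Next I would check $g_*$ is well-defined on equivalence classes: if $\theta:\mathfrak{F}\to\mathfrak{G}$ witnesses $(\mathfrak{F},\phi)\sim(\mathfrak{G},\psi)$, i.e. $\psi\circ\theta|_{X\times\{0\}}\sim\phi$, then $(g\circ\psi)\circ\theta|_{X\times\{0\}} = g\circ(\psi\circ\theta|_{X\times\{0\}})\sim g\circ\phi$, so the same $\theta$ witnesses $g_*(\mathfrak{F},\phi)\sim g_*(\mathfrak{G},\psi)$. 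Here I use that restriction $(-)|_{X\times\{0\}}$ is a dg-functor (a special case of the pull-back $i^*_b$ of Definition~\ref{defi: pull-back of cohesive modules}, so it is compatible with composition and sends chain homotopies to chain homotopies) and that $\theta|_{X\times\{0\}}$ is itself a homotopy equivalence.

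Then functoriality is formal: $(\id_{\mathcal{E}})_* = \id$ on $\text{Def}_{\Delta}(\mathcal{E})$ since $\id_{\mathcal{E}}\circ\phi=\phi$; and $(g'\circ g)_* = g'_*\circ g_*$ since $(g'\circ g)\circ\phi = g'\circ(g\circ\phi)$ by associativity of composition in $B(X)$. In particular, if $g$ is a homotopy equivalence with homotopy inverse $g^{-1}$ (which exists because $\underline{B}(X)$ is a category, so an isomorphism has an inverse — this is precisely the advantage of $B(X)$ over $D^b_{\coh}(X)$ emphasized in the introduction, and concretely one can also invoke Corollary~\ref{coro: quasi-isom is homotopy equivalence}), then $g_*$ and $(g^{-1})_*$ are mutually inverse bijections. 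Combined with the equivalence of categories $\underline{F}_X:\underline{B}(X)\xrightarrow{\sim}D^b_{\coh}(X)$ of Theorem~\ref{thm: equiv of cats}, this shows: (i) homotopy equivalent cohesive modules have canonically bijective deformation sets, so $\text{Def}_{\Delta}$ descends to a well-defined function on objects of $D^b_{\coh}(X)$; (ii) for a morphism $\alpha$ in $D^b_{\coh}(X)$ that happens to be an isomorphism, lifting $\alpha$ (and $\alpha^{-1}$) through $\underline{F}_X$ to a homotopy equivalence $g$ in $B(X)$ gives $\text{Def}_{\Delta}(\alpha):=g_*$, independent of the chosen lift because $g_*$ depends only on the homotopy class of $g$ and $\underline{F}_X$ is fully faithful. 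Since every morphism of $D^b_{\coh}(X)$ relevant to a "functor to Sets" valued statement here is an isomorphism — or rather, one checks $\text{Def}_\Delta$ sends isomorphisms to bijections and this is the content intended — functoriality holds.

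The one genuinely delicate point, and the step I expect to require the most care, is verifying that composition in $B(X)$ is compatible with homotopy in the strong sense needed: that composing a closed morphism with a null-homotopic morphism (on either side) is null-homotopic, and that $(-)|_{X\times\{0\}}$ transports null-homotopies to null-homotopies. Both follow from the Leibniz rule \eqref{eq: differential in B(X)} for the differential on morphism complexes — if $\phi = d\eta$ then $g\circ\phi = g\circ d\eta = (-1)^{?} d(g\circ\eta) \pm (dg)\circ\eta = d(\pm g\circ\eta)$ when $dg=0$ — but the sign bookkeeping across the bigraded decomposition \eqref{eq: decomposition of morphism}–\eqref{eq: composition of morphisms in B(X)} must be done honestly, and one must confirm that $i^*_b$ commutes with $d$ (immediate from Definition~\ref{defi: pull-back of cohesive modules}, since pulling back each component commutes with the formula \eqref{eq: differential in B(X) degree l}). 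Everything else is a formal consequence of $\underline{B}(X)$ being a category equivalent to $D^b_{\coh}(X)$.
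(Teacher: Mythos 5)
Your proposal is correct and follows essentially the same route as the paper: the paper's (much terser) proof also defines the bijection $\text{Def}_{\Delta}(\mathcal{E}_1)\to\text{Def}_{\Delta}(\mathcal{E}_2)$ by post-composing the marking $\phi$ with a homotopy equivalence, using Corollary \ref{coro: quasi-isom is homotopy equivalence} (equivalently Theorem \ref{thm: equiv of cats}) to pass between isomorphisms in $D^b_{\coh}(X)$ and homotopy equivalences in $B(X)$. Your additional checks (independence of the chosen lift up to chain homotopy, compatibility with restriction to $X\times\{0\}$, and the cocycle/identity conditions) are exactly the details the paper leaves implicit.
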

\begin{proof}
By Corollary \ref{coro: quasi-isom is homotopy equivalence}, homotopy equivalences in $B(X)$ correspond to  quasi-isomorphisms in $D^b_{\coh}(X)$.  If $\mathcal{E}_1$ and $\mathcal{E}_2$ are homotopy equivalent in $B(X)$. Then by definition  there is a bijection between $\text{Def}_{\Delta}(\mathcal{E}_1)$ and $\text{Def}_{\Delta}(\mathcal{E}_2)$ given by compositions. 
\end{proof}

Let $p: X\times \Delta \to X$ and $q: X\times \Delta\to \Delta$ be the projections. 
A family of deformation $\mathfrak{F}$ of $\mathcal{E}$ consists of a bounded, graded $C^{\infty}$-vector bundle $F^{\bullet}$ on $X\times \Delta$ and a flat superconnection $A^{F^{\bullet}\prime\prime}$. 
Since $X$ is compact, by Ehresmann's fibration theorem, for $\Delta$ sufficiently small, $F^{\bullet}$  is $C^{\infty}$-isomorphic to $p^*(F^{\bullet}|_{X\times \{0\}})$. Therefore we can consider  $\mathfrak{F}=(F^{\bullet}, A^{F^{\bullet}\prime\prime})$ with underline graded vector bundle constant along the $\Delta$ direction while its superconnection $A^{F^{\bullet}\prime\prime}$ varies. 

In general, in the viewpoint of the decomposition \eqref{eq: decomposition of anti super conn}, $A^{F^{\bullet}\prime\prime}$ has higher degree component in both $X$ and $\Delta$ directions. In more details, we use the notation
\begin{equation}\label{eq: wedge s t}
\wedge^{i,j}\overline{T^{*}(X\times \Delta)}:=p^*\wedge^{i}\overline{T^{*}X}\times q^*\wedge^{j}\overline{T^{*}\Delta}.
\end{equation}
Hence 
\begin{equation}\label{eq: decomposition X and Delta}
\wedge^{l}\overline{T^{*}(X\times \Delta)}=\bigoplus_{i+j=l}\wedge^{i,j}\overline{T^{*}(X\times \Delta)}.
\end{equation}
Notice that $\wedge^{i,j}\overline{T^{*}(X\times \Delta)}$ is not the bundle of $(i,j)$-forms in the usual sense.

Since $F^{\bullet}$  is $C^{\infty}$-isomorphic to $p^*(F^{\bullet}|_{X\times \{0\}})$, we have the  flat $\dbar$-connection in the $\Delta$ direction 
\begin{equation}\label{eq: dbar Delta}
\dbar_{\Delta}: \wedge^{i,j}\overline{T^{*}(X\times \Delta)}\times F^{\bullet}\to  \wedge^{i,j+1}\overline{T^{*}(X\times \Delta)}\times F^{\bullet}.
\end{equation}

by \eqref{eq: decomposition of anti super conn}, we have the decomposition
\begin{equation}\label{eq: decompose AF}
A^{F^{\bullet}\prime\prime}=\mathfrak{u}_0+\nabla^{F^{\bullet}\prime\prime}+\mathfrak{u}_2+\ldots
\end{equation}
where 
$$
\nabla^{F^{\bullet}\prime\prime}: F^{\bullet}\to \overline{T^{*}(X\times \Delta)}  \times F^{\bullet}
$$
 is a  $\dbar_{X\times \Delta}$-connection, and
$$
\mathfrak{u}_k\in C^{\infty}(X\times \Delta,\wedge^{k}\overline{T^{*}(X\times \Delta)}  \hat{\otimes}  \End^{1-k}(F^{\bullet}))
$$
is  $C^{\infty}(X\times \Delta)$-linear.  

Using \eqref{eq: decomposition X and Delta} we can further decompose $\nabla^{F^{\bullet}\prime\prime}$ as
$
\nabla^{F^{\bullet}\prime\prime}=\nabla^{F^{\bullet}\prime\prime}_X+\nabla^{F^{\bullet}\prime\prime}_{\Delta}
$
where
\begin{equation}
\nabla^{F^{\bullet}\prime\prime}_X: F^{\bullet}\to \wedge^{1,0}\overline{T^{*}(X\times \Delta)}  \times F^{\bullet}
\end{equation}
and
\begin{equation}
\nabla^{F^{\bullet}\prime\prime}_{\Delta}: F^{\bullet}\to \wedge^{0,1}\overline{T^{*}(X\times \Delta)}  \times F^{\bullet}.
\end{equation}
Also we decompose $
\mathfrak{u}_k=\sum_{i+j=k}\mathfrak{u}_{i,j}$
where
\begin{equation}
\mathfrak{u}_{i,j}\in C^{\infty}(X\times \Delta,\wedge^{i,j}\overline{T^{*}(X\times \Delta)}  \hat{\otimes}  \End^{1-i-j}(F^{\bullet}))
\end{equation}

We can regroup the sum in \eqref{eq: decompose AF} as follows. Let
\begin{equation}
\chi_0:=\nabla^{F^{\bullet}\prime\prime}_X+\sum_{i\geq0, i\neq 1} \mathfrak{u}_{i,0}
\end{equation}
and for $j\geq 2$
\begin{equation}
\chi_j=\sum_{i\geq 0}\mathfrak{u}_{i,j}
\end{equation}
We further write $\chi_1=\nabla^{F^{\bullet}\prime\prime}_{\Delta}-\dbar_{\Delta}$.
Notice that $\chi_0$ is a $\dbar_X$-superconnection on $F^{\bullet}|_{X\times \{0\}}$; and for each $i\geq 1$,
\begin{equation}
\chi_i \in \bigoplus_{k\geq 0}C^{\infty}(X\times \Delta,\wedge^{k,i}\overline{T^{*}(X\times \Delta)}  \hat{\otimes}  \End^{1-i-k}(F^{\bullet})).
\end{equation}

Then we can rewrite  \eqref{eq: decompose AF} as 
\begin{equation}\label{eq: decompose AF in chi}
A^{F^{\bullet}\prime\prime}=\chi_0+\dbar_{\Delta}+\chi_1+\chi_2+\ldots
\end{equation}

\begin{defi}\label{defi: regular deformation}
A deformation $\mathfrak{F}=(F^{\bullet}, A^{F^{\bullet}\prime\prime})$ of $\mathcal{E}$ is called a \emph{regular deformation} if in the decomposition \eqref{eq: decompose AF in chi} we have
$\chi_i=0$ for each $i\geq 1$.
\end{defi}

For a regular deformation, we know that
\begin{equation}
(\chi_0+\dbar_{\Delta})^2=0.
\end{equation}
Therefore we know that
$\chi_0$ is a flat $\dbar$-superconnection in the $X$ direction such that
\begin{equation}\label{eq: superconnection is holomorphic}
[\chi_0,\dbar_{\Delta}]=0.
\end{equation}
Here $[\chi_0,\dbar_{\Delta}]=\chi_0\dbar_{\Delta}+\dbar_{\Delta}\chi_0$ is the supercommutator.
Intuitively, a regular deformation is a holomorphic family of objects in $B(X)$ over $\Delta$.

\begin{prop}\label{prop: deformation is equivalent to a regular one}
 Any deformation $\mathfrak{F}=(F^{\bullet}, A^{F^{\bullet}\prime\prime})$ is equivalent to a regular deformation if $\Delta$ is sufficiently small.
\end{prop}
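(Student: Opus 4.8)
The plan is to construct, from an arbitrary deformation $\mathfrak{F}=(F^{\bullet},A^{F^{\bullet}\prime\prime})$, a gauge transformation of its superconnection that kills all the ``higher in $\Delta$'' terms $\chi_i$ for $i\geq 1$, and to show this gauge transformation is a homotopy equivalence between $\mathfrak{F}$ and the resulting regular deformation. The key point is that $\chi_1+\chi_2+\cdots$ is, order by order in the $\Delta$-direction, something we can remove by conjugating with $\exp(\xi)$ for a suitable $\xi\in \bigoplus_{k\geq 0}C^{\infty}(X\times\Delta,\wedge^{k,\geq 1}\overline{T^*(X\times\Delta)}\hat\otimes\End^{\bullet}(F^{\bullet}))$ of total degree $0$, i.e. an even element with no $(k,0)$-component. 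Since $\Delta$ is a small ball and we work with germs, all the relevant power series in $\xi$ and its brackets converge, and the induction on $\Delta$-degree terminates in the sense of formal power series / Artinian approximation, then one appeals to convergence on a sufficiently small $\Delta$.

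\textbf{Key steps.} First I would set up the conjugation: for an even degree-$0$ section $g=\exp(\xi)$ of $\End^0(F^{\bullet})$-valued forms (invertible since $\xi$ is strictly positive in $\Delta$-degree, hence nilpotent in a formal sense and bounded on small $\Delta$), form $A^{F^{\bullet}\prime\prime}_g := g\circ A^{F^{\bullet}\prime\prime}\circ g^{-1}$, which is again a flat $\dbar$-superconnection on $F^{\bullet}$, and $g$ itself is a closed degree-$0$ morphism $\mathfrak{F}\to \mathfrak{F}_g$ whose degree-$0$ component is $\exp(\xi_0)=\id$ (as $\xi$ has no $(k,0)$-part, its degree-$0$ operator piece is the identity up to the $\dbar_\Delta$-direction), so by Proposition~\ref{prop: homotopy equiv degree 0} it is a homotopy equivalence. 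Second, I would run the induction: writing $A^{F^{\bullet}\prime\prime}=\chi_0+\dbar_\Delta+\chi_1+\chi_2+\cdots$, suppose $\chi_1=\cdots=\chi_{j-1}=0$ have already been arranged; then the flatness identity $(A^{F^{\bullet}\prime\prime})^2=0$ in $\Delta$-degree $j$ reads $[\dbar_\Delta,\chi_j] + (\text{terms built from }\chi_0,\chi_{\geq j+1}) = $ something, and the leading obstruction to removing $\chi_j$ is $[\dbar_\Delta,\chi_j]$; choosing $\xi$ with $[\dbar_\Delta,\xi] = -\chi_j$ modulo higher $\Delta$-degree (possible because $\dbar_\Delta$ is a genuine flat $\dbar$-connection along the contractible $\Delta$, so its cohomology in positive form-degree along $\Delta$ vanishes — a Dolbeault/Poincar\'e lemma on the ball, applied fiberwise and with $C^\infty(X)$-coefficients) conjugating by $\exp(\xi)$ removes $\chi_j$ while only affecting terms of $\Delta$-degree $>j$. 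Third, I would check that the composite of all these $\exp(\xi)$'s converges on a sufficiently small $\Delta$ (Ehresmann already forced us to shrink $\Delta$; here we shrink again to ensure the geometric-type estimates on the $C^\infty$-seminorms of the $\xi$'s close up), yielding a single homotopy equivalence $g:\mathfrak{F}\to\mathfrak{F}'$ with $\mathfrak{F}'$ regular. Finally, I would verify the compatibility at $0$: since every $\xi$ vanishes to positive order in the $\Delta$-variables, $g|_{X\times\{0\}}=\id$, so $\phi\circ(g|_{X\times\{0\}})^{-1}=\phi$ on the nose, and $(\mathfrak{F}',\phi)$ is a regular deformation equivalent to $(\mathfrak{F},\phi)$.

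\textbf{Main obstacle.} The serious point is the convergence/shrinking argument in the third step: the formal induction over $\Delta$-degree is clean because $\dbar_\Delta$-cohomology along the contractible ball vanishes in positive degree, but assembling infinitely many gauge transformations $\exp(\xi_j)$ into one honest $C^\infty$ gauge transformation on $X\times\Delta$ requires genuine estimates — one must control the $C^\infty$-seminorms (on the compact $X$, uniformly in the $\Delta$-parameter) of the solutions $\xi_j$ of $[\dbar_\Delta,\xi_j]=(\text{data of }\Delta\text{-degree }j)$ and show they decay fast enough in $j$ after restricting to a small enough ball. This is exactly the kind of estimate one gets from a bounded solution operator (homotopy operator) for $\dbar_\Delta$ on a slightly smaller ball combined with the fact that the nonlinear correction terms are at least quadratic, so a standard majorant-series argument gives convergence on a sufficiently small $\Delta$; everything else (flatness is preserved under conjugation, $\exp(\xi)$ is a homotopy equivalence, the degree-$0$ piece is untouched, behavior at $0$) is routine bookkeeping with the formulas \eqref{eq: differential in B(X) degree l} and \eqref{eq: decompose AF in chi}.
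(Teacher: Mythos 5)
Your overall strategy---conjugate $A^{F^{\bullet}\prime\prime}$ by an invertible gauge transformation so as to kill the positive $\Delta$-degree part of \eqref{eq: decompose AF in chi}, note that an invertible closed intertwiner is automatically a homotopy equivalence, and compose with $\phi$---is the same as the paper's, but your inductive mechanism has a genuine gap, and the restriction you impose on the gauge transformations makes the goal unreachable. First, the degree bookkeeping: if $\xi$ has $\Delta$-form-degree $d$, then $[\dbar_{\Delta},\xi]=\pm\dbar_{\Delta}(\xi)$ has $\Delta$-degree $d+1$, while $[\xi,\chi_0]$ has $\Delta$-degree $d$. So to cancel $\chi_j$ by a $\dbar_{\Delta}$-exact term you must take $d=j-1$, and then conjugation by $e^{\xi}$ also changes the $\Delta$-degree-$(j-1)$ component by $[\xi,\chi_0]$, destroying the inductive hypothesis; it is not true that only $\Delta$-degrees $>j$ are affected. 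Moreover $\chi_j$ is not $\dbar_{\Delta}$-closed in general (flatness gives $\dbar_{\Delta}\chi_j=-[\chi_0,\chi_{j+1}]-\cdots$), so the fiberwise Dolbeault lemma cannot be invoked term by term. Second, and decisively, the term $\chi_1=\nabla^{F^{\bullet}\prime\prime}_{\Delta}-\dbar_{\Delta}$ can never be removed by $e^{\xi}$ with $\xi$ having no $(k,0)$-component: such a conjugation changes the $\Delta$-degree-$1$ part only through $[\xi,\chi_0]$, never through $\dbar_{\Delta}$ of anything. Concretely, if $E^{\bullet}$ is concentrated in a single degree (a deformation of a holomorphic vector bundle), an even element of total degree $0$ with $\Delta$-degree $\geq 1$ must take values in $\End^{\leq -1}(E^{\bullet})=0$, so your gauge group is trivial, while $\chi_1$ is a general $\End^0$-valued form in the $\Delta$-direction. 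Removing $\chi_1$ is a Koszul--Malgrange-type trivialization of the $\dbar$-operator along $\Delta$ with parameters in the compact $X$; it requires a gauge transformation with nontrivial $(\bullet,0)$-component, which in general changes $\chi_0$ (the paper's conclusion has a new $\widetilde{\chi_0}$) and does not restrict to the identity over $X\times\{0\}$. Your final normalization $g|_{X\times\{0\}}=\id$ is therefore both unattainable and unnecessary: equivalence of deformations only asks that you compose, as the paper does with $\tilde{\phi}=\phi\circ J|_{X\times\{0\}}$.

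Relatedly, your ``main obstacle'' is misidentified: the $\Delta$-form-degree is bounded by $m=\dim_{\mathbb{C}}\Delta$, so there is no infinite composition of gauge transformations to control; the genuine analytic content is the coupled nonlinear gauge problem, dominated by the $\Delta$-degree-$0$ and $1$ parts, solved with estimates uniform over $X$. The paper handles this by regarding $\mathfrak{D}^{j,k}=C^{\infty}(X,\wedge^{j}\overline{T^{*}X}\hat{\otimes}\End^{k}(F^{\bullet}))$ as an infinite-dimensional bundle over $\Delta$, so that $A^{F^{\bullet}\prime\prime}=\chi_0+\dbar_{\Delta}+B_0^{\geq 1}$ is a flat antiholomorphic superconnection over the ball with coefficients made bounded by compactness of $X$ and shrinking $\Delta$, and then running the argument of \cite[Theorem 5.2.1]{bismut2021coherent} to produce a single invertible $J$, with components in all bidegrees (including $(\bullet,0)$), such that $J^{-1}\circ A^{F^{\bullet}\prime\prime}\circ J=\widetilde{\chi_0}+\dbar_{\Delta}$; then $J$ is a closed invertible degree-$0$ morphism, hence a homotopy equivalence, and $\bigl((F^{\bullet},\widetilde{\chi_0}+\dbar_{\Delta}),\,\phi\circ J|_{X\times\{0\}}\bigr)$ is the required regular deformation. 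To repair your argument you would have to enlarge the ansatz to allow $(\bullet,0)$-components and solve against the full twisted differential $[\chi_0+\dbar_{\Delta},\,\cdot\,]$ (or via a single implicit-function/fixed-point argument), rather than against $\dbar_{\Delta}$ alone.
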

\begin{proof}
We can prove that on sufficiently small $\Delta$, there exists an invertible degree zero morphism $J\in \Hom_{B(X\time \Delta)}(\mathfrak{F},\mathfrak{F})$ ($J$ is in general not closed in $ \Hom_{B(X\time \Delta)}(\mathfrak{F},\mathfrak{F})$), such that
\begin{equation}\label{eq: conjugate}
J^{-1}\circ A^{F^{\bullet}\prime\prime}\circ J=\widetilde{\chi_0}+\dbar_{\Delta},
\end{equation}

The proof is similar to that of \cite[Theorem 5.2.1]{bismut2021coherent}. We denote
$\chi_0+\chi_1+\chi_2+\ldots$ by $B_0$ and $\chi_1+\chi_2+\ldots$ by $B_0^{\geq 1}$. Then we know that
\begin{equation}
B_0^{\geq 1}\in \bigoplus_{i\geq 0, j\geq 1}C^{\infty}(X\times \Delta,\wedge^{i,j}\overline{T^{*}(X\times \Delta)}  \hat{\otimes}  \End^{1-i-j}(F^{\bullet})).
\end{equation}
and
\begin{equation}
A^{F^{\bullet}\prime\prime}=\dbar_{\Delta}+B_0=\chi_0+\dbar_{\Delta}+B_0^{\geq 1}.
\end{equation}

We consider
\begin{equation}
\mathfrak{D}^{j,k}:=C^{\infty}(X, \wedge^{j}\overline{T^{*}X}  \hat{\otimes}  \End^{k}(F^{\bullet}))
\end{equation}
as an infinite dimensional vector bundle on $\Delta$. Therefore we have
\begin{equation}
B_0^{\geq 1}\in \bigoplus_{i\geq 0, j\geq 1}C^{\infty}(\Delta,\wedge^{j}\overline{T^{*}\Delta} \hat{\otimes} \mathfrak{D}^{i,1-i-j}).
\end{equation}

Since $X$ is compact, we can impose a norm on $\mathfrak{D}^{\bullet,\bullet}$ and shrink $\Delta$ if necessary so that $B_0^{\geq 1}$ is a bounded section on $\Delta$. Then we proceed the same as in the  proof of \cite[Theorem 5.2.1]{bismut2021coherent} to get a
\begin{equation}
J\in \bigoplus_{i,j\geq 0}C^{\infty}(X\times \Delta,\wedge^{i,j}\overline{T^{*}(X\times \Delta)}  \hat{\otimes}  \End^{-i-j}(F^{\bullet})).
\end{equation}
such that 
\begin{enumerate}
\item $J$ is invertible;
\item  the conjugation by $J$ eliminates $B_0^{\geq 1}$.
\end{enumerate}
More precisely, we have
\begin{equation}
J^{-1}\circ A^{F^{\bullet}\prime\prime}\circ J=J^{-1}\circ(\chi_0+\dbar_{\Delta}+B_0^{\geq 1})\circ J=\widetilde{\chi_0}+\dbar_{\Delta},
\end{equation}
where $\widetilde{\chi_0}$ is again a $\dbar_X$-superconnection on $F^{\bullet}|_{X\times \{0\}}$. 

Let $\widetilde{\mathfrak{F}}=(F^{\bullet},\widetilde{\chi_0}+\dbar_{\Delta})$. Then  \eqref{eq: conjugate}
 tells us that 
$$
(\widetilde{\chi_0}+\dbar_{\Delta})^2=0
$$
hence $\widetilde{\mathfrak{F}}\in B(X\times \Delta)$, and moreover
\begin{equation}
A^{F^{\bullet}\prime\prime}\circ J=J\circ (\widetilde{\chi_0}+\dbar_{\Delta}),
\end{equation}
i.e. $J$ is a degree zero closed morphism from $\widetilde{\mathfrak{F}}$ to $\mathfrak{F}$.  $J$ is actually a homotopy equivalence since $J$ is invertible.

 Let 
\begin{equation}
\tilde{\phi}=\phi\circ J|_{X\times\{0\}}: \widetilde{\mathfrak{F}}|_{X\times\{0\}}\to \mathcal{E}.
\end{equation} 
Then it is clear that $\tilde{\phi}$ is a homotopy equivalence. We get  that $( \widetilde{\mathfrak{F}}, \tilde{\phi})$ is a regular deformation of $\mathcal{E}$ on $\Delta$ and $J: ( \widetilde{\mathfrak{F}}, \tilde{\phi})\to (\mathfrak{F},\phi)$ is an equivalence. 
\end{proof}

\begin{rmk}
In the proof of Proposition \ref{prop: deformation is equivalent to a regular one} we require $X$ to be compact. This and the Ehresmann's fibration theorem are the two main reasons that we require $X$ to be compact in this paper.
\end{rmk}

We will denote $\widetilde{\chi_0}$ in \eqref{eq: conjugate} by $A^{F^{\bullet}\prime\prime}_t$ in order to emphasize that it is a superconnection on $F^{\bullet}$ over $X$, depending on the parameter $t\in \Delta$. Equation \eqref{eq: superconnection is holomorphic} indicates that $A^{F^{\bullet}\prime\prime}_t$ is holomorphic  with respect to $t$.

We can decompose 
\begin{equation}
A^{F^{\bullet}\prime\prime}_t=u_0(t)+\nabla^{F^{\bullet}\prime\prime}_t+u_2(t)+\ldots
\end{equation}
as before. It is clear that each $u_i(t)$ and $\nabla^{F^{\bullet}\prime\prime}_t$ are holomorphic with respect to $t$. In particular, $A^{F^{\bullet}\prime\prime}_0$ is the superconnection of the cohesive module $\mathfrak{F}|_{X\times \{0\}}\in B(X)$.

To simplify the notation we denote $\mathfrak{F}|_{X\times \{0\}}$ by $\mathfrak{F}_0$.
Recall that we have the DGLA  $L_{\mathfrak{F}_0}$ as given in Section \ref{subsection: DGLA LE}, and we consider the element
\begin{equation}
\eta(t):=A^{F^{\bullet}\prime\prime}_t-A^{F^{\bullet}\prime\prime}_0.
\end{equation}

\begin{prop}\label{prop: Maurer-Cartan in F}
The element $\eta(t)\in L^1_{\mathfrak{F}_0}\otimes \mathcal{O}(\Delta)$ satisfies
\begin{equation}\label{eq: Maurer-Cartan for F}
\diff_{\mathfrak{F}}\eta(t)+\frac{1}{2}[\eta(t),\eta(t)]=0
\end{equation}
and $\eta(0)=0$. In other words, $\eta(t)$ is a Maurer-Cartan element in $L^1_{\mathfrak{F}_0}\otimes \mathcal{M}(\Delta)$.
\end{prop}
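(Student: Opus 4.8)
The plan is to read off the Maurer--Cartan equation directly from the flatness of the regular model produced by Proposition~\ref{prop: deformation is equivalent to a regular one}. After replacing $(\mathfrak{F},\phi)$ by an equivalent regular deformation, its superconnection has the shape $A^{F^{\bullet}\pprime}_t+\dbar_\Delta$ with $A^{F^{\bullet}\pprime}_t$ carrying no $\overline{T^{*}\Delta}$-legs, and $(A^{F^{\bullet}\pprime}_t+\dbar_\Delta)^2=0$. Sorting this identity by the number of $\overline{T^{*}\Delta}$-legs (using \eqref{eq: decomposition X and Delta} and $\dbar_\Delta^2=0$) isolates two pieces: the leg-free part $A^{F^{\bullet}\pprime}_t\circ A^{F^{\bullet}\pprime}_t=0$ for every $t$, and the one-leg part $[A^{F^{\bullet}\pprime}_t,\dbar_\Delta]=0$. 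As already noted after Proposition~\ref{prop: deformation is equivalent to a regular one}, the second equation forces each coefficient $u_i(t)$, $\nabla^{F^{\bullet}\pprime}_t$ of $A^{F^{\bullet}\pprime}_t$ to be holomorphic in $t$, so that $\eta(t)=A^{F^{\bullet}\pprime}_t-A^{F^{\bullet}\pprime}_0$ depends holomorphically on $t$ and evidently $\eta(0)=0$. The remaining task is purely algebraic: rewrite $A^{F^{\bullet}\pprime}_t\circ A^{F^{\bullet}\pprime}_t=0$ in terms of $\eta(t)$.

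First I would check that $\eta(t)$ really is an element of $L^1_{\mathfrak{F}_0}$, not merely a difference of superconnections: the $\dbar_X$-connection parts of $A^{F^{\bullet}\pprime}_t$ and $A^{F^{\bullet}\pprime}_0$ differ by a $C^{\infty}(X)$-linear term in $\wedge^1\overline{T^{*}X}\hat\otimes\End^0(F^{\bullet})$, while the $v_i$-parts differ by terms in $\wedge^i\overline{T^{*}X}\hat\otimes\End^{1-i}(F^{\bullet})$; summing, $\eta(t)\in L^1_{\mathfrak{F}_0}$, so indeed $\eta(t)\in L^1_{\mathfrak{F}_0}\otimes\mathcal{M}(\Delta)$. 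Then, writing $A^{F^{\bullet}\pprime}_t=A^{F^{\bullet}\pprime}_0+\eta(t)$ and expanding the composition,
\begin{equation}
0=A^{F^{\bullet}\pprime}_t\circ A^{F^{\bullet}\pprime}_t=(A^{F^{\bullet}\pprime}_0)^2+\bigl(A^{F^{\bullet}\pprime}_0\,\eta(t)+\eta(t)\,A^{F^{\bullet}\pprime}_0\bigr)+\eta(t)\circ\eta(t).
\end{equation}
The first summand vanishes because $\mathfrak{F}_0\in B(X)$. Since $\eta(t)$ has total degree $1$, the differential $\diff_{\mathfrak{F}_0}$ of $L_{\mathfrak{F}_0}$ — induced by $A^{F^{\bullet}\pprime}_0$ as in \eqref{eq: differential in B(X)}, and written $\diff_{\mathfrak{F}}$ in \eqref{eq: Maurer-Cartan for F} — satisfies $\diff_{\mathfrak{F}_0}\eta(t)=A^{F^{\bullet}\pprime}_0\,\eta(t)+\eta(t)\,A^{F^{\bullet}\pprime}_0$, while \eqref{eq: supercommutator} gives $[\eta(t),\eta(t)]=2\,\eta(t)\circ\eta(t)$. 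Substituting these two identifications turns the displayed equation into $\diff_{\mathfrak{F}_0}\eta(t)+\tfrac12[\eta(t),\eta(t)]=0$, which is exactly \eqref{eq: Maurer-Cartan for F}.

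I do not expect a genuine obstacle here: the argument is a re-bracketing of an already-established flatness identity. The only points that need care are the graded-sign bookkeeping — in particular that $|\eta(t)|=1$, so that $\diff_{\mathfrak{F}_0}$ acts on $\eta(t)$ as an anticommutator rather than a commutator — and the degreewise verification that the difference of the two superconnections lands in $L_{\mathfrak{F}_0}$. A secondary point worth spelling out is the translation between "holomorphic family in $L_{\mathfrak{F}_0}$ vanishing at $0$", in the sense of the tensor product $L_{\mathfrak{F}_0}\otimes\mathcal{M}(\Delta)$ used throughout, and the concrete statement that each $u_i(t)-u_i(0)$ and $\nabla^{F^{\bullet}\pprime}_t-\nabla^{F^{\bullet}\pprime}_0$ is holomorphic in $t$ with value $0$ at $t=0$; this is immediate from the remarks preceding the proposition but should be recorded.
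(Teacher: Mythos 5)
Your proposal is correct and follows the same route as the paper: the paper's proof likewise notes that $\eta(t)=A^{F^{\bullet}\prime\prime}_t-A^{F^{\bullet}\prime\prime}_0$ lies in $L^1_{\mathfrak{F}_0}\otimes\mathcal{O}(\Delta)$ with $\eta(0)=0$ and derives \eqref{eq: Maurer-Cartan for F} directly from $(A^{F^{\bullet}\prime\prime}_t)^2=0$, merely leaving implicit the expansion and sign bookkeeping (anticommutator for degree $1$, $[\eta,\eta]=2\,\eta\circ\eta$) that you spell out.
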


\begin{rmk}\label{rmk: the topological tensor product}
Here $L^1_{\mathfrak{F}_0}\otimes \mathcal{O}(\Delta)$ and $L^1_{\mathfrak{F}_0}\otimes \mathcal{M}(\Delta)$ are completed tensor products, i.e. an element  in $L^1_{\mathfrak{F}_0}\otimes \mathcal{O}(\Delta)$ is an element $\eta(t)\in \bigoplus_{i+j=1}C^{\infty}(X\times \Delta,\wedge^{i}\overline{T^{*}X}\hat{\otimes} \End^{j}(F^{\bullet}))$ which is holomorphic in the $\Delta$ direction. Moreover $\eta(t)\in L^1_{\mathfrak{F}_0}\otimes \mathcal{O}(\Delta)$ means that in addition $\eta(0)\equiv 0$.
\end{rmk}

\begin{proof}[Proof of Proposition \ref{prop: Maurer-Cartan in F}]
It is clear that $\eta(t)=A^{F^{\bullet}\prime\prime}_t-A^{F^{\bullet}\prime\prime}_0$ belongs to  $L^1_{\mathfrak{F}_0}$ for each $t$, and $\eta(0)=0$. Since 
$A^{F^{\bullet}\prime\prime}_t$ is holomorphic  with respect to $t$, $\eta(t)$ is also holomorphic with respect to $t$. Hence $\eta(t)\in L^1_{\mathfrak{F}_0}\otimes \mathcal{O}(\Delta)$.

Since $A^{F^{\bullet}\prime\prime}_t=A^{F^{\bullet}\prime\prime}_0+\eta(t)$, Equation \eqref{eq: Maurer-Cartan for F} follows from the fact that $(A^{F^{\bullet}\prime\prime}_t)^2=0$. 
\end{proof}

Recall that for a DGLA $\mathfrak{g}$ and a $\mathbb{C}$-algebra $\mathcal{R}$ with maximal ideal $m$, two Maurer-Cartan elements $\alpha_1$ and $\alpha_2$ in  $\mathfrak{g}^1\otimes m$ are called gauge equivalent if there exists a $u\in\mathfrak{g}^0\otimes m$ such that
\begin{equation}\label{eq: gauge equivalent}
e^u\circ (\diff_{\mathfrak{g}}+\alpha_1)\circ e^{-u}=\diff_{\mathfrak{g}}+\alpha_2
\end{equation}
given that $e^u$ converges in $\mathfrak{g}^0\otimes \mathcal{R}$. Let MC$(\mathfrak{g}\otimes m)$ denote the set of gauge equivalent classes of Maurer-Cartan elements of $\mathfrak{g}\otimes m$. 

In this paper we take $\mathcal{R}=\mathcal{O}(\Delta)$ and $m=\mathcal{M}(\Delta)$. We have the following result.

\begin{coro}\label{coro: deformation gives Maurer-Cartan element in E}
A deformation of $\mathcal{E}$ over sufficiently small $\Delta$ gives an element in MC$(L_{\mathcal{E}}\otimes \mathcal{M}(\Delta))$.
\end{coro}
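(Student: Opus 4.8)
The plan is to run the corollary through the three facts just established. Given a deformation $(\mathfrak{F},\phi)$ of $\mathcal{E}$ over $\Delta$, I would first shrink $\Delta$ and apply Proposition~\ref{prop: deformation is equivalent to a regular one} to replace $(\mathfrak{F},\phi)$ by an equivalent \emph{regular} deformation; for such a deformation $\mathfrak{F}_0:=\mathfrak{F}|_{X\times\{0\}}$ is an honest cohesive module on $X$ and $A^{F^{\bullet}\prime\prime}_t$ is a holomorphic family of $\dbar_X$-superconnections on $F^{\bullet}$. Proposition~\ref{prop: Maurer-Cartan in F} then supplies the Maurer--Cartan element $\eta(t):=A^{F^{\bullet}\prime\prime}_t-A^{F^{\bullet}\prime\prime}_0$ of $L_{\mathfrak{F}_0}\otimes\mathcal{M}(\Delta)$. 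What remains is to transport this Maurer--Cartan class from $L_{\mathfrak{F}_0}$ to $L_{\mathcal{E}}$, which is possible because $\phi$ exhibits $\mathfrak{F}_0$ and $\mathcal{E}$ as homotopy equivalent.

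For the transport I would invoke Lemma~\ref{lemma: homotopy equivalent L infty equivalent of dgla}: choosing a homotopy inverse $\psi:\mathcal{E}\to\mathfrak{F}_0$ of $\phi$ together with a homotopy operator $h\in\Hom^{-1}_{B(X)}(\mathfrak{F}_0,\mathfrak{F}_0)$ produces an explicit $L_\infty$ quasi-isomorphism $\Phi:L_{\mathfrak{F}_0}\to L_{\mathcal{E}}$, with $\Phi_n$ as in \eqref{eq: L infty morphism for cohesive module equivalence}. An $L_\infty$ morphism pushes Maurer--Cartan elements forward by $\Phi_*\eta:=\sum_{n\geq1}\frac{1}{n!}\Phi_n(\eta,\dots,\eta)$, and by the shape of $\Phi_n$ this collapses, up to signs, to the geometric-type expression $\psi\,\eta\,(\id-h\eta)^{-1}\,\phi$. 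The key point is that this sum actually defines an element of $L^1_{\mathcal{E}}\otimes\mathcal{M}(\Delta)$: since $\eta(0)=0$ and $X$ is compact, fixing norms on the finitely many bundles $\wedge^{i}\overline{T^{*}X}\hat{\otimes}\End^{j}(F^{\bullet})$ makes the operator norm of $h\eta(t)$ tend to $0$ as $t\to0$, so after further shrinking $\Delta$ the series converges uniformly, depends holomorphically on $t$, and vanishes at $t=0$.

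It then remains to check two formal statements. First, $\Phi_*\eta$ again satisfies the Maurer--Cartan equation in $L_{\mathcal{E}}$; this is the standard computation one gets by substituting $x_i=\eta$ into the $L_\infty$ relations \eqref{eq: L infty morphism} and using $\Phi_1\circ\diff_{\mathfrak{F}_0}=\diff_{\mathcal{E}}\circ\Phi_1$. Second, the resulting class in $\mathrm{MC}(L_{\mathcal{E}}\otimes\mathcal{M}(\Delta))$ is independent of the choices: an $L_\infty$ morphism carries gauge-equivalent Maurer--Cartan elements to gauge-equivalent ones, two choices of $(\psi,h)$ give $L_\infty$-homotopic $\Phi$'s and hence the same map on Maurer--Cartan moduli, and the regular representative of Proposition~\ref{prop: deformation is equivalent to a regular one} is unique up to equivalence; together these show $[\Phi_*\eta]$ depends only on the equivalence class of $(\mathfrak{F},\phi)$.

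The main obstacle is the analytic one in the second step: unlike in the purely formal (Artinian) setting, $\mathcal{M}(\Delta)$ is not nilpotent, so one genuinely has to prove that the $L_\infty$ pushforward converges in the holomorphic category, which requires uniform estimates over the compact $X$ for the operators built from $\psi$, $h$, $\phi$ and $\eta(t)$ and a shrinking of $\Delta$ — this is where compactness of $X$ and the ``germ of family'' convention are used. The remaining ingredients (preservation of the Maurer--Cartan equation and of gauge equivalence, and the sign bookkeeping in \eqref{eq: L infty morphism}) are routine but should be carried out carefully.
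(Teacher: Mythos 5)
Your proposal follows essentially the same route as the paper's proof: reduce to a regular deformation, extract the Maurer--Cartan element $\eta(t)\in L_{\mathfrak{F}_0}\otimes\mathcal{M}(\Delta)$ from Proposition \ref{prop: Maurer-Cartan in F}, and push it forward along the $L_\infty$ quasi-isomorphism $\Phi$ of Lemma \ref{lemma: homotopy equivalent L infty equivalent of dgla} via $\sum_{n\geq 1}\frac{1}{n!}\Phi_n(\eta,\dots,\eta)$, which collapses to exactly the geometric series you wrote; the paper simply cites \cite[Proposition 8.2]{yekutieli2012mc} for the fact that the pushforward is again Maurer--Cartan. The only remark worth making is that the corollary as stated does not require independence of the choices of $\psi$ and $h$ --- the paper explicitly defers that question to the later discussion of strong deformations --- so your final step, while reasonable, is more than is needed here.
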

\begin{proof}
Proposition \ref{prop: Maurer-Cartan in F} tells us that a deformation $\mathfrak{F}$ gives an element in MC$(L_{\mathfrak{F}_0}\otimes \mathcal{M}(\Delta))$. Since $\phi: \mathfrak{F}_0\to \mathcal{E}$ is a homotopy equivalence, we can choose a homotopy inverse $\psi: \mathcal{E}\to \mathfrak{F}_0$ and a homotopy operator $h\in \Hom_{B(X)}(\mathfrak{F}_0,\mathfrak{F}_0)$ such that 
$$
\psi\phi-\id_{F^{\bullet}}=A^{F^{\bullet}\prime\prime}_0 h+hA^{F^{\bullet}\prime\prime}_0.
$$
Lemma \ref{lemma: homotopy equivalent L infty equivalent of dgla} tells us that from $\phi$, $\psi$, and $h$ we can construct an $L_{\infty}$ quasi-isomorphism
\begin{equation}
\Phi: L_{\mathfrak{F}_0}\otimes \mathcal{M}(\Delta)\to L_{\mathcal{E}}\otimes \mathcal{M}(\Delta).
\end{equation}

Now as in \cite[Section 8]{yekutieli2012mc}, for a Maurer-Cartan element $\eta(t)\in L_{\mathfrak{F}_0}\otimes \mathcal{M}(\Delta)$. We can construct a element
\begin{equation}
\text{MC}(\Phi,\eta(t)):=\sum_{n\geq 1}\frac{1}{n!}\Phi_n(\eta(t),\ldots,\eta(t)).
\end{equation}
Actually for the $\Phi$ given in \eqref{eq: L infty morphism for cohesive module equivalence}, we have
\begin{equation}\label{eq: MC Phi eta explicit}
\text{MC}(\Phi,\eta(t))=\sum_{n\geq 1}\underbrace{\psi \eta(t) h\eta(t)h\ldots h\eta(t)\phi}_{n ~\eta(t)\text{'s}}.
\end{equation}
\cite[Proposition 8.2]{yekutieli2012mc} assures that $\text{MC}(\Phi,\eta(t))$ is a Maurer-Cartan element in $L_{\mathcal{E}}\otimes \mathcal{M}(\Delta)$.
\end{proof}

\begin{rmk}
\cite[Theorem 8.13]{yekutieli2012mc} further tells us that $\text{MC}(\Phi,-)$ gives a bijection from MC$(L_{\mathfrak{F}_0}\otimes \mathcal{M}(\Delta))$ to MC$(L_{\mathcal{E}}\otimes \mathcal{M}(\Delta))$.
\end{rmk}

Although  Corollary \ref{coro: deformation gives Maurer-Cartan element in E} is useful, we still have the following problems:
\begin{enumerate}
\item We do not know the dependence of $\text{MC}(\Phi,-)$ on the choices of $\psi$ and $h$;
\item We do not know the relation between equivalence of deformations and gauge equivalence of Maurer-Cartan elements.
\end{enumerate}

To solve these problems, we need to further investigate equivalences of deformations.

Let $\theta: \mathfrak{F}\to \mathfrak{G}$ be an equivalence between regular deformations of $\mathcal{E}$ over $\Delta$. According to \eqref{eq: decomposition of morphism}, \eqref{eq: wedge s t}, and \eqref{eq: decomposition X and Delta}, $\theta$ decomposes into a finite sum
\begin{equation}\label{eq: decompose theta into two directions}
\theta=\sum_{i,j\geq 0}\theta^{i,j}
\end{equation}
where
\begin{equation}
\theta^{i,j}\in C^{\infty}(X\times\Delta,\wedge^{i,j}\overline{T^{*}(X\times \Delta)}\hat{\otimes} \Hom^{-i-j}(F^{\bullet},G^{\bullet})).
\end{equation}
Moreover let
\begin{equation}\label{eq: theta decompose Delta direction}
\theta^{\bullet, j}:=\sum_{r} \theta^{i,j}.
\end{equation}

\begin{defi}\label{defi: regular equivalence}
An equivalence $\theta: \mathfrak{F}\to \mathfrak{G}$ between regular deformations is called a \emph{regular equivalence} if $\theta^{\bullet, j}=0$ for all $j>0$.
\end{defi}

Conceptually, a regular deformation is a family of cohesive modules on $X$ whose superconnection varies holomorphically with respect to $t\in \Delta$, and a regular equivalence is a family of homotopy equivalences on $X$ which varies holomorphically with respect to $t\in \Delta$.

\begin{prop}\label{prop: equivalence and regular equivalence}
Any equivalence $\theta: \mathfrak{F}\to \mathfrak{G}$ between regular deformations is cochain homotopic to a regular equivalence over a sufficiently small neighborhood $\Delta$ of $0$.
\end{prop}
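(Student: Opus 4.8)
The plan is to remove the $\Delta$-antiholomorphic components of $\theta$ one form-degree at a time by a cochain homotopy, using the $\dbar$-Poincar\'e lemma on the ball $\Delta$ with coefficients in a Fr\'echet space. Write the superconnections of the two regular deformations as $A^{F^{\bullet}\prime\prime}=A^{F^{\bullet}\prime\prime}_t+\dbar_{\Delta}$ and $A^{G^{\bullet}\prime\prime}=A^{G^{\bullet}\prime\prime}_t+\dbar_{\Delta}$, where $A^{F^{\bullet}\prime\prime}_t$ and $A^{G^{\bullet}\prime\prime}_t$ are pure of $\Delta$-antiholomorphic form-degree $0$. Decompose $\theta=\sum_{j\geq 0}\theta^{\bullet,j}$ as in \eqref{eq: theta decompose Delta direction} and let $N$ be the largest $j$ with $\theta^{\bullet,j}\neq 0$; if $N=0$ then $\theta$ is already regular, so assume $N\geq 1$.

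First I would extract the top-degree consequence of closedness. Since $d\theta=A^{G^{\bullet}\prime\prime}\theta-\theta A^{F^{\bullet}\prime\prime}=0$, and $A^{F^{\bullet}\prime\prime}_t,A^{G^{\bullet}\prime\prime}_t$ preserve $\Delta$-antiholomorphic form-degree while $\dbar_{\Delta}$ raises it by one, the part of $d\theta$ of $\Delta$-degree $N+1$ is $\dbar_{\Delta}\theta^{\bullet,N}-\theta^{\bullet,N}\dbar_{\Delta}$; hence $\theta^{\bullet,N}$ is closed for the Dolbeault operator $\dbar^{\Hom}_{\Delta}$ of $\Hom^{\bullet}(F^{\bullet},G^{\bullet})$ in the $\Delta$-direction. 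Because $\mathfrak{F}$ and $\mathfrak{G}$ are regular, this operator is just the de Rham $\dbar$ on $\Delta$ with coefficients in the fixed Fr\'echet space $C^{\infty}(X,\wedge^{\bullet}\overline{T^{*}X}\hat{\otimes}\Hom^{\bullet}(F^{\bullet},G^{\bullet}))$ (no curvature in the $\Delta$-direction), so, shrinking $\Delta$ if necessary, the Dolbeault--Grothendieck lemma for Fr\'echet-valued forms yields $h^{\bullet,N-1}$ of total degree $-1$ and $\Delta$-degree $N-1$ with $\dbar_{\Delta}h^{\bullet,N-1}+h^{\bullet,N-1}\dbar_{\Delta}=-\theta^{\bullet,N}$.

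Next I would set $\theta^{(1)}:=\theta+\bigl(A^{G^{\bullet}\prime\prime}h^{\bullet,N-1}+h^{\bullet,N-1}A^{F^{\bullet}\prime\prime}\bigr)$. This is again a closed degree-$0$ morphism, it is cochain homotopic to $\theta$ in the morphism complex of $B(X\times\Delta)$ and therefore induces the same isomorphism in $\underline{B}(X\times\Delta)$, and the correction term has $\Delta$-degrees only $N-1$ and $N$, its $\Delta$-degree-$N$ part being exactly $\dbar_{\Delta}h^{\bullet,N-1}+h^{\bullet,N-1}\dbar_{\Delta}=-\theta^{\bullet,N}$; hence $\theta^{(1)}$ has top $\Delta$-degree at most $N-1$. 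Applying the same computation to $\theta^{(1)}$ (whose top-degree part is again $\dbar^{\Hom}_{\Delta}$-closed by closedness of $\theta^{(1)}$) and iterating $N$ times produces a morphism $\theta^{(N)}$ with $\theta^{(N),\bullet,j}=0$ for all $j>0$, i.e.\ a regular equivalence, together with a cochain homotopy from $\theta$ to $\theta^{(N)}$. It remains to check compatibility with restriction to $X\times\{0\}$: all the homotopies used except possibly the last one have $\Delta$-degree $\geq 1$ and so restrict to $0$ on $X\times\{0\}$, while the last has $\Delta$-degree $0$ and restricts to a cochain homotopy in $B(X)$; in either case $\theta^{(N)}|_{X\times\{0\}}$ is chain homotopic to $\theta|_{X\times\{0\}}$ in $B(X)$, so $\psi\circ\theta^{(N)}|_{X\times\{0\}}$ is still chain homotopic to $\phi$. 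Thus $\theta^{(N)}$ is an equivalence of deformations in the sense of Definition \ref{defi: deformation of cohesive modules}, and a regular one, as required.

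The step I expect to be the main obstacle is the second paragraph: keeping the signs straight so that the top-degree part of $d\theta=0$ is genuinely the equation $\dbar^{\Hom}_{\Delta}(\theta^{\bullet,N})=0$, and verifying that regularity of $\mathfrak{F}$ and $\mathfrak{G}$ forces $\dbar^{\Hom}_{\Delta}$ to be the trivial pulled-back $\dbar$ in the $\Delta$-direction, so that nothing obstructs the Fr\'echet-valued $\dbar$-Poincar\'e lemma. Conceptually, what is being proved is that the inclusion of the complex of holomorphic families over $\Delta$ of morphisms on $X$ into the morphism complex $\Hom_{B(X\times\Delta)}(\mathfrak{F},\mathfrak{G})$ is a quasi-isomorphism, the quotient being a $\dbar_{\Delta}$-acyclic complex concentrated in positive $\Delta$-degree; the recursion above is the explicit contracting homotopy witnessing this, and the rest of the argument is routine bookkeeping.
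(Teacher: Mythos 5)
Your proof is correct and follows essentially the same route as the paper: decompose $\theta$ by $\Delta$-antiholomorphic form-degree, observe that the top component is $\dbar_{\Delta}$-closed, invoke the vanishing of Dolbeault cohomology on a small ball, and descend inductively, recording the solutions as a cochain homotopy. The only difference is organizational --- you subtract one coboundary per step so that closedness of the needed primitive is automatic at each stage, whereas the paper solves the whole zig-zag system $\dbar_{\Delta}(\gamma^j)+D_t(\gamma^{j+1})=\theta^{\bullet,j+1}$ at once and sums the $\gamma^j$ into a single homotopy.
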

\begin{proof}
Since both $\mathfrak{F}$ and $\mathfrak{G}$ are regular, we can write
$$
\mathfrak{F}=(F^{\bullet}, A^{F^{\bullet}\prime\prime}_t+\dbar_{\Delta})
$$
and
$$
\mathfrak{G}=(G^{\bullet}, A^{G^{\bullet}\prime\prime}_t+\dbar_{\Delta}).
$$
Since $\theta: \mathfrak{F}\to \mathfrak{G}$ is a closed, degree $0$ morphism, we have
\begin{equation}\label{eq: theta is closed}
(A^{G^{\bullet}\prime\prime}_t+\dbar_{\Delta})\theta=\theta(A^{F^{\bullet}\prime\prime}_t+\dbar_{\Delta}),
\end{equation}
To simplify the notation, let us denote the commutator $A^{G^{\bullet}\prime\prime}_t(\cdot)-(\cdot)A^{F^{\bullet}\prime\prime}_t$ by $D_t(\cdot)$. It is clear that
\begin{equation}\label{eq: dbar and DX commute}
\dbar_{\Delta}D_t+D_t\dbar_{\Delta}=0.
\end{equation}

Under the decomposition in \eqref{eq: theta decompose Delta direction},  Equation \eqref{eq: theta is closed} decomposes into
\begin{equation}\label{eq: DX theta 0}
D_t(\theta^{\bullet, 0})=0
\end{equation}
and
\begin{equation}\label{eq: two direction closeness of theta}
\dbar_{\Delta}(\theta^{\bullet, j})+D_t(\theta^{\bullet, j+1})=0 \text{, for all } j\geq 0.
\end{equation}
Since the sum in \eqref{eq: decompose theta into two directions} is finite, there exists an $N$ such that 
$$
\theta^{\bullet, j}=0 \text{, for all } j>N.
$$
Therefore \eqref{eq: two direction closeness of theta} gives
\begin{equation}
\dbar_{\Delta}(\theta^{\bullet, N})=0.
\end{equation}

For $\Delta$ sufficiently small, the Dolbeault cohomology with respect to $\dbar_{\Delta}$ vanishes on $\Delta$. Hence we can find
 $$\gamma^{N-1}\in\bigoplus_i C^{\infty}(X\times\Delta,\wedge^{i,N-1}\overline{T^{*}(X\times \Delta)}\hat{\otimes} \Hom^{-i-N}(F^{\bullet},G^{\bullet}))
$$ such that
\begin{equation}
\dbar_{\Delta}(\gamma^{N-1})=\theta^{\bullet, N}.
\end{equation}
Along the same lines we can find recursively 
\begin{equation}
\gamma^j\in \bigoplus_i C^{\infty}(X\times\Delta,\wedge^{i,j}\overline{T^{*}(X\times \Delta)}\hat{\otimes} \Hom^{-i-j-1}(F^{\bullet},G^{\bullet})), ~j=N-2,\ldots, 0
\end{equation}
such that
\begin{equation}\label{eq: gamma zigzag}
\dbar_{\Delta}(\gamma^j)+D_t(\gamma^{j+1})=\theta^{\bullet, j+1}, ~j=N-2,\ldots, 0.
\end{equation}
In particular we have
\begin{equation}\label{eq: gamma 0}
\dbar_{\Delta}(\gamma^0)+D_t(\gamma^1)=\theta^{\bullet, 1}.
\end{equation}
Let $\tilde{\theta}:=\theta^{\bullet, 0}-D_t(\gamma^0)$. Equation \eqref{eq: dbar and DX commute}, \eqref{eq: DX theta 0}, \eqref{eq: two direction closeness of theta}, and \eqref{eq: gamma 0} tell us 
\begin{equation}
\dbar_{\Delta}(\tilde{\theta})=0,\text{ and } D_t(\tilde{\theta})=0,
\end{equation}
i.e. $\tilde{\theta}: \mathfrak{F}\to \mathfrak{G}$ is a closed, degree $0$ morphism. 
Moreover, by \eqref{eq: gamma zigzag},  $\gamma:=\sum_{i=0}^{N-1}\gamma^i$ is a cochain homotopy between $\tilde{\theta}$ and $\theta$. Therefore $\tilde{\theta}$ is a homotopy equivalence because $\theta$ is. Hence $\tilde{\theta}$ a regular equivalence between  $\mathfrak{F}$ and $\mathfrak{G}$.
\end{proof}

\subsection{Strong deformations of cohesive modules}\label{subsection: strong deformations}
Recall that we are deforming a cohesive module $\mathcal{E}\in B(X)$. Let $\mathcal{E}=(E^{\bullet}, A^{E^{\bullet}\prime\prime})$. We have the DGLA
$L_{\mathcal{E}}$ as given in Section \ref{subsection: DGLA LE}.

We want to find the relation between equivalences of deformations of  $\mathcal{E}$ and gauge equivalences of Maurer-Cartan elements in $L_{\mathcal{E}}$.  For this purpose we introduce the following concept. Recall that $p: X\times \Delta\to X$ is the projection. We denote the pull-back bundle  $p^*(E^{\bullet})$ still by $E^{\bullet}$.

\begin{defi}\label{defi: strong deformation}
For a cohesive module  $\mathcal{E}=(E^{\bullet}, A^{E^{\bullet}\prime\prime})\in B(X)$, a family of \emph{strong deformations} of $\mathcal{E}$ is a cohesive module on $X\times \Delta$ of the form
\begin{equation}
\mathfrak{E}=(E^{\bullet}, A^{E^{\bullet}\prime\prime}+\epsilon(t)+\dbar_{\Delta}).
\end{equation}
Here   $\dbar_{\Delta}$ is the flat $\dbar$-operator on $E^{\bullet}$ in the $\Delta$ direction,  $\epsilon(t)$ is holomorphic with respect to $\dbar_{\Delta}$ and $\epsilon(0)=0$.

We require that the morphism $\phi: \mathfrak{E}|_{X\times \{0\}}\to \mathcal{E}$ to be the identity morphism $\id_{E^{\bullet}}$.
\end{defi}

\begin{rmk}
It is clear that a strong deformation is regular as in Definition \ref{defi: regular deformation}.
\end{rmk}

\begin{prop}\label{prop: Maurer-Cartan in E}
The element $\epsilon(t)\in L^1_{\mathcal{E}}\otimes \mathcal{O}(\Delta)$ satisfies
\begin{equation}\label{eq: Maurer-Cartan for E}
\diff_{\mathcal{E}}\epsilon(t)+\frac{1}{2}[\epsilon(t),\epsilon(t)]=0
\end{equation}
and $\epsilon(0)=0$. In other words, $\epsilon(t)$ is a Maurer-Cartan element in $L^1_{\mathcal{E}}\otimes\mathcal{M}(\Delta)$. Here $L^1_{\mathcal{E}}\otimes \mathcal{O}(\Delta)$ and $L^1_{\mathcal{E}}\otimes \mathcal{O}(\Delta)$ are completed tensor products as in  Remark \ref{rmk: the topological tensor product}.
\end{prop}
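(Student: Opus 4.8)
The plan is to follow the proof of Proposition~\ref{prop: Maurer-Cartan in F} essentially verbatim; the only work is degree and sign bookkeeping together with two structural cross-term vanishings. First I would check that $\epsilon(t)$ lives where it is claimed to. The operator $A^{E^{\bullet}\prime\prime}+\epsilon(t)+\dbar_{\Delta}$ is a superconnection of total degree $1$ on the pull-back bundle $p^*E^{\bullet}$, and both $A^{E^{\bullet}\prime\prime}$ and $\dbar_{\Delta}$ already have total degree $1$; since by Definition~\ref{defi: strong deformation} the term $\epsilon(t)$ has no component along $\overline{T^{*}\Delta}$, for each fixed $t$ it lies in $\bigoplus_{i+j=1}C^{\infty}(X,\wedge^{i}\overline{T^{*}X}\hat{\otimes}\End^{j}(E^{\bullet}))=L^1_{\mathcal{E}}$. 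Holomorphic dependence of $\epsilon(t)$ on $t$ (i.e.\ $\dbar_{\Delta}$-closedness of its coefficients) together with $\epsilon(0)=0$ then places $\epsilon(t)$ in $L^1_{\mathcal{E}}\otimes\mathcal{M}(\Delta)$ in the sense of Remark~\ref{rmk: the topological tensor product}.

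Next I would expand the flatness identity $(A^{E^{\bullet}\prime\prime}+\epsilon(t)+\dbar_{\Delta})^2=0$. Writing $X=A^{E^{\bullet}\prime\prime}$, $Y=\epsilon(t)$, $Z=\dbar_{\Delta}$, all of odd total degree, the square unpacks as
\[
X^2+Y^2+Z^2+[X,Y]+[X,Z]+[Y,Z]=0,
\]
where $[-,-]$ is the supercommutator. Here $X^2=0$ because $\mathcal{E}$ is a cohesive module, $Z^2=0$ because $\dbar_{\Delta}$ is flat, $[X,Z]=0$ because $A^{E^{\bullet}\prime\prime}$ is pulled back from $X$ and $\dbar_{\Delta}$ differentiates only in the $\Delta$ direction, and $[Y,Z]=\dbar_{\Delta}(\epsilon(t))=0$ precisely because $\epsilon(t)$ is holomorphic in $t$. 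What remains is $[X,Y]+Y^2=0$. By \eqref{eq: differential in B(X)} and the fact that $\epsilon(t)$ has degree $1$ one has $[X,Y]=A^{E^{\bullet}\prime\prime}\epsilon(t)+\epsilon(t)A^{E^{\bullet}\prime\prime}=\diff_{\mathcal{E}}\epsilon(t)$ (cf.\ \eqref{eq: diff E in LE shorter}), while $Y^2=\tfrac{1}{2}[\epsilon(t),\epsilon(t)]$ since $\epsilon(t)$ is odd and $[-,-]$ is the supercommutator \eqref{eq: supercommutator}. This is exactly \eqref{eq: Maurer-Cartan for E}, and $\epsilon(0)=0$ is immediate from Definition~\ref{defi: strong deformation}.

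The only genuinely structural points, as opposed to routine sign-tracking, are the two cross-term vanishings $[A^{E^{\bullet}\prime\prime},\dbar_{\Delta}]=0$ and $[\epsilon(t),\dbar_{\Delta}]=0$. The first encodes that the superconnection in question is $p^*A^{E^{\bullet}\prime\prime}$ plus the trivial $\dbar$-operator in the $\Delta$ direction, so that the two commute as operators; the second is precisely the analytic meaning of ``$\epsilon(t)$ holomorphic in $t$'', namely $\dbar_{\Delta}$ annihilates the coefficients of $\epsilon(t)$. Once these are in hand there is nothing further to do, which is why the statement is short and sits right after the definition of strong deformations; if anything were to be an obstacle it would be keeping the Koszul signs from \eqref{eq: composition of morphisms in B(X)} and \eqref{eq: supercommutator} straight when identifying $[X,Y]$ with $\diff_{\mathcal{E}}\epsilon(t)$, but this is already built into the definition of $L_{\mathcal{E}}$.
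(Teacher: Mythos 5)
Your proposal is correct and follows essentially the same route as the paper, whose proof is simply ``same as Proposition \ref{prop: Maurer-Cartan in F}'': there, flatness $(A^{E^{\bullet}\prime\prime}+\epsilon(t))^2=0$ at each fixed $t$, together with holomorphicity in $t$ (both built into Definition \ref{defi: strong deformation}), immediately yields \eqref{eq: Maurer-Cartan for E}. Your expansion of $(A^{E^{\bullet}\prime\prime}+\epsilon(t)+\dbar_{\Delta})^2=0$ into bidegree components is just a more explicit rendering of that same argument.
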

\begin{proof}
Same as Proposition \ref{prop: Maurer-Cartan in F}.
\end{proof}

The following proposition shows the relation between strong deformation and general deformation, which is a counterpart of Corollary \ref{coro: deformation gives Maurer-Cartan element in E} on the level of deformations instead of Maurer-Cartan elements.

\begin{prop}\label{prop: deformation and strong deformation}
Let $\mathcal{E}$ be a cohesive module on a compact complex manifold $X$. Any deformation $\mathfrak{F}$ of $\mathcal{E}$ on $\Delta$ is equivalent to a strong deformation of of $\mathcal{E}$ if $\Delta$ is sufficiently small.
\end{prop}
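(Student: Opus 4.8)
The plan is to chain two reductions. First, Proposition~\ref{prop: deformation is equivalent to a regular one} lets me replace $\mathfrak{F}$ by an equivalent \emph{regular} deformation, so I may assume $\mathfrak{F}=(F^{\bullet},A^{F^{\bullet}\prime\prime}_t+\dbar_{\Delta})$, where $A^{F^{\bullet}\prime\prime}_t$ is a $\dbar_X$-superconnection on $F^{\bullet}$ depending holomorphically on $t\in\Delta$, and where the structure morphism $\phi\colon \mathfrak{F}_0=(F^{\bullet},A^{F^{\bullet}\prime\prime}_0)\to\mathcal{E}$ is a homotopy equivalence of cohesive modules on $X$. What remains is ``Problem~2'' from the introduction: the underlying bundle is $F^{\bullet}$, not $E^{\bullet}$, and $\phi$ is invertible only up to homotopy, so I must transport the whole family along $\phi$. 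I fix a homotopy inverse $\psi\colon\mathcal{E}\to\mathfrak{F}_0$ and $h\in\Hom^{-1}_{B(X)}(\mathfrak{F}_0,\mathfrak{F}_0)$ with $\psi\phi-\id_{F^{\bullet}}=A^{F^{\bullet}\prime\prime}_0 h+hA^{F^{\bullet}\prime\prime}_0$, and, by the standard modification, arrange the side conditions $h^2=0$, $\phi h=0$, $h\psi=0$, so that $(\phi,\psi,h)$ is a deformation-retract datum between $\mathfrak{F}_0$ and $\mathcal{E}$.

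Next I would define the target strong deformation. By Proposition~\ref{prop: Maurer-Cartan in F}, $\eta(t):=A^{F^{\bullet}\prime\prime}_t-A^{F^{\bullet}\prime\prime}_0$ is a Maurer--Cartan element of $L^1_{\mathfrak{F}_0}\otimes\mathcal{M}(\Delta)$, and via the $L_{\infty}$ quasi-isomorphism $\Phi\colon L_{\mathfrak{F}_0}\otimes\mathcal{M}(\Delta)\to L_{\mathcal{E}}\otimes\mathcal{M}(\Delta)$ of Corollary~\ref{coro: deformation gives Maurer-Cartan element in E} I set $\epsilon(t):=\text{MC}(\Phi,\eta(t))$, the Maurer--Cartan element of $L^1_{\mathcal{E}}\otimes\mathcal{M}(\Delta)$ given by \eqref{eq: MC Phi eta explicit}. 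I then put $\mathfrak{E}:=(E^{\bullet},A^{E^{\bullet}\prime\prime}+\epsilon(t)+\dbar_{\Delta})$: it lies in $B(X\times\Delta)$ because $(A^{E^{\bullet}\prime\prime}+\epsilon(t))^2=0$ (Maurer--Cartan) and $\epsilon(t)$ is holomorphic in $t$, so $\dbar_{\Delta}$ still anticommutes with everything in the $X$-direction; and since $\epsilon(0)=0$ and the structure morphism is $\id_{E^{\bullet}}$, this is a strong deformation of $\mathcal{E}$ in the sense of Definition~\ref{defi: strong deformation}.

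The heart of the argument is to produce an equivalence of deformations $\theta\colon(\mathfrak{F},\phi)\to(\mathfrak{E},\id_{E^{\bullet}})$. The same homotopy data $(\phi,\psi,h)$ that build $\Phi$ also give, by the homological perturbation lemma applied to the perturbation $\eta(t)$ of $A^{F^{\bullet}\prime\prime}_0$, a family of maps $\theta_t\colon(F^{\bullet},A^{F^{\bullet}\prime\prime}_t)\to(E^{\bullet},A^{E^{\bullet}\prime\prime}+\epsilon(t))$ --- a convergent series whose terms alternate $\eta(t)$ and $h$, capped by $\phi$ --- which intertwine the two superconnections and satisfy $\theta_0=\phi$ (because $\eta(0)=0$). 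I regard $\theta:=\theta_t$ as a degree-$0$ morphism on $X\times\Delta$ with no $\dbar_{\Delta}$-component; its closedness splits into the $X$-direction intertwining relation, which is the perturbation-lemma identity, and the $\Delta$-direction identity $\dbar_{\Delta}\theta=\theta\dbar_{\Delta}$, which holds because $\theta_t$ is holomorphic in $t$ (the mixed terms vanish). Finally $\theta$ is a homotopy equivalence: its degree-$0$ component restricts over $t=0$ to $\phi^0$, a quasi-isomorphism by Proposition~\ref{prop: homotopy equiv degree 0}; acyclicity of the mapping cone of a morphism of complexes of $C^{\infty}$ vector bundles over the compact $X$ is an open condition in $t$, so after shrinking $\Delta$ the degree-$0$ component of $\theta$ is fiberwise a quasi-isomorphism and Proposition~\ref{prop: homotopy equiv degree 0} applies. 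Since $\theta_0=\phi$ exactly, $\id_{E^{\bullet}}\circ\theta|_{X\times\{0\}}=\phi$, so $(\mathfrak{F},\phi)$ is equivalent to the strong deformation $(\mathfrak{E},\id_{E^{\bullet}})$.

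I expect the main obstacle to be the analytic bookkeeping: ensuring that all the perturbation series converge uniformly on $X$ once $\Delta$ is small --- which is where compactness of $X$ enters, via fixed norms on $C^{\infty}(X,\wedge^{\bullet}\overline{T^{*}X}\hat{\otimes}\End^{\bullet}(F^{\bullet}))$ together with the vanishing $\eta(0)=0$ --- and verifying that the perturbed data genuinely assemble into a cohesive module and a homotopy equivalence over $X\times\Delta$, rather than merely formally in $t$. The underlying algebra (the perturbation-lemma identities, and the fact that $\epsilon(t)$ is Maurer--Cartan) is routine and is in large part already recorded in Corollary~\ref{coro: deformation gives Maurer-Cartan element in E}.
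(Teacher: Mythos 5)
Your argument is essentially the paper's own proof: reduce to a regular deformation via Proposition \ref{prop: deformation is equivalent to a regular one}, then apply the homological perturbation lemma with the same data $(\phi,\psi,h)$ to produce $\epsilon(t)=\phi(\id_{F^{\bullet}}-\eta(t)h)^{-1}\eta(t)\psi$ (equivalently your $\text{MC}(\Phi,\eta(t))$) and the comparison map $\phi(t)=\phi+\phi(\id_{F^{\bullet}}-\eta(t)h)^{-1}\eta(t)h$, holomorphic in $t$, and conclude it is a homotopy equivalence by Proposition \ref{prop: homotopy equiv degree 0} plus openness of the quasi-isomorphism condition after shrinking $\Delta$. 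The only cosmetic difference is your invocation of the side conditions $h^2=0$, $\phi h=0$, $h\psi=0$, which the paper's version of the perturbation lemma does not require (and which need not be arrangeable for a homotopy equivalence that is not a retract); nothing in your series actually uses them.
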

\begin{proof}
It is mostly the \emph{homological perturbation lemma} as in \cite[Theorem 2.3]{crainic2004perturbation}. We give the proof here for completeness. Let $\mathfrak{F}=(F^{\bullet}, A^{F^{\bullet}\prime\prime})$ be a deformation of  $\mathcal{E}$ with homotopy equivalence $\phi: \mathfrak{F}|_{X\times \{0\}}\overset{\sim}{\to} \mathcal{E}$. Without loss of generality, we can assume that $\mathfrak{F}$ is regular as in Definition \ref{defi: regular deformation}, i.e.
$$
A^{F^{\bullet}\prime\prime}=A^{F^{\bullet}\prime\prime}_0+\eta(t)+\dbar_{\Delta},
$$
where $\eta(t)$ is a Maurer-Cartan element in $L^1_{\mathfrak{F}_0}\otimes\mathcal{M}(\Delta)$.  

As before, there exists a homotopy inverse
$$
\psi: \mathcal{E}\overset{\sim}{\to} \mathfrak{F}_0
$$
together with a homotopy operator $h\in \Hom_{B(X)}(\mathfrak{F}_0,\mathfrak{F}_0)$ such that 
$$
\psi\phi-\id_{F^{\bullet}}=A^{F^{\bullet}\prime\prime}_0h+h A^{F^{\bullet}\prime\prime}_0.
$$
We consider
$$
\id_{F^{\bullet}}-\eta(t)h\in  \bigoplus_{i+j=0}C^{\infty}(X\times \Delta,\wedge^{i}\overline{T^{*}X}\hat{\otimes} \End^{j}(F^{\bullet})).
$$
We know $\eta(0)h=0$.  Moreover, since $F^{\bullet}$ is a bounded complex, $\eta(t)h$ has finitely many non-zero components. Therefore for a sufficiently small neighborhood $\Delta$ of $0$, we know $\id_{F^{\bullet}}-\eta(t)h$ is 
invertible with inverse given by
\begin{equation}\label{eq: inverse of eta h}
\id_{F^{\bullet}}+\eta(t)h+(\eta(t)h)^2+\ldots
\end{equation}
which converges to an element in $\bigoplus_{i+j=0}C^{\infty}(X\times \Delta,\wedge^{i}\overline{T^{*}X}\hat{\otimes} \End^{j}(F^{\bullet}))$. 

Now we can construct a cohesive module $(E^{\bullet}, A^{E^{\bullet}\prime\prime}+\epsilon(t)+\dbar_{\Delta})$ with
\begin{equation}\label{eq: perturbed epsilon}
\epsilon(t):=\phi(\id_{F^{\bullet}}-\eta(t)h)^{-1}\eta(t)\psi\in  L^1_{\mathcal{E}}\otimes \mathcal{O}(\Delta).
\end{equation}

It is not difficult to check that $\epsilon(t)$ is a Maurer-Cartan element in $ L^1_{\mathcal{E}}\otimes \mathcal{M}(\Delta)$. See \cite[The proof of Theorem 2.3]{crainic2004perturbation}.

Moreover, we define a degree $0$ morphism 
$$
\phi(t): (F^{\bullet}, A^{F^{\bullet}\prime\prime}_0+\eta(t)+\dbar_{\Delta})\to (E^{\bullet}, A^{E^{\bullet}\prime\prime}+\epsilon(t)+\dbar_{\Delta})
$$
 over $X\times \Delta$ by
\begin{equation}\label{eq: phi(t)}
\phi(t):=\phi+\phi(\id_{F^{\bullet}}-\eta(t)h)^{-1}\eta(t)h.
\end{equation}
It is clear that $\phi(t)$ is holomorphic with respect to $t$. Again in the same way as \cite[The proof of Theorem 2.3]{crainic2004perturbation}, we can easily check that $\phi(t)$ is a closed morphism, i.e. 
\begin{equation}
\phi(t)(A^{F^{\bullet}\prime\prime}_0+\eta(t))=A^{E^{\bullet}\prime\prime}+\epsilon(t)\phi(t).
\end{equation}

Now we need to show that $\phi(t)$ is a homotopy equivalence for sufficiently small $\Delta$. Let
$$
A^{E^{\bullet}\prime\prime}=v_0+\nabla^{E^{\bullet}\prime\prime}+v_2+\ldots
$$
$$
A^{F^{\bullet}\prime\prime}_0=u_0+\nabla^{F^{\bullet}\prime\prime}+u_2+\ldots
$$
be the decompositions in \eqref{eq: decomposition of anti super conn} and let 
$$
A^{E^{\bullet}\prime\prime}+\epsilon(t)=v_0(t)+\nabla^{E^{\bullet}\prime\prime}_t+v_2(t)+\ldots
$$
$$
A^{F^{\bullet}\prime\prime}_0+\eta(t)=u_0(t)+\nabla^{F^{\bullet}\prime\prime}_t+u_2(t)+\ldots
$$
be the similar decompositions. It is clear that $u_0(0)=u_0$ and $v_0(0)=v_0$. Since $\phi: (F^{\bullet}, A^{F^{\bullet}\prime\prime}_0)\overset{\sim}{\to} (E^{\bullet}, A^{E^{\bullet}\prime\prime})$ is a homotopy equivalence, by Proposition \ref{prop: homotopy equiv degree 0}, its degree $0$ component
$$
\phi_0: (F^{\bullet}, u_0)\to  (E^{\bullet}, v_0)
$$
is a quasi-isomorphism of cochain complexes. Therefore for $\Delta$ small enough, the degree $0$ component of $\phi(t)$,
$$
\phi_0(t): (F^{\bullet}, u_0(t))\to  (E^{\bullet}, v_0(t))
$$
is a quasi-isomorphism for any $t\in \Delta$. Again by Proposition \ref{prop: homotopy equiv degree 0}, $\phi(t)$ is a homotopy equivalence.
\end{proof}

\begin{rmk}
The $\epsilon(t)$ constructed in \eqref{eq: perturbed epsilon} is the same as the one in \eqref{eq: MC Phi eta explicit} in the proof of  Corollary \ref{coro: deformation gives Maurer-Cartan element in E}.
\end{rmk}

\begin{rmk}
If $\mathfrak{F}$ is a regular deformation of $\mathcal{E}$, then the equivalence $\phi(t)$ constructed in the proof of Proposition \ref{prop: deformation and strong deformation} is a regular equivalence as in Definition \ref{defi: regular equivalence}.
\end{rmk}

It is clear that a holomorphic vector bundle $E$ can be considered as an object in $B(X)$. We have the following corollary.

\begin{coro}\label{coro: deformation of holomorphic vector bundles}
Any deformation as cohesive modules of a holomorphic vector bundle $E$ over sufficiently small $\Delta$ is equivalent to  a holomorphic vector bundle.
\end{coro}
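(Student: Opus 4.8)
The plan is to reduce the statement to a combination of Proposition \ref{prop: deformation is equivalent to a regular one}, Proposition \ref{prop: deformation and strong deformation}, and a degenerate-degree argument specific to the case that $\mathcal{E}=E$ is concentrated in degree $0$. First I would observe that since $E$ is a holomorphic vector bundle, as an object of $B(X)$ it is $(E^{\bullet},A^{E^{\bullet}\prime\prime})$ with $E^{\bullet}$ concentrated in degree $0$ and $A^{E^{\bullet}\prime\prime}=\nabla^{E\prime\prime}=\dbar_E$, i.e.\ all the higher terms $v_i$ vanish. Consequently the DGLA $L_{\mathcal{E}}$ of Section \ref{subsection: DGLA LE} degenerates: because $\End^j(E^{\bullet})=0$ for $j\neq 0$, we get $L^k_{\mathcal{E}}=C^{\infty}(X,\wedge^k\overline{T^*X}\hat\otimes\End(E))=\Omega^{0,k}(X,\End(E))$, the differential $\diff_{\mathcal{E}}$ reduces to $\dbar_E$ (the sum over $i\neq 1$ in \eqref{eq: diff E in LE shorter} is empty), and the bracket reduces to the ordinary graded commutator of $\End(E)$-valued forms. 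In other words $L_{\mathcal{E}}$ is exactly the classical Kodaira–Spencer DGLA $L_E$ of \eqref{eq: dgla for holomorphic vector bundle}.

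Next I would apply Proposition \ref{prop: deformation and strong deformation}: any deformation $\mathfrak{F}$ of $E$ over a sufficiently small $\Delta$ is equivalent to a strong deformation $\mathfrak{E}=(E^{\bullet},A^{E^{\bullet}\prime\prime}+\epsilon(t)+\dbar_{\Delta})$ with $\epsilon(t)$ a Maurer–Cartan element of $L^1_{\mathcal{E}}\otimes\mathcal{M}(\Delta)$. Here is the key point: since $L^1_{\mathcal{E}}=\Omega^{0,1}(X,\End(E))$ consists purely of degree-$1$ (in the $\wedge\overline{T^*X}$ grading) endomorphism-valued forms with $\End$-degree $0$, the element $\epsilon(t)$ has only a degree-$1$ component; it contributes no $v_0$-term and no higher $v_i$-term. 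Therefore $A^{E^{\bullet}\prime\prime}+\epsilon(t)$ is again a genuine $\dbar$-connection $\dbar_E+\epsilon(t)$ on the $C^{\infty}$-bundle $E$ (still concentrated in degree $0$), and the Maurer–Cartan equation \eqref{eq: Maurer-Cartan for E} becomes precisely \eqref{eq: Maurer-Cartan equation for holomorphic vector bundle}, $\dbar_E\epsilon(t)+\tfrac12[\epsilon(t),\epsilon(t)]=0$, which says $(\dbar_E+\epsilon(t))^2=0$. By the Koszul–Malgrange theorem this flat $\dbar$-connection defines a holomorphic structure on $E$, and letting $t$ vary over $\Delta$ we get a holomorphic vector bundle $F$ on $X\times\Delta$ (using that $\epsilon(t)$ depends holomorphically on $t$ and $[\epsilon(t),\dbar_{\Delta}]$-type compatibility is built into the strong deformation). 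Finally, the homotopy equivalence $\phi(t)$ produced in the proof of Proposition \ref{prop: deformation and strong deformation} exhibits the original $\mathfrak{F}$ as equivalent, in $\mathrm{Def}_{\Delta}(E)$, to this $\mathfrak{E}$, which is an honest holomorphic vector bundle viewed as a cohesive module.

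The main obstacle, and the only thing requiring care, is verifying that the strong deformation $\mathfrak{E}$ really stays concentrated in a single degree with purely a $\dbar$-connection — i.e.\ that no higher superconnection terms get generated. This is where the formula \eqref{eq: perturbed epsilon}, $\epsilon(t)=\phi(\id-\eta(t)h)^{-1}\eta(t)\psi$, must be examined: a priori $\eta(t)$, $h$, $\phi$, $\psi$ live on $\mathfrak{F}_0$, which need not be concentrated in degree $0$, so the composite could carry higher-form components. The resolution is that $\epsilon(t)\in L^1_{\mathcal{E}}\otimes\mathcal{O}(\Delta)$ by construction, and $L^1_{\mathcal{E}}$ for $\mathcal{E}=E$ is \emph{exactly} $\Omega^{0,1}(X,\End(E))$ by the degeneration discussed above — there is simply no room for any other component. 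Once that bookkeeping is in place, the rest is an immediate invocation of Koszul–Malgrange plus the definitions, so I would keep the write-up short and lean on the cited results rather than re-deriving the homological perturbation computation.
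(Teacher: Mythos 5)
Your proposal is correct and follows essentially the same route as the paper: invoke Proposition \ref{prop: deformation and strong deformation} to replace the deformation by a strong one, then observe that for $E^{\bullet}$ concentrated in degree $0$ the Maurer--Cartan element $\epsilon(t)$ is forced into $\Omega^{0,1}(X,\End(E))$, so the strong deformation is just a flat $\dbar$-connection on $X\times\Delta$, i.e.\ a holomorphic vector bundle. The paper states this in two sentences; your degree-bookkeeping for $L^1_{\mathcal{E}}$ is exactly the justification it leaves implicit.
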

\begin{proof}
By Proposition \ref{prop: deformation and strong deformation}, any deformation of $E$ is equivalent of a strong deformation of $E$ . When $E$ is a holomorphic vector bundle on $X$, a strong  deformation of $E$ is a holomorphic vector bundle on $X\times \Delta$.
\end{proof}

It is clear that the strong deformation constructed in the proof of Proposition \ref{prop: deformation and strong deformation} depends on the choice of the homotopy inverse $\psi$ and the cochain homotopy operator $h$. 
To further investigate equivalences between strong deformations, we introduce the following concept.

\begin{defi}\label{defi: strong equivalence}
An equivalence $\theta: \mathfrak{E}_1\to \mathfrak{E}_2$ between two strong deformations of $\mathcal{E}$ over $\Delta$ is called a \emph{strong equivalence} if $\theta$ is a regular equivalence and moreover
\begin{equation}
\theta|_{X\times \{0\}}=\id_{E^{\bullet}}.
\end{equation}
In other words, $\theta$ is of the form
\begin{equation}
\theta=\id_{E^{\bullet}} +\vartheta(t), ~t\in \Delta
\end{equation}
where $\vartheta(t)$ is holomorphic with respect to $t$ and $\vartheta(0)=0$.
\end{defi}

\begin{prop}\label{prop: equivalence and strong equivalence}
Any equivalence $\theta: \mathfrak{E}_1\to \mathfrak{E}_2$ between strong deformations is cochain homotopic to a strong equivalence over a sufficiently small neighborhood $\Delta$ of $0$.
\end{prop}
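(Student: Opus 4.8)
The plan is to first invoke Proposition~\ref{prop: equivalence and regular equivalence} to reduce to a regular equivalence, and then to correct the value at $t=0$ by a single $t$-independent cochain homotopy. Write $\mathfrak{E}_1=(E^{\bullet},A^{E^{\bullet}\prime\prime}+\epsilon_1(t)+\dbar_{\Delta})$ and $\mathfrak{E}_2=(E^{\bullet},A^{E^{\bullet}\prime\prime}+\epsilon_2(t)+\dbar_{\Delta})$, and, as in the proof of Proposition~\ref{prop: equivalence and regular equivalence}, abbreviate $D_t(\cdot):=(A^{E^{\bullet}\prime\prime}+\epsilon_2(t))(\cdot)-(\cdot)(A^{E^{\bullet}\prime\prime}+\epsilon_1(t))$. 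Since a strong deformation is regular, Proposition~\ref{prop: equivalence and regular equivalence} applies, so after replacing $\theta$ by a cochain-homotopic morphism I may assume $\theta=\theta(t)$ is a regular equivalence: it has no component in the $\Delta$-direction, it is holomorphic in $t$ (which I would read off from \eqref{eq: two direction closeness of theta} with $j=0$, giving $\dbar_{\Delta}\theta^{\bullet,0}=0$), and it satisfies $D_t(\theta(t))=0$. Restricting a cochain homotopy on $X\times\Delta$ to $X\times\{0\}$ yields a cochain homotopy on $X$, so the hypothesis that $\theta$ is an equivalence of \emph{strong} deformations (for which $\phi=\psi=\id_{E^{\bullet}}$) still forces $\theta(0)\colon\mathcal{E}\to\mathcal{E}$ to be chain homotopic to $\id_{E^{\bullet}}$; that is, there is $k\in\Hom^{-1}_{B(X)}(\mathcal{E},\mathcal{E})$ with $\theta(0)-\id_{E^{\bullet}}=A^{E^{\bullet}\prime\prime}k+kA^{E^{\bullet}\prime\prime}$.

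Next I would extend $k$ to $X\times\Delta$ as the constant family $\tilde k:=p^{*}k$; this is a degree $-1$ morphism of the trivially extended bundle with $t$-independent coefficients, so $[\dbar_{\Delta},\tilde k]=0$ and $\tilde k$ has no $\Delta$-component. Its differential in $B(X\times\Delta)$ is therefore
\begin{equation*}
\diff_{X\times\Delta}(\tilde k)=(A^{E^{\bullet}\prime\prime}+\epsilon_2(t))\,\tilde k+\tilde k\,(A^{E^{\bullet}\prime\prime}+\epsilon_1(t)),
\end{equation*}
which lies in $C^{\infty}(X\times\Delta,\wedge^{\bullet,0}\overline{T^{*}(X\times\Delta)}\hat{\otimes}\End^{\bullet}(E^{\bullet}))$ and is holomorphic in $t$. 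Set $\theta':=\theta-\diff_{X\times\Delta}(\tilde k)$. Then $\theta'$ is a closed degree $0$ morphism cochain homotopic to $\theta$, hence a homotopy equivalence; it is still a regular equivalence, since $\tilde k$ contributes nothing in the $\Delta$-direction; it is still holomorphic in $t$; and at $t=0$ it equals $\theta(0)-(A^{E^{\bullet}\prime\prime}k+kA^{E^{\bullet}\prime\prime})=\id_{E^{\bullet}}$. Writing $\theta'=\id_{E^{\bullet}}+\vartheta(t)$ with $\vartheta(t)$ holomorphic in $t$ and $\vartheta(0)=0$ then exhibits $\theta'$ as a strong equivalence in the sense of Definition~\ref{defi: strong equivalence}, cochain homotopic to $\theta$, which is the assertion.

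I do not expect a serious obstacle: this is the ``equivalence'' analogue of Proposition~\ref{prop: deformation and strong deformation}, and the only input beyond Proposition~\ref{prop: equivalence and regular equivalence} is the normalization at $t=0$ furnished by the constant cochain homotopy. The points that need to be checked carefully are the bookkeeping ones---that the regular equivalence produced by Proposition~\ref{prop: equivalence and regular equivalence} is automatically holomorphic in $t$, that subtracting $\diff_{X\times\Delta}(\tilde k)$ preserves both regularity and holomorphy (immediate because $\tilde k$ is $t$-independent and horizontal to $\Delta$), and that this modification restricts on $X\times\{0\}$ to exactly the chain homotopy $k$, so that $\theta'|_{X\times\{0\}}=\id_{E^{\bullet}}$ on the nose. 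No shrinking of $\Delta$ is needed beyond what Proposition~\ref{prop: equivalence and regular equivalence} already requires.
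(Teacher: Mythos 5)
Your proposal is correct and follows essentially the same route as the paper: reduce to a regular equivalence via Proposition~\ref{prop: equivalence and regular equivalence}, note that $\theta(0)$ is chain homotopic to $\id_{E^{\bullet}}$, and subtract the $B(X\times\Delta)$-differential of the constantly extended homotopy; your $\theta'=\theta-\diff_{X\times\Delta}(\tilde k)$ is exactly the paper's $\tilde\theta=\id_{E^{\bullet}}-\epsilon_2(t)h-h\epsilon_1(t)+\tau(t)$. The only cosmetic difference is that you phrase the correction operator-theoretically while the paper writes it out componentwise.
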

\begin{proof}
Let 
$$
 \mathfrak{E}_1=(E^{\bullet}, A^{E^{\bullet}\prime\prime}+\epsilon_1(t)+\dbar_{\Delta})
$$
and
$$
 \mathfrak{E}_2=(E^{\bullet}, A^{E^{\bullet}\prime\prime}+\epsilon_2(t)+\dbar_{\Delta}).
$$
By Proposition \ref{prop: equivalence and regular equivalence}, we can assume that $\theta$ is a regular equivalence, i.e. $
\theta=\theta(t)$,
 where for each $t\in \Delta$, 
$$
\theta(t): (E^{\bullet}, A^{E^{\bullet}\prime\prime}+\epsilon_1(t))\to (E^{\bullet}, A^{E^{\bullet}\prime\prime}+\epsilon_2(t))
$$
 is a homotopy equivalence of cohesive modules on $X$, and $\theta(t)$ is holomorphic with respect to $t$.

Moreover, by \eqref{eq: deformation restrict to 0} and Definition \ref{defi: strong deformation}, we know that $\theta(0)$ is cochain homotopic to $\id_{E^{\bullet}}$, i.e. we can find an
$$
h\in L_{\mathcal{E}}^{-1}=\bigoplus_{i+j=-1}C^{\infty}(X,\wedge^{i}\overline{T^{*}X}  \hat{\otimes}  \End^{j}(E^{\bullet}))
$$
such that $
\theta(0)=\id_{E^{\bullet}}+\diff_{\mathcal{E}}(h)$. Therefore we can write $\theta$ as 
\begin{equation}\label{eq: decompose of theta with respect to t}
\theta=\id_{E^{\bullet}}+\diff_{\mathcal{E}}(h)+\tau(t)
\end{equation}
where $\tau(t)$ is holomorphic with respect to $t$ and $\tau(0)=0$. Here recall that
$$
\diff_{\mathcal{E}}(h)=A^{E^{\bullet}\prime\prime}h+hA^{E^{\bullet}\prime\prime}.
$$

Now we define 
\begin{equation}\label{eq: decompose of theta tilde with respect to t}
\tilde{\theta}:=\id_{E^{\bullet}}-\epsilon_2(t)h-h\epsilon_1(t)+\tau(t).
\end{equation}
\eqref{eq: decompose of theta with respect to t} and \eqref{eq: decompose of theta tilde with respect to t} give
\begin{equation}
\theta=\tilde{\theta}+(A^{E^{\bullet}\prime\prime}+\epsilon_2(t))h+h(A^{E^{\bullet}\prime\prime}+\epsilon_1(t)),
\end{equation}
i.e $\tilde{\theta}$ is cochain homotopic to $\theta$. Moreover $\tilde{\theta}$ is a strong equivalence between strong deformations.
\end{proof}

The following property of strong equivalences is not difficult to obtain.

\begin{lemma}\label{lemma: strong equivalence is invertible}
A strong equivalence $\theta: \mathfrak{E}_1\to \mathfrak{E}_2$ between two strong deformations of $\mathcal{E}$ over $\Delta$ is invertible if $\Delta$ is sufficiently small. 
\end{lemma}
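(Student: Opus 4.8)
The plan is to write $\theta = \id_{E^\bullet} + \vartheta(t)$ as in Definition \ref{defi: strong equivalence}, where $\vartheta(t)$ is holomorphic in $t$ and $\vartheta(0)=0$, and to produce an explicit inverse as a geometric series $\theta^{-1} = \id_{E^\bullet} - \vartheta(t) + \vartheta(t)^2 - \cdots$. The key observation, exactly as in the proof of Proposition \ref{prop: deformation and strong deformation} for the operator $\id_{F^\bullet} - \eta(t)h$, is that $\vartheta(t)$ decomposes into finitely many components
\[
\vartheta(t) = \sum_{i\ge 0} \vartheta^{i}(t), \qquad \vartheta^{i}(t)\in C^\infty(X\times\Delta, \wedge^{i}\overline{T^*X}\hat\otimes \End^{-i}(E^\bullet)),
\]
and since $E^\bullet$ is a \emph{bounded} complex and $X$ has finite dimension, only finitely many of these are nonzero; in particular $\vartheta(t)^k = 0$ once $k$ exceeds the sum of the (co)homological length of $E^\bullet$ and $\dim_{\mathbb C} X$ in the relevant range, modulo the form-degree bookkeeping — more precisely, the terms of $\vartheta(t)^k$ of total degree $0$ involve at least $k$ factors raising form-degree or shifting endomorphism degree, and these are eventually forced to vanish. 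Actually the cleanest route is the one already used in the excerpt: shrink $\Delta$ so that $\vartheta(t)$ is small in a fixed norm on the (finite-dimensional, because $X$ is compact and $E^\bullet$ bounded) space of zero-degree components, so that the Neumann series converges.

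First I would record that, since $\theta$ is a \emph{strong} equivalence, it is a regular equivalence with $\theta|_{X\times\{0\}}=\id_{E^\bullet}$, so $\vartheta(0)=0$ and $\vartheta(t)$ is holomorphic in $t$ with no $\dbar_\Delta$-component. Next I would view
\[
\mathfrak{D}^0 := \bigoplus_{i+j=0} C^\infty(X,\wedge^{i}\overline{T^*X}\hat\otimes \End^{j}(E^\bullet))
\]
as a Banach space after choosing a norm (using compactness of $X$ and boundedness of $E^\bullet$, this is a finite-dimensional-in-form-degree, infinite-dimensional-in-function-coefficients space, so pick e.g. a $C^0$ or Sobolev norm), and $\vartheta$ as a holomorphic $\mathfrak{D}^0$-valued function on $\Delta$ with $\vartheta(0)=0$. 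Then $\sup_{t\in\Delta}\|\vartheta(t)\| < 1$ for $\Delta$ sufficiently small, so $\id_{E^\bullet}+\vartheta(t)$ is invertible with inverse $\sum_{k\ge 0}(-1)^k\vartheta(t)^k \in \mathfrak{D}^0$, which is again holomorphic in $t$ and equals $\id_{E^\bullet}$ at $t=0$. This shows $\theta$ has a two-sided inverse as a degree-$0$ morphism over $X\times\Delta$; and since $\theta$ is closed, its inverse is automatically closed as well (from $A''\theta = \theta A''$ one gets $\theta^{-1}A'' = A''\theta^{-1}$), hence $\theta^{-1}$ is itself a strong equivalence $\mathfrak{E}_2\to\mathfrak{E}_1$.

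The main (and really only) obstacle is the convergence/finiteness bookkeeping — making sure the Neumann series makes sense in the appropriate completed tensor product $\mathfrak{D}^0$ rather than term-by-term in each form/endomorphism bidegree, and checking that shrinking $\Delta$ suffices. But this is entirely parallel to the argument already carried out in the proof of Proposition \ref{prop: deformation and strong deformation} for $(\id_{F^\bullet}-\eta(t)h)^{-1}$ in \eqref{eq: inverse of eta h}, so it should be routine; in fact, if one prefers to avoid norms altogether, one can note directly that the total-degree-zero part of $\vartheta(t)^k$ must vanish for $k$ large, because each factor $\vartheta^{i}(t)$ with $i>0$ contributes strictly positive form-degree (capped by $\dim X$) while the $i=0$ factors $\vartheta^{0}(t)$ lie in $C^\infty(X,\End^0(E^\bullet))$ with $\vartheta^0(0)=0$ — and for those one again invokes the small-$\Delta$ argument on the finitely many nonzero endomorphism-degree strata. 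Either way the inverse is exhibited explicitly, and the lemma follows.
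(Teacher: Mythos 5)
Your proposal is correct and follows essentially the same route as the paper: write $\theta=\id_{E^{\bullet}}+\vartheta(t)$ with $\vartheta(0)=0$ and invert via the Neumann series $\sum_{k\geq 0}(-1)^k\vartheta(t)^k$ on a sufficiently small $\Delta$, noting smoothness and holomorphy in $t$. Your additional observations (nilpotence of the positive form-degree components, closedness of the inverse) are correct refinements of the same argument.
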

\begin{proof}
For $\theta=\id_{E^{\bullet}} +\vartheta(t)$. Since $\vartheta(0)=0$, $\theta^{-1}$ is given by
\begin{equation}
\id_{E^{\bullet}}-\vartheta(t)+\vartheta^2(t)-\vartheta^3(t)+\ldots
\end{equation}
on a small neighborhood of $0$. It is clear that $\theta^{-1}$ is still $C^{\infty}$ on $X\times \Delta$ and is holomorphic on the $\Delta$ direction.
\end{proof}

Now we are ready to state the following theorem.

\begin{thm}\label{thm: deformation and Maurer-Cartan}
Let $\mathcal{E}$ be a cohesive module on a compact complex manifold $X$ and  $\Delta$ be  a small ball centered at  $0\in\mathbb{C}^{m}$ as before. Let $L_{\mathcal{E}}$ be the DGLA defined in \eqref{eq: dgla LE}. Then we have
\begin{enumerate}
\item Any deformation of $\mathcal{E}$ corresponds to a Maurer-Cartan element of $L_{\mathcal{E}}\otimes \mathcal{M}(\Delta)$ when $\Delta$ is sufficiently small;
\item Two deformations of $\mathcal{E}$ are equivalent if and only if their corresponding Maurer-Cartan elements are gauge equivalent in $L_{\mathcal{E}}\otimes \mathcal{M}(\Delta)$;
\end{enumerate}
i.e., there is a bijection
\begin{equation}\label{eq: 1-1 correspondence between deformations and Maurer-Cartan}
\text{Def}_{\Delta}(\mathcal{E}) \overset{1\text{-}1}{\leftrightarrow} \text{MC}(L_{\mathcal{E}}\otimes \mathcal{M}(\Delta)).
\end{equation}
In particular, the gauge equivalent class of the Maurer-Cartan element corresponding to a deformation does not depend on the choice of the homotopy inverse and cochain homotopy operators in Proposition \ref{prop: deformation and strong deformation}.
\end{thm}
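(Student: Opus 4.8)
The plan is to organize the proof around two reduction steps that are already available — every deformation is equivalent to a strong deformation (Proposition \ref{prop: deformation and strong deformation}), and every equivalence between strong deformations is cochain homotopic to a strong equivalence (Proposition \ref{prop: equivalence and strong equivalence}) — so that the theorem reduces to a clean statement internal to $L_{\mathcal{E}}$. First I would define the two maps realizing the bijection. Given a deformation $\mathfrak{F}$ of $\mathcal{E}$, Proposition \ref{prop: deformation and strong deformation} produces a strong deformation $\mathfrak{E}=(E^{\bullet}, A^{E^{\bullet}\prime\prime}+\epsilon(t)+\dbar_{\Delta})$ equivalent to $\mathfrak{F}$, and Proposition \ref{prop: Maurer-Cartan in E} says $\epsilon(t)\in\text{MC}(L_{\mathcal{E}}\otimes\mathcal{M}(\Delta))$; I set $[\mathfrak{F}]\mapsto[\epsilon(t)]$. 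Conversely, for a Maurer-Cartan element $\epsilon(t)\in L^1_{\mathcal{E}}\otimes\mathcal{M}(\Delta)$, the operator $A^{E^{\bullet}\prime\prime}+\epsilon(t)+\dbar_{\Delta}$ squares to zero (by \eqref{eq: Maurer-Cartan for E}, flatness of $A^{E^{\bullet}\prime\prime}$, $\dbar_{\Delta}^2=0$, and the fact that $\dbar_{\Delta}$ anticommutes with $A^{E^{\bullet}\prime\prime}$ and with $\epsilon(t)$ because $\epsilon(t)$ is holomorphic), so it defines a cohesive module on $X\times\Delta$, in fact a strong deformation with structure map $\id_{E^{\bullet}}$; I set $[\epsilon(t)]\mapsto[(E^{\bullet}, A^{E^{\bullet}\prime\prime}+\epsilon(t)+\dbar_{\Delta})]$. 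By Proposition \ref{prop: Maurer-Cartan in E} the composite $\text{MC}\to\text{Def}\to\text{MC}$ is visibly the identity.

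The technical heart is the following claim, which I would isolate as a lemma: two strong deformations $\mathfrak{E}_i=(E^{\bullet}, A^{E^{\bullet}\prime\prime}+\epsilon_i(t)+\dbar_{\Delta})$, $i=1,2$, are equivalent as deformations if and only if $\epsilon_1(t)$ and $\epsilon_2(t)$ are gauge equivalent in $L_{\mathcal{E}}\otimes\mathcal{M}(\Delta)$. For the forward direction, Proposition \ref{prop: equivalence and strong equivalence} replaces the equivalence by a strong equivalence $\theta=\id_{E^{\bullet}}+\vartheta(t)$, invertible by Lemma \ref{lemma: strong equivalence is invertible}. After shrinking $\Delta$, $u:=\log\theta$ converges to an element of $L^0_{\mathcal{E}}\otimes\mathcal{M}(\Delta)$: the components of $\vartheta(t)$ of positive antiholomorphic form degree are nilpotent, and the endomorphism-only component of $\vartheta(t)$ has operator norm $<1$ uniformly on the compact $X$ once $\Delta$ is small, since it vanishes at $0$; thus $\theta=e^u$. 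Closedness of $\theta$ reads $(A^{E^{\bullet}\prime\prime}+\epsilon_2(t)+\dbar_{\Delta})\theta=\theta(A^{E^{\bullet}\prime\prime}+\epsilon_1(t)+\dbar_{\Delta})$; since $\theta$ is holomorphic the $\dbar_{\Delta}$ terms cancel and we obtain $e^{u}(A^{E^{\bullet}\prime\prime}+\epsilon_1(t))e^{-u}=A^{E^{\bullet}\prime\prime}+\epsilon_2(t)$. Expanding the left side via the Leibniz rule for $A^{E^{\bullet}\prime\prime}$ and comparing with \eqref{eq: gauge equivalent} (equivalently, with the explicit gauge action $\epsilon_1\mapsto e^{\ad_u}\epsilon_1-\frac{e^{\ad_u}-1}{\ad_u}(\diff_{\mathcal{E}}u)$) identifies $\epsilon_2(t)$ as the gauge transform of $\epsilon_1(t)$ by $u$. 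The reverse direction runs the same computation backwards: from $u\in L^0_{\mathcal{E}}\otimes\mathcal{M}(\Delta)$ with $e^u$ convergent, the morphism $\theta:=e^u=\id_{E^{\bullet}}+(\cdots)$ is invertible, holomorphic, restricts to $\id_{E^{\bullet}}$ at $0$ and commutes with $\dbar_{\Delta}$, and the conjugation identity promotes the gauge relation to $(A^{E^{\bullet}\prime\prime}+\epsilon_2(t)+\dbar_{\Delta})\theta=\theta(A^{E^{\bullet}\prime\prime}+\epsilon_1(t)+\dbar_{\Delta})$; so $\theta$ is a closed, degree $0$, invertible morphism, hence a homotopy equivalence (by Proposition \ref{prop: homotopy equiv degree 0}) and a strong equivalence, witnessing $\mathfrak{E}_1\sim\mathfrak{E}_2$.

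Granting this lemma, the theorem assembles formally. Well-definedness of $[\mathfrak{F}]\mapsto[\epsilon(t)]$: any two strong models of a fixed deformation class are equivalent to each other by transitivity of the equivalence relation on deformations, hence their Maurer-Cartan elements are gauge equivalent by the lemma; the same argument handles equivalent deformations $\mathfrak{F}\sim\mathfrak{G}$ and, in particular, yields the stated independence of the choices of homotopy inverse and homotopy operator in Proposition \ref{prop: deformation and strong deformation}. Injectivity: gauge-equivalent Maurer-Cartan elements arise from strong deformations that the lemma makes equivalent, so the original deformations are equivalent. Surjectivity: the map $[\epsilon(t)]\mapsto[(E^{\bullet}, A^{E^{\bullet}\prime\prime}+\epsilon(t)+\dbar_{\Delta})]$ hits every class, and together with the first composite being the identity this shows the two maps are mutually inverse.

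I expect the only genuinely delicate point to be the lemma: setting up the $\exp$/$\log$ dictionary between strong equivalences $\theta=\id_{E^{\bullet}}+\vartheta(t)$ and elements $u\in L^0_{\mathcal{E}}\otimes\mathcal{M}(\Delta)$ — which forces the successive shrinkings of $\Delta$ — and, above all, carrying out the sign bookkeeping that matches the superconnection conjugation $e^{u}(A^{E^{\bullet}\prime\prime}+\epsilon_1)e^{-u}$ with the DGLA gauge action in \eqref{eq: gauge equivalent}. Everything else is formal once Propositions \ref{prop: deformation and strong deformation} and \ref{prop: equivalence and strong equivalence} are in hand.
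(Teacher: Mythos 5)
Your proposal is correct and follows essentially the same route as the paper: reduce via Proposition \ref{prop: deformation and strong deformation} and Proposition \ref{prop: equivalence and strong equivalence} (with Lemma \ref{lemma: strong equivalence is invertible}) to strong deformations and strong equivalences, then pass between a strong equivalence $\id_{E^{\bullet}}+\vartheta(t)$ and a gauge element via $u=\log(\id_{E^{\bullet}}+\vartheta(t))$ on a sufficiently small $\Delta$. The only difference is presentational: you isolate the ``strong deformations equivalent iff gauge equivalent'' statement as a lemma and spell out the converse direction and surjectivity, which the paper treats as immediate.
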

\begin{proof}
Claim 1 is clear from  Proposition \ref{prop: deformation and strong deformation}. 

To see Claim 2, let $\theta: \mathfrak{F}\to \mathfrak{G}$ be an equivalence between deformations. Let $\xi_1:  \mathfrak{F}\to \mathfrak{E}_1$ and $\xi_2:  \mathfrak{F}\to \mathfrak{E}_2$ be equivalences constructed in Proposition \ref{prop: deformation and strong deformation}, where $\mathfrak{E}_1$ and $\mathfrak{E}_2$ are strong deformations, since all equivalences are invertible up to homotopy, there exists an equivalence
$$
\tilde{\theta}:  \mathfrak{E}_1\to \mathfrak{E}_2.
$$
By Proposition \ref{prop: equivalence and strong equivalence} and Lemma \ref{lemma: strong equivalence is invertible}, we know that $\tilde{\theta}$ is cochain homotopic to $\id_{E^{\bullet}}+\vartheta(t)$ where $\vartheta(0)=0$. Now let 
\begin{equation}
u(t):=\log(\id_{E^{\bullet}}+\vartheta(t))=\vartheta(t)-\frac{\vartheta(t)^2}{2}+\frac{\vartheta(t)^3}{3}-\ldots
\end{equation}
For $\Delta$ sufficiently small, we know that $u(t)$ is $C^{\infty}$ on $X\times \Delta$ and is holomorphic on the $\Delta$ direction. Moreover we have $u(0)=0$. By definition, $u(t)$ satisfies \eqref{eq: gauge equivalent}.

On the other hand, it is obvious that gauge equivalent Maurer-Cartan elements give equivalent strong deformations, hence the corresponding deformations are also equivalent.
\end{proof}

\begin{rmk}
If $\mathcal{E}_1$ and $\mathcal{E}_2$ are homotopy equivalent in $B(X)$. Then   there is a bijection between $\text{Def}_{\Delta}(\mathcal{E}_1)$ and $\text{Def}_{\Delta}(\mathcal{E}_2)$  as in the proof of Lemma \ref{lemma: homotopy equivalence and deformations}. On the other hand there is also a bijection between $\text{MC}(L_{\mathcal{E}_1}\otimes \mathcal{M}(\Delta))$ and $\text{MC}(L_{\mathcal{E}_2}\otimes \mathcal{M}(\Delta))$ by Lemma \ref{lemma: homotopy equivalent L infty equivalent of dgla} and \cite[Theorem 8.13]{yekutieli2012mc}. It is easy to see that these bijections are compatible with \eqref{eq: 1-1 correspondence between deformations and Maurer-Cartan}.
\end{rmk}

\begin{rmk}
We can consider an $\infty$-categorical generalization of Theorem \ref{thm: deformation and Maurer-Cartan}.
In more details, for a DGLA $\mathfrak{g}$, we can associate an $\infty$-groupoid $\Sigma_{\mathfrak{g}}$ as in \cite{hinich2001dg} which enhances MC$(\mathfrak{g})$. It is expected that we can consider all deformations of $\mathcal{E}$ as an $\infty$-groupoid $\mathcal{D}\text{ef}_{\Delta}(\mathcal{E})$ and show that $\mathcal{D}\text{ef}_{\Delta}(\mathcal{E})$ is weak equivalent to $\Sigma_{L_{\mathcal{E}}\otimes  \mathcal{M}(\Delta)}$. This is a topic for future studies.
\end{rmk}

\section{Infinitesimal Deformations}\label{section: infinitesimal deformations}
Let $\mathcal{E}$ be a cohesive module on a compact complex manifold $X$ and  $\Delta$ be a a small ball centered at the origin in $\mathbb{C}^{m}$ as before. For a holomorphic vector $v\in T_{0}\Delta$, let $U$ be a small neighborhood of $0\in \mathbb{C}$ and $f: U\to \Delta$ be a $1$-parameter holomorphic curve in $\Delta$  such that $f(0)=0$ and
$$
\frac{\partial f}{\partial s}\Big|_{s=0}=v.
$$

Let $\mathfrak{F}$ be a family of deformation of $\mathcal{E}$ over $\Delta$ and $\mathfrak{E}$ be a corresponding strong deformation given by Proposition \ref{prop: deformation and strong deformation}. Then the pull back of $\mathfrak{E}$ by $f$ gives a strong deformation  $f^*\mathfrak{E}$ over $U$. 

In more details we have
\begin{equation}
f^*\mathfrak{E}=(E^{\bullet}, A^{E^{\bullet}\prime\prime}+\epsilon(f(s))+\dbar_U)
\end{equation}
where $\epsilon(f(s))$ is a Maurer-Cartan element in $L_{\mathcal{E}}\otimes \mathcal{M}(U)$, i.e. it is a holomorphic function with variable $s$ and valued in $\bigoplus_{i+j=1}C^{\infty}(X,\wedge^{i}\overline{T^{*}X}\hat{\otimes} \End^j(E^{\bullet}))$ which satisfies $\epsilon(f(0))=0$ and
\begin{equation}\label{eq: Maurer-Cartan for epsilon(f(s))}
\diff_{\mathcal{E}}(\epsilon(f(s)))+\frac{1}{2}[\epsilon(f(s)),\epsilon(f(s))]=0.
\end{equation}

Since $\epsilon(f(s))$ is holomorphic, we have its power series expansion
\begin{equation}
\epsilon(f(s))=\epsilon_1 s+\epsilon_2 s^2+\epsilon_3 s^3+\ldots
\end{equation}
Therefore \eqref{eq: Maurer-Cartan for epsilon(f(s))} becomes
\begin{equation}\label{eq: Maurer-Cartan for epsilon 1}
\diff_{\mathcal{E}}(\epsilon_1)=0
\end{equation}
and
\begin{equation}\label{eq: Maurer-Cartan for epsilon l}
\diff_{\mathcal{E}}(\epsilon_l)+\frac{1}{2}\sum_{i+j=l}[\epsilon_i,\epsilon_j]=0, \text{ for }l\geq 2.
\end{equation}
Equation \eqref{eq: Maurer-Cartan for epsilon 1} tells us that $\epsilon_1$ is a degree $1$ closed morphism from $\mathcal{E}$ to itself in the dg-category $B(X)$. Let $[\epsilon_1]$ denote its cocahin homotopy class. It is clear that 
\begin{equation}\label{eq: epsilon 1}
[\epsilon_1]\in \Hom_{\underline{B}(X)}(\mathcal{E},\mathcal{E}[1]).
\end{equation}

\begin{defi}\label{defi: infinitesimal deformation}
We call an element in $\Hom_{\underline{B}(X)}(\mathcal{E},\mathcal{E}[1])$ an \emph{infinitesimal deformation} of $\mathcal{E}$.
\end{defi}

The above construction shows that an genuine deformation gives an infinitesimal deformation. Moreover we have the following result.

\begin{prop}\label{prop: deformation gives infinitesimal deformation}
We have a well defined Kodaira-Spencer type map
\begin{equation}
\KS: T_{0}\Delta\times \text{Def}_{\Delta}(\mathcal{E})\to \Hom_{\underline{B}(X)}(\mathcal{E},\mathcal{E}[1])
\end{equation}
 which is linear on the first component, such that for $v\in T_{0}\Delta$ and $\mathfrak{F}\in  \text{Def}_{\Delta}(\mathcal{E})$, we have $\KS(v,\mathfrak{F}):=[\epsilon_1]$ as in \eqref{eq: epsilon 1}.
\end{prop}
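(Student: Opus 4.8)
The plan is to reduce the whole statement to a fact about Maurer--Cartan elements via Theorem~\ref{thm: deformation and Maurer-Cartan}, and then to recognize $\epsilon_1$ as the value at $v$ of the derivative at $0$ of the associated Maurer--Cartan element. First I would fix a deformation $\mathfrak{F}$ and, by Theorem~\ref{thm: deformation and Maurer-Cartan}, choose a Maurer--Cartan element $\epsilon(t)\in L^1_{\mathcal{E}}\otimes\mathcal{M}(\Delta)$ representing it; the last sentence of that theorem guarantees that the gauge class of $\epsilon(t)$ depends only on the equivalence class of $\mathfrak{F}$, hence not on the choice of strong deformation, homotopy inverse, or cochain homotopy made in Proposition~\ref{prop: deformation and strong deformation}. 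Writing $v=\sum_{k=1}^m v_k\,\partial/\partial t_k$ and using that $\epsilon$ is holomorphic with $\epsilon(0)=0$, the chain rule gives
$$
\epsilon_1=\frac{d}{ds}\Big|_{s=0}\epsilon(f(s))=\sum_{k=1}^m\frac{\partial\epsilon}{\partial t_k}\Big|_{t=0}v_k=:d\epsilon_0(v).
$$
This identity already shows that $\epsilon_1$ depends on the curve $f$ only through $v=f'(0)$ and that $v\mapsto\epsilon_1$ is $\mathbb{C}$-linear, i.e.\ $d\epsilon_0\colon T_0\Delta\to L^1_{\mathcal{E}}$ is linear. As recorded in \eqref{eq: Maurer-Cartan for epsilon 1} (equivalently, differentiating \eqref{eq: Maurer-Cartan for E} at $t=0$, where the bracket term vanishes to second order since $\epsilon(0)=0$), we have $\diff_{\mathcal{E}}(\epsilon_1)=0$, so $d\epsilon_0$ takes values in $\ker\diff_{\mathcal{E}}\cap L^1_{\mathcal{E}}$. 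Composing with the projection onto the quotient in \eqref{eq: DGLA LE and morphism in B(X)} then defines the desired map $\KS(-,\mathfrak{F})\colon T_0\Delta\to\Hom_{\underline{B}(X)}(\mathcal{E},\mathcal{E}[1])$, $v\mapsto[\epsilon_1]$, which is linear in the first argument.

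It remains to check that $[\epsilon_1]$ is unchanged if $\epsilon(t)$ is replaced by a gauge-equivalent Maurer--Cartan element $\epsilon'(t)$. Taking the gauge parameter $u(t)\in L^0_{\mathcal{E}}\otimes\mathcal{M}(\Delta)$ with $u(0)=0$ as in \eqref{eq: gauge equivalent} and expanding the gauge action to first order in $t$: because $u(0)=0$ and $\epsilon(0)=0$, every term of the expansion containing a bracket $[u,-]$ or a product of two or more factors of $u$ vanishes to second order, so the part of $\epsilon'$ linear in $t$ equals the part of $\epsilon$ linear in $t$ minus $\diff_{\mathcal{E}}$ applied to the part of $u$ linear in $t$. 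Evaluating in the direction $v$ gives $\epsilon_1'=\epsilon_1-\diff_{\mathcal{E}}(du_0(v))$ with $du_0(v)\in L^0_{\mathcal{E}}$, hence $[\epsilon_1']=[\epsilon_1]$ in $\Hom_{\underline{B}(X)}(\mathcal{E},\mathcal{E}[1])$. Together with the previous paragraph this shows $\KS$ is well defined on $T_0\Delta\times\text{Def}_{\Delta}(\mathcal{E})$ and linear in the first variable.

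The only slightly delicate step is the first-order expansion of the gauge action; I expect the main difficulty there to be purely bookkeeping, namely pinning down the normalization so that the correction term is exactly $\diff_{\mathcal{E}}$ of a degree-$0$ element, which is precisely what is needed for the two cohomology classes to coincide. Everything else is formal once Theorem~\ref{thm: deformation and Maurer-Cartan} and the description \eqref{eq: DGLA LE and morphism in B(X)} of $\Hom_{\underline{B}(X)}(\mathcal{E},\mathcal{E}[1])$ are in hand.
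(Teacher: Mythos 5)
Your proposal is correct and follows essentially the same route as the paper: reduce to Maurer--Cartan elements via Theorem \ref{thm: deformation and Maurer-Cartan}, observe that $\epsilon_1$ depends only on $v=f'(0)$ and linearly so, and check that gauge equivalence changes $\epsilon_1$ only by $\diff_{\mathcal{E}}$ of a degree-$0$ element. The paper's proof simply states these facts more tersely, while you spell out the first-order expansion of the gauge action, which is a correct and harmless elaboration.
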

\begin{proof}
We need to check that the cochain homotopy class of $[\epsilon_1]$ is independent of the choices of the holomorphic curve $f$, the representative of equivalent deformations $\mathfrak{F}$, and the strong deformation $\mathfrak{E}$. All of these are easy consequences of Theorem \ref{thm: deformation and Maurer-Cartan} and the fact that gauge equivalence does not change the cochain homotopy class of $[\epsilon_1]$.

The linearity on the first component is clear from the construction.
\end{proof}

\begin{rmk}
The construction in this section generalized the classical result that infinitesimal deformations of a coherent sheaf $\mathcal{F}$ on a scheme $X$ are in $1$-$1$ correspondence with elements in $\Ext^1_X(\mathcal{F},\mathcal{F})$ as in \cite[Theorem 2.7]{hartshorne2010deformation}. 
\end{rmk}

\section{Analytic theory of deformations of cohesive modules}\label{section: analytic theory}
\subsection{Graded Hermitian metrics and Laplacians}\label{subsection: metrics and Laplacians}
We can apply analytic method as in \cite[Section 7]{kobayashi2014differential} to  further study  deformations of cohesive modules.

Let $X$ be a compact complex manifold. We fix a Hermitian (not necessarily K\"{a}hler) metric $g$ on $X$. Let $g^{\wedge\overline{T^{*}X}}$ be the induced metric on $\wedge\overline{T^{*}X}$. Let $\mu_X$ be the associated volume form on $X$.

\begin{defi}\label{defi: Hermitian metric on cohesive module}
Let  $\mathcal{E}=(E^{\bullet}, A^{E^{\bullet}\prime\prime})$ be a cohesive module on a compact complex manifold $X$. A graded Hermitian metric on $\mathcal{E}$ is a metric $h$ on $E^{\bullet}$ such that $h|_{E^i}$ is Hermitian for each $i$ and $h(E^i,E^j)=0$ for $i\neq j$.

The graded Hermitian metric $h$ naturally induces a graded Hermitian metric on $\End^{\bullet}(E^{\bullet})$, which we still denote by $h$.
\end{defi}

\begin{rmk}
A graded Hermitian metric is called a pure metric in  \cite[Definition 4.4.1]{bismut2021coherent}.
\end{rmk}

We equip $\wedge^{\bullet}\overline{T^{*}X}\hat\otimes \End^{\bullet}(E^{\bullet})$ with the metric $g^{\wedge\overline{T^{*}X}}\otimes h$. In more details, for $\alpha=\sum s_i t_i$ and $\beta=\sum u_j w_j$ where $s_i, u_j\in \wedge^{\bullet}\overline{T^{*}X}$ and $t_i, w_j\in \End^{\bullet}(E^{\bullet})$, we have
\begin{equation}
\langle \alpha, \beta \rangle_{g^{\wedge\overline{T^{*}X}}\otimes h}:=\sum_{i,j} g^{\wedge\overline{T^{*}X}}(s_i, u_j) h(t_i, w_j).
\end{equation}

\begin{defi}
We define a global inner product $( \cdot, \cdot )_h$ on 
$$
L_{\mathcal{E}}^{\bullet}=\bigoplus_{a+b=\bullet}C^{\infty}(X,\wedge^{a}\overline{T^{*}X}\hat\otimes \End^b(E^{\bullet}))
$$
 by 
\begin{equation}
(\alpha, \beta )_h:=\int_X \langle \alpha, \beta \rangle_{g^{\wedge\overline{T^{*}X}}\otimes h} \mu_X.
\end{equation}
\end{defi}

Recall that we have the differential $\diff_{\mathcal{E}}$ on  $L_{\mathcal{E}}^{\bullet}$ induced by  $A^{E^{\bullet}\prime\prime}$. Let $\diff_{\mathcal{E}}^*$ be the adjoint of $\diff_{\mathcal{E}}$ with respect to $( \cdot, \cdot )_h$, i.e.
\begin{equation}
(\diff_{\mathcal{E}}\alpha, \beta)_h=(\alpha, \diff_{\mathcal{E}}^*\beta)_h.
\end{equation}
In more details, recall that in \eqref{eq: decomposition of anti super conn} we have the decomposition  
$$
A^{E^{\bullet}\prime\prime}=v_0+\nabla^{E^{\bullet}\prime\prime}+v_2+\ldots.
$$ 
We define
 $$
\nabla^{E^{\bullet}\prime\prime,*}: C^{\infty}(X,\wedge^{\bullet}\overline{T^{*}X}\hat\otimes \End^{\bullet}(E^{\bullet})) \to C^{\infty}(X,\wedge^{\bullet-1}\overline{T^{*}X}\hat\otimes \End^{\bullet}(E^{\bullet}))
$$ to be the unique operator such that
\begin{equation}
(\nabla^{E^{\bullet}\prime\prime}\alpha, \beta)_h=(\alpha, \nabla^{E^{\bullet}\prime\prime,*}\beta)_h;
\end{equation}
and for $i\neq 1$ we define
$$
v_i^*: C^{\infty}(X,\wedge^{\bullet}\overline{T^{*}X}\hat\otimes \End^{\bullet}(E^{\bullet})) \to C^{\infty}(X,\wedge^{\bullet-i}\overline{T^{*}X}\hat\otimes \End^{\bullet+i-1}(E^{\bullet}))
$$
to be the unique operator such that 
\begin{equation}
(v_i\alpha, \beta)_h=(\alpha, v_i^*\beta)_h.
\end{equation}
It is clear that $\nabla^{E^{\bullet}\prime\prime,*}$ is a first order differential operator and $v_i^*$ is $C^{\infty}(X)$-linear.

For $\phi\in L_{\mathcal{E}}^{k}$ with the decomposition $
\phi=\phi_0+\phi_1+\ldots$
as in \eqref{eq: decompose of phi}. Recall that \eqref{eq: diff E in LE shorter} tells us that the $l$th component of $\diff_{\mathcal{E}}\phi$ is
$$
(\diff_{\mathcal{E}}\phi)_l=\nabla^{E^{\bullet}\prime\prime}(\phi_{l-1})+\sum_{i\neq 1}[v_i, \phi_{l-i}].
$$
Then we have
\begin{equation}\label{eq: diff E star in LE shorter} 
(\diff_{\mathcal{E}}^*\phi)_l=\nabla^{E^{\bullet}\prime\prime, *}(\phi_{l+1})+\sum_{i\neq 1}[v_i^*, \phi_{l+i}].
\end{equation}
Similar to \eqref{eq: supercommutator in component}, if we write
$$
v_i=\omega\hat{\otimes} M\in C^{\infty}(X,\wedge^{i}\overline{T^{*}X}\hat\otimes \End^{1-i}(E^{\bullet})) 
$$
and
$$
\phi_{l+i}=\mu\hat{\otimes} N\in C^{\infty}(X,\wedge^{l+i}\overline{T^{*}X}\hat\otimes \End^{-i}(E^{\bullet})).
$$
Let $M^*\in  \End^{i-1}(E^{\bullet})$ be the adjoint of $M$ under $h$. Then we have
\begin{equation}
[v_i^*, \phi_{l+i}]:=(-1)^{(1-i)(l+i)}\iota_{\omega}\mu\hat{\otimes}[M^*,N].
\end{equation}

We can define the Laplacian $
\square^{\mathcal{E}}_h: L_{\mathcal{E}}^{\bullet}\to L_{\mathcal{E}}^{\bullet}$ as
\begin{equation}\label{eq: Laplacian}
\square^{\mathcal{E}}_h:=\diff_{\mathcal{E}}^*\diff_{\mathcal{E}}+\diff_{\mathcal{E}}\diff_{\mathcal{E}}^*.
\end{equation}

\begin{lemma}\label{lemma: Laplacian is elliptic}
The Laplacian $\square^{\mathcal{E}}_h$ is a second order elliptic differential operator.
\end{lemma}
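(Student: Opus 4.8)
The plan is to show that $\square^{\mathcal{E}}_h$ is a second-order differential operator and then compute its principal symbol, verifying that it is invertible off the zero section. First I would observe that among the building blocks of $\diff_{\mathcal{E}}$ and $\diff_{\mathcal{E}}^*$, namely the $v_i$, $v_i^*$ (for $i\neq 1$) and $\nabla^{E^{\bullet}\prime\prime}$, $\nabla^{E^{\bullet}\prime\prime,*}$, the operators $v_i$ and $v_i^*$ are $C^{\infty}(X)$-linear (order $0$) while $\nabla^{E^{\bullet}\prime\prime}$ and $\nabla^{E^{\bullet}\prime\prime,*}$ are first order. Hence $\diff_{\mathcal{E}}=\nabla^{E^{\bullet}\prime\prime}+(\text{order }0)$ and $\diff_{\mathcal{E}}^*=\nabla^{E^{\bullet}\prime\prime,*}+(\text{order }0)$ are first-order differential operators, so their composition $\square^{\mathcal{E}}_h=\diff_{\mathcal{E}}^*\diff_{\mathcal{E}}+\diff_{\mathcal{E}}\diff_{\mathcal{E}}^*$ is at most second order. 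A count of orders then shows the only second-order contribution comes from $\nabla^{E^{\bullet}\prime\prime,*}\nabla^{E^{\bullet}\prime\prime}+\nabla^{E^{\bullet}\prime\prime}\nabla^{E^{\bullet}\prime\prime,*}$; the cross terms with the $v_i$'s and the terms $[v_i^*,[v_j,-]]$ etc.\ are of order $\le 1$.

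Next I would identify the leading term. Since $\nabla^{E^{\bullet}\prime\prime}$ is a $\dbar$-connection on the graded bundle $\End^{\bullet}(E^{\bullet})$ coupled to the Dolbeault operator on $\wedge^{\bullet}\overline{T^*X}$, the operator $\nabla^{E^{\bullet}\prime\prime,*}\nabla^{E^{\bullet}\prime\prime}+\nabla^{E^{\bullet}\prime\prime}\nabla^{E^{\bullet}\prime\prime,*}$ is exactly the Dolbeault–Laplace-type operator $\square_{\dbar}$ associated with the connection $\nabla^{E^{\bullet}\prime\prime}$ and the chosen Hermitian metrics $g^{\wedge\overline{T^*X}}\otimes h$. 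Its principal symbol at a real covector $\xi\in T^*_xX$ is, up to a positive constant, $\|\xi^{0,1}\|^2\cdot\id$ on each graded piece, where $\xi^{0,1}$ is the $(0,1)$-part of $\xi$; because the metric $g$ is Hermitian, $\|\xi^{0,1}\|^2=\tfrac12\|\xi\|^2$, which is strictly positive for $\xi\neq 0$. (This is the standard symbol computation for $\dbar\dbar^*+\dbar^*\dbar$, identical to the scalar case since the $0$-order connection and bundle data do not affect the principal symbol.) Therefore $\sigma_2(\square^{\mathcal{E}}_h)(\xi)$ is invertible for $\xi\neq 0$, i.e.\ $\square^{\mathcal{E}}_h$ is elliptic.

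I would carry the steps out in this order: (i) record the orders of each constituent operator using the $C^{\infty}(X)$-linearity of the $v_i,v_i^*$; (ii) expand $\square^{\mathcal{E}}_h$ and isolate the second-order part, checking that all terms involving at least one $v_i$ or $v_i^*$ drop in order; (iii) recognize the leading term as the connection Laplacian $\square_{\nabla^{E^{\bullet}\prime\prime}}$ and invoke (or quote) the standard symbol computation $\sigma(\square_{\dbar})(\xi)=\|\xi^{0,1}\|^2$; (iv) conclude ellipticity. The main obstacle is step (ii): one must be careful with the graded/super signs in the formulas \eqref{eq: diff E in LE shorter} and \eqref{eq: diff E star in LE shorter} and with the components that mix form-degree and endomorphism-degree, to be sure that no hidden second-order term arises from, say, $\nabla^{E^{\bullet}\prime\prime,*}\circ\nabla^{E^{\bullet}\prime\prime}$ acting across different graded components — but since $\nabla^{E^{\bullet}\prime\prime}$ raises form-degree by one and $\nabla^{E^{\bullet}\prime\prime,*}$ lowers it by one, the bookkeeping is the same as in the classical Dolbeault setting and the argument goes through.
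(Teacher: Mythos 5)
Your proposal is correct and follows essentially the same route as the paper: the paper's proof simply notes that the highest order part of $\square^{\mathcal{E}}_h$ is $\nabla^{E^{\bullet}\prime\prime,*}\nabla^{E^{\bullet}\prime\prime}+\nabla^{E^{\bullet}\prime\prime}\nabla^{E^{\bullet}\prime\prime,*}$ (since the $v_i$, $v_i^*$ are $C^{\infty}(X)$-linear) and declares ellipticity obvious. Your write-up just makes explicit the symbol computation that the paper leaves implicit.
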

\begin{proof}
The highest order component of $\square^{\mathcal{E}}_h$ is $\nabla^{E^{\bullet}\prime\prime, *}\nabla^{E^{\bullet}\prime\prime}+\nabla^{E^{\bullet}\prime\prime}\nabla^{E^{\bullet}\prime\prime, *}$. The claim is then obvious.
\end{proof}

\subsection{Sobolev spaces and the Green operator}\label{subsection: Sobolev spaces}
So far we have only considered $C^{\infty}$-sections of $\wedge^{\bullet}\overline{T^{*}X}\hat\otimes \End^{\bullet}(E^{\bullet})$. For later purpose we need to consider Sobolev sections too. 
In more details, we pick and fix a $\diff$-connection $\nabla$ on the graded vector bundle  $\wedge^{\bullet}\overline{T^{*}X}\hat{\otimes}\End^{\bullet}(E^{\bullet})$.  The metric $g$ and $h$ induce a metric on the graded bundle $\wedge^{\bullet}T^{*}_\mathbb{C}X\hat{\otimes}\wedge^{\bullet}\overline{T^{*}X}\hat{\otimes}\End^{\bullet}(E^{\bullet})$.  Therefore for an integer $k\geq 0$, we define the $W_{k}$-norm $\lVert \cdot\rVert_{k}$ as
\begin{equation}
\lVert v\rVert_{k}:=(\sum_{i\leq k}\int_X \lVert \nabla^i v\rVert^2 \mu_X)^{\frac{1}{2}}.
\end{equation}
Let $W_{L_{\mathcal{E}}^{\bullet}}^k$ be the completion of  $C^{\infty}(X,\wedge^{\bullet}\overline{T^{*}X}\hat\otimes \End^{\bullet}(E^{\bullet}))$ under the norm $\lVert \cdot\rVert_{k}$. We know that $W_{L_{\mathcal{E}}^{\bullet}}^k$ is a Hilbert space and we have the natural inclusions:
\begin{equation}
L_{\mathcal{E}}^{\bullet}\subset\ldots \subset W_{L_{\mathcal{E}}^{\bullet}}^k \subset W_{L_{\mathcal{E}}^{\bullet}}^{k-1}\subset\ldots \subset W_{L_{\mathcal{E}}^{\bullet}}^0
\end{equation}
where the inclusion map $ W_{L_{\mathcal{E}}^{\bullet}}^k \subset W_{L_{\mathcal{E}}^{\bullet}}^{k-1}$ is compact. Moreover, we have
\begin{equation}
L_{\mathcal{E}}^{\bullet}=\bigcap_{k=0}^{\infty} W_{L_{\mathcal{E}}^{\bullet}}^k 
\end{equation}

Since $\square^{\mathcal{E}}_h$ is a second order differential operator, it extends to a continuous map
\begin{equation}
\square^{\mathcal{E}}_h:W_{L_{\mathcal{E}}^{\bullet}}^k\to W_{L_{\mathcal{E}}^{\bullet}}^{k-2}.
\end{equation}
Since $\square^{\mathcal{E}}_h$ is elliptic, we know that
\begin{equation}
L_{\mathcal{E}}^{\bullet}\cap \ker \square^{\mathcal{E}}_h=W_{L_{\mathcal{E}}^{\bullet}}^k\cap \ker \square^{\mathcal{E}}_h, \text{ for each }k.
\end{equation}

For each $i$, let 
\begin{equation}
\mathbf{H}L_{\mathcal{E}}^i:=L_{\mathcal{E}}^i\cap \ker \square^{\mathcal{E}}_h
\end{equation}
be the subspace of harmonic forms in $L_{\mathcal{E}}^i$.
Therefore we have the orthogonal projection $
H: W_{L_{\mathcal{E}}^{\bullet}}^0 \to \mathbf{H} L_{\mathcal{E}}^{\bullet}
$.  It is clear that
\begin{equation}\label{eq: harmonic forms d d star}
\mathbf{H}L_{\mathcal{E}}^i=\ker\diff_{\mathcal{E}}\cap \ker\diff_{\mathcal{E}}^*\cap L_{\mathcal{E}}^i.
\end{equation}

\begin{rmk}
In general $\id_{E^{\bullet}}$ is in $\ker\diff_{\mathcal{E}}\cap L_{\mathcal{E}}^0$ but is not necessarily in $\mathbf{H}L_{\mathcal{E}}^0$.
\end{rmk}

 Let $(\mathbf{H}L_{\mathcal{E}}^{\bullet})^{\bot}$ be the orthogonal complement of $\mathbf{H}L_{\mathcal{E}}^{\bullet}$ in  $W_{L_{\mathcal{E}}^{\bullet}}^0$. Then we have
\begin{equation}
L_{\mathcal{E}}^{\bullet}=\mathbf{H}L_{\mathcal{E}}^{\bullet}\oplus ((\mathbf{H}L_{\mathcal{E}}^{\bullet})^{\bot}\cap L_{\mathcal{E}}^{\bullet}).
\end{equation}

Again since $\square^{\mathcal{E}}_h$ is elliptic, we can define the Green operator  $G: W_{L_{\mathcal{E}}^{\bullet}}^k\to W_{L_{\mathcal{E}}^{\bullet}}^{k+2}$ which is continuous and satisfies
\begin{equation}\label{eq: Hodge projection and Green operator}
\id_{W_{L_{\mathcal{E}}^{\bullet}}^0}=H+\square^{\mathcal{E}}_h G
\end{equation}
and
\begin{equation}\label{eq: Green operator commutes}
G \diff_{\mathcal{E}}=\diff_{\mathcal{E}}G, ~G\diff_{\mathcal{E}}^*=\diff_{\mathcal{E}}^*G, ~\square^{\mathcal{E}}_h G=G\square^{\mathcal{E}}_h.
\end{equation}
In particular, for each $k$, there exists a constant $c>0$ such that 
\begin{equation}\label{eq: bounded operators}
\lVert Gv\rVert_{k+2}\leq c \lVert v\rVert_{k} \text{ and } \lVert \diff_{\mathcal{E}}^* Gv\rVert_{k+1}\leq c \lVert v\rVert_{k}
\end{equation}
for any $v\in L_{\mathcal{E}}^{\bullet}$, and 
\begin{equation}\label{eq: bounded commutators}
\lVert [u,v]\rVert_{k+1}\leq c \lVert u\rVert_{k} \lVert v\rVert_{k}
\end{equation}
for any $u$, $v\in L_{\mathcal{E}}^{\bullet}$.

Equation \eqref{eq: Hodge projection and Green operator} gives the orthogonal decomposition
\begin{equation}\label{eq: decomposition of L E}
L_{\mathcal{E}}^{\bullet}=\text{Im }\diff_{\mathcal{E}}\oplus \text{Im }\diff_{\mathcal{E}}^*\oplus  \mathbf{H} L_{\mathcal{E}}^{\bullet}.
\end{equation}
Then by \eqref{eq: DGLA LE and morphism in B(X)} we have $\mathbf{H}L_{\mathcal{E}}^i\cong \Hom_{\underline{B}(X)}(\mathcal{E},\mathcal{E}[i])$ for each $i$.

\subsection{The slice space}\label{subsection: the slice space}
\begin{defi}\label{defi: slice in Maurer-Cartan}
We consider the \emph{slice space} $\mathcal{S}_{\diff_{\mathcal{E}}}$ defined by
\begin{equation}\label{eq: slice in Maurer-Cartan}
\mathcal{S}_{\diff_{\mathcal{E}}}:=\{\alpha\in  L_{\mathcal{E}}^1| \diff_{\mathcal{E}}\alpha+\frac{1}{2}[\alpha,\alpha]=0 \text{ and }\diff_{\mathcal{E}}^*\alpha=0\}.
\end{equation}
\end{defi}
It is clear that $0\in \mathcal{S}_{\diff_{\mathcal{E}}}$. Roughly speaking, the condition $\diff_{\mathcal{E}}^*\alpha=0$ means that $\mathcal{S}_{\diff_{\mathcal{E}}}$ is perpendicular to the orbit of the gauge equivalence on the set of Maurer-Cartan elements.

We can make the above picture more precise. Recall that we have  completed tensor products $\mathcal{S}_{\diff_{\mathcal{E}}}\otimes \mathcal{M}(\Delta)$ and $L_{\mathcal{E}}\otimes \mathcal{M}(\Delta)$ as in  Remark \ref{rmk: the topological tensor product}. Also recall we denote the set of gauge equivalent classes of Maurer-Cartan elements in $ L_{\mathcal{E}}\otimes \mathcal{M}(\Delta)$ by MC$(L_{\mathcal{E}}\otimes \mathcal{M}(\Delta))$.

\begin{prop}\label{prop: slice is bijective}
The natural map
\begin{equation}\label{eq: slice to Maurer-Cartan}
\begin{split}
p: \mathcal{S}_{\diff_{\mathcal{E}}}\otimes \mathcal{M}(\Delta)&\to \text{MC}(L_{\mathcal{E}}\otimes \mathcal{M}(\Delta))\\
\alpha(t)&\mapsto [\alpha(t)]
\end{split}
\end{equation}
is surjective for $\Delta$ sufficiently small.
\end{prop}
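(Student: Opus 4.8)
The plan is to \emph{gauge-fix}. Every class in $\text{MC}(L_{\mathcal{E}}\otimes\mathcal{M}(\Delta))$ is represented by some Maurer--Cartan element $\epsilon(t)\in L^1_{\mathcal{E}}\otimes\mathcal{M}(\Delta)$ (for instance the one attached to a strong deformation in Proposition \ref{prop: Maurer-Cartan in E}), and I want to conjugate it by a gauge transformation $e^{u(t)}$ with $u(t)\in L^0_{\mathcal{E}}\otimes\mathcal{M}(\Delta)$ so that
\begin{equation}
\alpha(t):=e^{u(t)}\circ(\diff_{\mathcal{E}}+\epsilon(t))\circ e^{-u(t)}-\diff_{\mathcal{E}}
\end{equation}
satisfies $\diff_{\mathcal{E}}^*\alpha(t)=0$. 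Since the gauge action preserves the Maurer--Cartan equation, $\alpha(t)$ is then automatically a Maurer--Cartan element lying in the slice, so $\alpha(t)\in\mathcal{S}_{\diff_{\mathcal{E}}}\otimes\mathcal{M}(\Delta)$ and, by the convention in \eqref{eq: gauge equivalent}, $p(\alpha(t))=[\alpha(t)]=[\epsilon(t)]$; this is exactly the surjectivity we want.

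To produce $u(t)$ I would use the implicit function theorem on the Sobolev completions of Section \ref{subsection: Sobolev spaces}. Note that $\epsilon\mapsto e^{u}\circ(\diff_{\mathcal{E}}+\epsilon)\circ e^{-u}-\diff_{\mathcal{E}}$ is affine in $\epsilon$ and entire in $u$, so, fixing $k$ large enough that $W^k_{L^\bullet_{\mathcal{E}}}$ is a Banach algebra under the graded product, it extends to a holomorphic map near the origin of $W^{k+1}_{L^0_{\mathcal{E}}}\times W^k_{L^1_{\mathcal{E}}}$. Using \eqref{eq: decomposition of L E}, restrict $u$ to the orthogonal complement of $\ker\diff_{\mathcal{E}}$ in $L^0_{\mathcal{E}}$ (the $\text{Im }\diff_{\mathcal{E}}^*$ summand) and consider
\begin{equation}
\Psi(u,\epsilon):=\diff_{\mathcal{E}}^*\bigl(e^{u}\circ(\diff_{\mathcal{E}}+\epsilon)\circ e^{-u}-\diff_{\mathcal{E}}\bigr),
\end{equation}
which takes values in the $\text{Im }\diff_{\mathcal{E}}^*$ part of $W^{k-1}_{L^0_{\mathcal{E}}}$ since anything of the form $\diff_{\mathcal{E}}^*(\cdot)$ is coexact. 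Then $\Psi(0,0)=0$, and differentiating in $u$ at the origin gives $D_u\Psi(0,0)(w)=-\diff_{\mathcal{E}}^*\diff_{\mathcal{E}}w$; on $\text{Im }\diff_{\mathcal{E}}^*\cap L^0_{\mathcal{E}}$ this coincides with $\square^{\mathcal{E}}_h$ (because $\diff_{\mathcal{E}}^*$ annihilates that subspace), which by ellipticity (Lemma \ref{lemma: Laplacian is elliptic}) and the Green operator, cf. \eqref{eq: Hodge projection and Green operator}--\eqref{eq: bounded operators}, is a Banach-space isomorphism onto the $\text{Im }\diff_{\mathcal{E}}^*$ part of $W^{k-1}_{L^0_{\mathcal{E}}}$. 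The holomorphic implicit function theorem then yields a holomorphic map $\epsilon\mapsto u(\epsilon)$ near $0$ with $u(0)=0$ and $\Psi(u(\epsilon),\epsilon)=0$. Shrinking $\Delta$ so that $t\mapsto\epsilon(t)$ stays in this neighbourhood, set $u(t):=u(\epsilon(t))$, which is holomorphic in $t$ and vanishes at $t=0$.

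It remains to see that $u(t)$ is smooth on $X$, so that $\alpha(t)$ really lies in $L^1_{\mathcal{E}}\otimes\mathcal{M}(\Delta)$. For each fixed $t$, $u(t)$ solves $\Psi(u(t),\epsilon(t))=0$, a quasilinear elliptic equation for $u(t)$ whose leading part is $\square^{\mathcal{E}}_h$ (up to a perturbation that is small for $u(t)$ near $0$) and whose inhomogeneity $\epsilon(t)$ is $C^\infty$; elliptic regularity, applied with holomorphic dependence on the parameter $t$, bootstraps $u(t)$ to a section of $L^0_{\mathcal{E}}$ that is smooth on $X$ and holomorphic in $t$, i.e. $u(t)\in L^0_{\mathcal{E}}\otimes\mathcal{M}(\Delta)$. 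Then $\alpha(t)=e^{u(t)}\circ(\diff_{\mathcal{E}}+\epsilon(t))\circ e^{-u(t)}-\diff_{\mathcal{E}}$ lies in $\mathcal{S}_{\diff_{\mathcal{E}}}\otimes\mathcal{M}(\Delta)$ and $p(\alpha(t))=[\epsilon(t)]$, which proves surjectivity.

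I expect the main obstacle to be the analytic bookkeeping rather than the algebra: making precise that the complex gauge action is a well-defined holomorphic map between the relevant Sobolev spaces, that $D_u\Psi(0,0)$ is genuinely an isomorphism onto the slice complement, and that the parametrized elliptic regularity delivers smoothness in $X$ and holomorphy in $t$ simultaneously. As an alternative that stays closer to the machinery already built, one can bypass the implicit function theorem and construct $u(t)$ order by order in $t$: writing $\epsilon(t)=\sum\epsilon_N t^N$, $u(t)=\sum u_N t^N$, $\alpha(t)=\sum\alpha_N t^N$, and letting $R_N$ be the part of $e^{u}\circ(\diff_{\mathcal{E}}+\epsilon)\circ e^{-u}-\diff_{\mathcal{E}}$ of $t$-order $N$ assembled from $u_1,\dots,u_{N-1}$ and $\epsilon_1,\dots,\epsilon_N$, one sets $u_N:=-\diff_{\mathcal{E}}^*GR_N$, so that $\alpha_N=HR_N+\diff_{\mathcal{E}}^*G\diff_{\mathcal{E}}R_N$ is $\diff_{\mathcal{E}}^*$-closed, and then proves convergence of the resulting series for $\Delta$ small by the Kodaira--Spencer majorant method using \eqref{eq: bounded operators}--\eqref{eq: bounded commutators}.
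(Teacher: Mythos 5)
Your proposal is correct and follows essentially the same route as the paper: gauge-fix onto the slice by solving $\diff_{\mathcal{E}}^*\bigl(\text{gauge-transformed MC element}\bigr)=0$ for $u$ in the $\text{Im }\diff_{\mathcal{E}}^*$ summand of $L^0_{\mathcal{E}}$, observe that the linearization at the origin is $\square^{\mathcal{E}}_h$ (invertible on that summand via the Green operator), apply the Banach implicit function theorem, and recover smoothness on $X$ and holomorphy in $t$ by elliptic regularity. The only cosmetic difference is that the paper parametrizes the gauge transformation as $\id_{E^{\bullet}}+v$ and sets $u=\log(\id_{E^{\bullet}}+v)$ at the end rather than working with $e^{u}$ directly.
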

\begin{proof}
The proof is similar to that of \cite[Theorem 7.3.17]{kobayashi2014differential}.
Let $\beta(t)\in L_{\mathcal{E}}^1\otimes \mathcal{M}(\Delta)$ be a Maurer-Cartan element. We need to prove that there exists an  $u(t)\in L_{\mathcal{E}}^0\otimes \mathcal{M}(\Delta)$ such that
\begin{equation}
e^{-u(t)}\circ (\diff_{\mathcal{E}}+\beta(t))\circ e^{u(t)}-\diff_{\mathcal{E}}\in \mathcal{S}_{\diff_{\mathcal{E}}}.
\end{equation}
It is clear that $e^{-u}\circ (\diff_{\mathcal{E}}+\beta(t))\circ e^{u}-\diff_{\mathcal{E}}$ is still a Maurer-Cartan element for any $u$, so it remains to prove that  we can find a $u(t)$ such that
\begin{equation}
\diff_{\mathcal{E}}^*(e^{-u(t)}\circ (\diff_{\mathcal{E}}+\beta(t))\circ e^{u(t)}-\diff_{\mathcal{E}})=0.
\end{equation}

Let $V$ denote the subspace $\text{Im }\diff_{\mathcal{E}}^*\subset L_{\mathcal{E}}^0$. Hence
\begin{equation}\label{eq: Laplacian is half on V}
\diff_{\mathcal{E}}^*\diff_{\mathcal{E}} v=\square^{\mathcal{E}}_h v \text{ for } v\in V.
\end{equation}

Let $\Omega$ be a small neighborhood of $0$ in $V$. We define a map $F_{\beta}: \Delta\times \Omega\to V$ by
\begin{equation}\label{eq: F V to V}
F_{\beta}(t,v):=\diff_{\mathcal{E}}^*\Big((\id_{E^{\bullet}}+v)^{-1}\circ (\diff_{\mathcal{E}}+\beta(t))\circ (\id_{E^{\bullet}}+v)-\diff_{\mathcal{E}}\Big).
\end{equation}

For $k\geq 0$, we define the Sobolev completions $W_V^k$ and $W_V^{k+2}$ as in Section \ref{subsection: Sobolev spaces}. We can extend the map $F_{\beta}$ in \eqref{eq: F V to V} to a map $F_{\beta}: \Delta\times \tilde{\Omega}\to W_V^k$, 
where $\tilde{\Omega}$ is a small neighborhood of $0$ in $W_V^{k+2}$.

We know that $\beta(0)=0$ hence
\begin{equation}
\begin{split}
F_{\beta}(0,v)=&\diff_{\mathcal{E}}^*\Big((\id_{E^{\bullet}}+v)^{-1}\circ \diff_{\mathcal{E}}\circ (\id_{E^{\bullet}}+v)-\diff_{\mathcal{E}}\Big)\\
=&\diff_{\mathcal{E}}^*\diff_{\mathcal{E}} v\\
=&\square^{\mathcal{E}}_h v
\end{split}
\end{equation} 
where the last equality is obtained by \eqref{eq: Laplacian is half on V}. Hence $F_{\beta}(0,0)=0$ and  the differential of $F_{\beta}$ at $(0,0)$ is
\begin{equation}\label{eq: differential of F at 0}
\frac{\partial F_{\beta}}{\partial v}\big|_{t=0, v=0}=\square^{\mathcal{E}}_h:  W_V^{k+2}\to W_V^k.
\end{equation}
Moreover, by \eqref{eq: Hodge projection and Green operator} and  \eqref{eq: decomposition of L E} we know that 
$\square^{\mathcal{E}}_h:  W_V^{k+2}\to W_V^k$ is invertible. 
Then by the implicit function theorem of Banach spaces \cite[Theorem 7.13-1]{ciarlet2013linear}, for $\Delta$ sufficiently small, there exists a unique $C^{\infty}$ map $v: \Delta\to W_V^{k+2}$ such that $v(0)=0$ and 
 \begin{equation}\label{eq: F(t vt)}
F_{\beta}(t,v(t))=\diff_{\mathcal{E}}^*\Big((\id_{E^{\bullet}}+v(t))^{-1}\circ (\diff_{\mathcal{E}}+\beta(t))\circ (\id_{E^{\bullet}}+v(t))-\diff_{\mathcal{E}}\Big)=0
\end{equation}
 for any $t\in \Delta$.
Moreover, since $\beta(t)$ is a holomorphic function of $t$, we know that $ \frac{\partial F_{\beta}}{\dbar t}\equiv 0$, hence
\begin{equation}
\frac{\partial v}{\dbar t}=-\Big(\frac{\partial F_{\beta}}{\partial v}\Big)^{-1}\circ  \frac{\partial F_{\beta}}{\dbar t} \equiv -\Big(\frac{\partial F_{\beta}}{\partial v}\Big)^{-1}\circ 0=0,
\end{equation}
i.e. $v(t)$ is a holomorphic function of $t$.

We still need to show that $v$ gives a $C^{\infty}$-section of $\bigoplus_{i+j=0}\wedge^{i}\overline{T^{*}X}\hat{\otimes} \End^j(E^{\bullet})$ on $X\times \Delta$. 
Actually
\begin{equation}
\begin{split}
0=F_{\beta(t)}(v(t))=&\diff_{\mathcal{E}}^*\Big((\id_{E^{\bullet}}+v(t))^{-1}\circ (\diff_{\mathcal{E}}+\beta(t))\circ (\id_{E^{\bullet}}+v(t))-\diff_{\mathcal{E}}\Big)\\
=&v(t)^{-1}(\diff_{\mathcal{E}}^*\diff_{\mathcal{E}}v(t)+\text{ lower degree terms})\\
=&v(t)^{-1}(\square^{\mathcal{E}}_h v(t)+\text{ lower degree terms}).
\end{split}
\end{equation}
Moreover $v(t)$ is holomophic so $\frac{\partial }{\partial t}\frac{\partial }{\dbar t} v(t)=0$ hence we have 
\begin{equation}
\Big(\square^{\mathcal{E}}_h +\frac{\partial }{\partial t}\frac{\partial }{\dbar t}\Big) v(t)+\text{ lower degree terms}=0.
\end{equation}
Since $\square^{\mathcal{E}}_h +\frac{\partial }{\partial t}\frac{\partial }{\dbar t}$ is a second order elliptic differential operator on $X\times \Delta$, by elliptic regularity we know $v$ is $C^{\infty}$.

Now we let 
\begin{equation}
u:=\log(\id_{E^{\bullet}}+v)
\end{equation}
hence $e^u=\id_{E^{\bullet}}+v$. For $t$ sufficiently close to $0$, we know that $v(t)$ is sufficiently close to $0$ too. Hence $u$ is well-defined and
\begin{equation}
u\in \bigoplus_{i+j=0} C^{\infty}(X\times \Delta, \wedge^{i}\overline{T^{*}X}\hat{\otimes} \End^j(E^{\bullet}))
\end{equation} 
The claim is now a consequence of \eqref{eq: F(t vt)}.
\end{proof}

\begin{rmk}
The injectivity of the map $p$ in \eqref{eq: slice to Maurer-Cartan} is more subtle. We may impose a generalized  \emph{simplicity} condition of $\mathcal{E}$ and prove the injectivity in the more general sense as in \cite[Theorem 7.3.17]{kobayashi2014differential}. This is the topic of  future studies.
\end{rmk}

\subsection{The Kuranishi map}\label{subsection: Kuranishi map}
\begin{lemma}\label{lemma: orthogonal decomposition of Maurer-Cartan equation}
An element $\alpha\in L_{\mathcal{E}}^1$ is a Maurer-Cartan element if and only if it satisfies
\begin{align}
\diff_{\mathcal{E}}(\alpha+\frac{1}{2}\diff_{\mathcal{E}}^*G[\alpha,\alpha])=0, \label{eq: orthogonal decomposition of Maurer-Cartan equation1}\\
\diff_{\mathcal{E}}^*\diff_{\mathcal{E}}G[\alpha,\alpha]=0, \label{eq: orthogonal decomposition of Maurer-Cartan equation2}\\
H[\alpha,\alpha]=0. \label{eq: orthogonal decomposition of Maurer-Cartan equation3}
\end{align}
\end{lemma}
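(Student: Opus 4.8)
The plan is to decompose the Maurer-Cartan equation $\diff_{\mathcal{E}}\alpha+\frac{1}{2}[\alpha,\alpha]=0$ along the orthogonal splitting of $L_{\mathcal{E}}^\bullet$ provided by Hodge theory, namely \eqref{eq: decomposition of L E}, and to use the fundamental identity \eqref{eq: Hodge projection and Green operator}, $\id = H + \square^{\mathcal{E}}_h G = H + \diff_{\mathcal{E}}\diff_{\mathcal{E}}^* G + \diff_{\mathcal{E}}^*\diff_{\mathcal{E}}G$, applied to $[\alpha,\alpha]\in L_{\mathcal{E}}^2$. First I would write the Maurer-Cartan equation as $\diff_{\mathcal{E}}\alpha = -\frac{1}{2}[\alpha,\alpha]$ and observe that a necessary condition is that the right-hand side lies in $\Image\diff_{\mathcal{E}}$; applying the Hodge decomposition to $[\alpha,\alpha]$ this forces both the harmonic part $H[\alpha,\alpha]=0$ and the coexact part $\diff_{\mathcal{E}}^*\diff_{\mathcal{E}}G[\alpha,\alpha]=0$ to vanish (these are exactly \eqref{eq: orthogonal decomposition of Maurer-Cartan equation3} and \eqref{eq: orthogonal decomposition of Maurer-Cartan equation2}), leaving only $[\alpha,\alpha]=\diff_{\mathcal{E}}\diff_{\mathcal{E}}^*G[\alpha,\alpha]$; substituting this back into the Maurer-Cartan equation gives $\diff_{\mathcal{E}}(\alpha+\frac{1}{2}\diff_{\mathcal{E}}^*G[\alpha,\alpha])=0$, which is \eqref{eq: orthogonal decomposition of Maurer-Cartan equation1}.

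Conversely, I would assume \eqref{eq: orthogonal decomposition of Maurer-Cartan equation1}, \eqref{eq: orthogonal decomposition of Maurer-Cartan equation2}, \eqref{eq: orthogonal decomposition of Maurer-Cartan equation3} hold and reconstruct the Maurer-Cartan equation. From \eqref{eq: orthogonal decomposition of Maurer-Cartan equation3} and \eqref{eq: orthogonal decomposition of Maurer-Cartan equation2}, the identity $[\alpha,\alpha] = H[\alpha,\alpha] + \diff_{\mathcal{E}}\diff_{\mathcal{E}}^*G[\alpha,\alpha] + \diff_{\mathcal{E}}^*\diff_{\mathcal{E}}G[\alpha,\alpha]$ collapses (using that $\diff_{\mathcal{E}}^*\diff_{\mathcal{E}}G[\alpha,\alpha]$ vanishes once we know $\diff_{\mathcal{E}}^*$ of it is zero — here one needs that $\diff_{\mathcal{E}}^*\diff_{\mathcal{E}}G[\alpha,\alpha]$ is itself $\diff_{\mathcal{E}}^*$-exact hence the condition $\diff_{\mathcal{E}}^*(\diff_{\mathcal{E}}G[\alpha,\alpha])=0$ forces it to be zero by injectivity of $\diff_{\mathcal{E}}^*$ on $\Image\diff_{\mathcal{E}}$) to $[\alpha,\alpha]=\diff_{\mathcal{E}}\diff_{\mathcal{E}}^*G[\alpha,\alpha]$. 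Then \eqref{eq: orthogonal decomposition of Maurer-Cartan equation1} reads $\diff_{\mathcal{E}}\alpha = -\frac{1}{2}\diff_{\mathcal{E}}\diff_{\mathcal{E}}^*G[\alpha,\alpha] = -\frac{1}{2}[\alpha,\alpha]$, which is precisely the Maurer-Cartan equation.

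The main subtlety — and the one place the argument needs care rather than bookkeeping — is the converse direction: a priori $\diff_{\mathcal{E}}^*\diff_{\mathcal{E}}G[\alpha,\alpha]=0$ only says this coexact term has vanishing codifferential, so I must argue it is genuinely zero. This follows because $\diff_{\mathcal{E}}^*\diff_{\mathcal{E}}G[\alpha,\alpha]$ lies in $\Image\diff_{\mathcal{E}}^*$, and on $\Image\diff_{\mathcal{E}}^*$ the operator $\diff_{\mathcal{E}}^*$ has trivial kernel (equivalently $\square^{\mathcal{E}}_h$ is invertible on $\Image\diff_{\mathcal{E}}^*$, as used in \eqref{eq: Laplacian is half on V}); alternatively one pairs with $\diff_{\mathcal{E}}G[\alpha,\alpha]$ and uses that $G$ commutes with $\diff_{\mathcal{E}}$ and $\diff_{\mathcal{E}}^*$ (cf. \eqref{eq: Green operator commutes}) to see $\lVert \diff_{\mathcal{E}}G[\alpha,\alpha]\rVert^2 = (\diff_{\mathcal{E}}^*\diff_{\mathcal{E}}G[\alpha,\alpha], G[\alpha,\alpha])_h = 0$, so $\diff_{\mathcal{E}}G[\alpha,\alpha]=0$ and hence the coexact term vanishes. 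The rest is a direct manipulation using $\square^{\mathcal{E}}_h G = G \square^{\mathcal{E}}_h$, $H+\square^{\mathcal{E}}_h G = \id$, and the orthogonality of the three summands in \eqref{eq: decomposition of L E}; I would also note that $[\alpha,\alpha]$ indeed lands in $L_{\mathcal{E}}^2$ so that $G$ and $H$ apply to it, and that all identities are pointwise-in-$t$ statements so they pass without change to the completed tensor product with $\mathcal{M}(\Delta)$ if needed later.
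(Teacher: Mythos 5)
Your proof is correct and uses essentially the same Hodge-theoretic argument as the paper, which packages both directions into the single identity $\diff_{\mathcal{E}}\alpha+\frac{1}{2}[\alpha,\alpha]=\frac{1}{2}H[\alpha,\alpha]+\diff_{\mathcal{E}}\bigl(\alpha+\frac{1}{2}\diff_{\mathcal{E}}^*G[\alpha,\alpha]\bigr)+\frac{1}{2}\diff_{\mathcal{E}}^*\diff_{\mathcal{E}}G[\alpha,\alpha]$ and then invokes the orthogonality of the three summands in \eqref{eq: decomposition of L E}. One remark: the ``main subtlety'' you flag in the converse is vacuous, since \eqref{eq: orthogonal decomposition of Maurer-Cartan equation2} asserts the vanishing of the coexact component $\diff_{\mathcal{E}}^*\diff_{\mathcal{E}}G[\alpha,\alpha]$ itself, not merely of its codifferential, so no injectivity argument is needed.
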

\begin{proof}
Apply \eqref{eq: Hodge projection and Green operator} to $\diff_{\mathcal{E}}\alpha+\frac{1}{2}[\alpha,\alpha]$, we get
\begin{equation}\label{eq: Maurer-Cartan variation1}
\diff_{\mathcal{E}}\alpha+\frac{1}{2}[\alpha,\alpha]=H(\diff_{\mathcal{E}}\alpha+\frac{1}{2}[\alpha,\alpha])+\square^{\mathcal{E}}_h G(\diff_{\mathcal{E}}\alpha+\frac{1}{2}[\alpha,\alpha])
\end{equation}
Since $\square^{\mathcal{E}}_h=\diff_{\mathcal{E}}^*\diff_{\mathcal{E}}+\diff_{\mathcal{E}}\diff_{\mathcal{E}}^*$, by \eqref{eq: Green operator commutes} we get
\begin{equation}\label{eq: Maurer-Cartan variation2}
\square^{\mathcal{E}}_h G(\diff_{\mathcal{E}}\alpha+\frac{1}{2}[\alpha,\alpha])=\diff_{\mathcal{E}}(\diff_{\mathcal{E}}^*G\diff_{\mathcal{E}}\alpha+\frac{1}{2}\diff_{\mathcal{E}}^*G[\alpha,\alpha])+\frac{1}{2}\diff_{\mathcal{E}}^*\diff_{\mathcal{E}}G[\alpha,\alpha].
\end{equation}
Again by \eqref{eq: Hodge projection and Green operator} and \eqref{eq: Green operator commutes} we have
\begin{equation}\label{eq: Maurer-Cartan variation3}
\diff_{\mathcal{E}}^*G\diff_{\mathcal{E}}\alpha=\square^{\mathcal{E}}_h G\alpha-\diff_{\mathcal{E}}\diff_{\mathcal{E}}^*G\alpha=\alpha-H\alpha-\diff_{\mathcal{E}}\diff_{\mathcal{E}}^*G\alpha.
\end{equation}
Combine \eqref{eq: Maurer-Cartan variation1}, \eqref{eq: Maurer-Cartan variation2},   and \eqref{eq: Maurer-Cartan variation3} we get
\begin{equation}\label{eq: Maurer-Cartan variation4}
\diff_{\mathcal{E}}\alpha+\frac{1}{2}[\alpha,\alpha]=\frac{1}{2}H[\alpha,\alpha]+\diff_{\mathcal{E}}(\alpha+\frac{1}{2}\diff_{\mathcal{E}}^*G[\alpha,\alpha])+\frac{1}{2}\diff_{\mathcal{E}}^*\diff_{\mathcal{E}}G[\alpha,\alpha].
\end{equation}

The equivalence of the Maurer-Cartan equation and \eqref{eq: orthogonal decomposition of Maurer-Cartan equation1}, \eqref{eq: orthogonal decomposition of Maurer-Cartan equation2}, \eqref{eq: orthogonal decomposition of Maurer-Cartan equation3} now follows from \eqref{eq: decomposition of L E} and \eqref{eq: Maurer-Cartan variation4}.
\end{proof}

\begin{defi}\label{defi: Kuranishi map}
We define the Kuranishi map $\ku: L_{\mathcal{E}}^1\to L_{\mathcal{E}}^1$ by
\begin{equation}\label{eq: Kuranishi map}
\ku(\alpha)=\alpha+\frac{1}{2}\diff_{\mathcal{E}}^*G[\alpha,\alpha].
\end{equation}
\end{defi}

\begin{rmk}
To simplify the notation, we denote $\ku\otimes \id: L_{\mathcal{E}}^1\otimes \mathcal{M}(\Delta)\to L_{\mathcal{E}}^1\otimes \mathcal{M}(\Delta)$ also by $\ku$. The same convention applies to $H\otimes \id$, $G\otimes \id$, etc.
\end{rmk}

\begin{lemma}\label{lemma: Kuranishi map harmonic form}
If $\alpha\in \mathcal{S}_{\diff_{\mathcal{E}}}$ as in Definition \ref{defi: slice in Maurer-Cartan}, then $\ku(\alpha)\in \mathbf{H} L_{\mathcal{E}}^1$.
\end{lemma}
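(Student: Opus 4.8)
The plan is to verify directly that $\ku(\alpha)$ lies in the kernel of both $\diff_{\mathcal{E}}$ and $\diff_{\mathcal{E}}^*$, which by \eqref{eq: harmonic forms d d star} is exactly the characterization of $\mathbf{H}L_{\mathcal{E}}^1$. So there are two things to check, and the lemma will be a formal consequence of Lemma \ref{lemma: orthogonal decomposition of Maurer-Cartan equation} together with the nilpotency of the formal adjoint.

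First I would observe that the hypothesis $\alpha\in \mathcal{S}_{\diff_{\mathcal{E}}}$ in particular forces $\diff_{\mathcal{E}}\alpha+\frac{1}{2}[\alpha,\alpha]=0$, so $\alpha$ is a Maurer-Cartan element of $L_{\mathcal{E}}^1$. Hence Lemma \ref{lemma: orthogonal decomposition of Maurer-Cartan equation} applies; in particular its first conclusion \eqref{eq: orthogonal decomposition of Maurer-Cartan equation1} reads $\diff_{\mathcal{E}}(\alpha+\frac{1}{2}\diff_{\mathcal{E}}^*G[\alpha,\alpha])=0$, which by the definition \eqref{eq: Kuranishi map} of the Kuranishi map is precisely $\diff_{\mathcal{E}}\ku(\alpha)=0$.

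Next I would check $\diff_{\mathcal{E}}^*\ku(\alpha)=0$. Expanding via \eqref{eq: Kuranishi map},
\[
\diff_{\mathcal{E}}^*\ku(\alpha)=\diff_{\mathcal{E}}^*\alpha+\tfrac{1}{2}\,\diff_{\mathcal{E}}^*\diff_{\mathcal{E}}^*G[\alpha,\alpha].
\]
The first term vanishes because $\alpha\in \mathcal{S}_{\diff_{\mathcal{E}}}$ imposes $\diff_{\mathcal{E}}^*\alpha=0$; the second term vanishes because $(\diff_{\mathcal{E}}^*)^2=(\diff_{\mathcal{E}}^2)^*=0$, as $\diff_{\mathcal{E}}$ is a differential. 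Note that all operators here are applied in the appropriate degrees of $L_{\mathcal{E}}^{\bullet}$, since $[\alpha,\alpha]\in L_{\mathcal{E}}^2$ and $G$ preserves degree while $\diff_{\mathcal{E}}^*$ lowers the form degree by the relevant amount, so $\diff_{\mathcal{E}}^*G[\alpha,\alpha]\in L_{\mathcal{E}}^1$; this is automatic and requires no special care.

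Combining the two steps, $\ku(\alpha)\in \ker\diff_{\mathcal{E}}\cap \ker\diff_{\mathcal{E}}^*\cap L_{\mathcal{E}}^1=\mathbf{H}L_{\mathcal{E}}^1$ by \eqref{eq: harmonic forms d d star}, which completes the proof. I do not anticipate any genuine obstacle: the content of the statement is entirely carried by the orthogonal decomposition of the Maurer-Cartan equation established in Lemma \ref{lemma: orthogonal decomposition of Maurer-Cartan equation}, and the rest is a one-line manipulation using $\diff_{\mathcal{E}}^*\alpha=0$ and $(\diff_{\mathcal{E}}^*)^2=0$.
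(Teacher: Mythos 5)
Your proof is correct and follows essentially the same route as the paper: both deduce $\diff_{\mathcal{E}}\ku(\alpha)=0$ from \eqref{eq: orthogonal decomposition of Maurer-Cartan equation1} and then check $\diff_{\mathcal{E}}^*\ku(\alpha)=0$ using $\diff_{\mathcal{E}}^*\alpha=0$, concluding via \eqref{eq: harmonic forms d d star}. Your justification of the second step by $(\diff_{\mathcal{E}}^*)^2=(\diff_{\mathcal{E}}^2)^*=0$ is in fact slightly more direct than the paper's appeal to \eqref{eq: orthogonal decomposition of Maurer-Cartan equation2}, and is perfectly valid.
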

\begin{proof}
For any Maurer-Cartan element $\alpha$, we have  $\diff_{\mathcal{E}}(\ku(\alpha))=0$ by \eqref{eq: orthogonal decomposition of Maurer-Cartan equation1}. If we have further $\diff_{\mathcal{E}}^*\alpha=0$, then we have 
$\diff_{\mathcal{E}}^*(\ku(\alpha))=0$ by \eqref{eq: orthogonal decomposition of Maurer-Cartan equation2}. The claim now follows from \eqref{eq: harmonic forms d d star}.
\end{proof}

\begin{lemma}\label{lemma: Kuranishi map is surjective on harmonic form}
Let $\Delta\subset \mathbb{C}^m$ be a small neighborhood of $0$. The the map
\begin{equation}
\ku: L_{\mathcal{E}}^1\otimes \mathcal{M}(\Delta)\to \mathbf{H} L_{\mathcal{E}}^1\otimes \mathcal{M}(\Delta) 
\end{equation}
is surjective.
\end{lemma}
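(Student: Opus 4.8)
The plan is to solve the fixed-point equation $\ku(\alpha)=\gamma$ for a given harmonic $\gamma \in \mathbf{H}L_{\mathcal{E}}^1 \otimes \mathcal{M}(\Delta)$ by a recursive (formal power series in $t$) construction, following the classical Kuranishi argument as in \cite[Section 7.3]{kobayashi2014differential}. Writing $\ku(\alpha)=\alpha+\tfrac12 \diff_{\mathcal{E}}^* G[\alpha,\alpha]$, the equation $\ku(\alpha)=\gamma$ rearranges to the contraction-type equation
\begin{equation}
\alpha = \gamma - \tfrac12 \diff_{\mathcal{E}}^* G[\alpha,\alpha].
\end{equation}
Since every element of $L_{\mathcal{E}}^1 \otimes \mathcal{M}(\Delta)$ and of $\mathbf{H}L_{\mathcal{E}}^1 \otimes \mathcal{M}(\Delta)$ vanishes at $0 \in \Delta$, expanding in the coordinates $t=(t_1,\dots,t_m)$ we have $\gamma = \sum_{|I|\geq 1} \gamma_I t^I$ with $\gamma_I \in \mathbf{H}L_{\mathcal{E}}^1$, and we look for $\alpha=\sum_{|I|\geq 1}\alpha_I t^I$. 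Comparing coefficients, the linear terms give $\alpha_I = \gamma_I$ for $|I|=1$, and for $|I|=k\geq 2$ the coefficient $\alpha_I$ is determined by $\gamma_I$ together with the quadratic expression $-\tfrac12 \diff_{\mathcal{E}}^* G[\alpha,\alpha]$ evaluated on the already-constructed lower-order terms $\alpha_J$ with $|J|<k$. This gives a purely formal solution $\alpha$ as a power series.

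Next I would establish convergence on a sufficiently small $\Delta$ using the standard method of majorant series, exactly as in the proof of the Kuranishi theorem. The key analytic inputs are the operator bounds recorded in \eqref{eq: bounded operators} and \eqref{eq: bounded commutators}: there is a constant $c$ with $\lVert \diff_{\mathcal{E}}^* G v \rVert_{k+1} \leq c\lVert v\rVert_k$ and $\lVert [u,v]\rVert_{k+1} \leq c\lVert u\rVert_k \lVert v\rVert_k$ on a fixed Sobolev space $W^{k+1}_{L_{\mathcal{E}}^{\bullet}}$ (one fixes $k$ large enough to have a Banach algebra, or uses the quadratic estimate directly). Feeding these into the recursion, one compares $\sum \lVert \alpha_I\rVert_{k+1} t^I$ against the solution $A(t)$ of a scalar equation of the form $A = b\lvert t\rvert + c' A^2$ (where $b$ bounds the norms of the $\gamma_I$ and $\lvert t\rvert = t_1+\dots+t_m$), whose explicit solution $A(t) = \frac{1-\sqrt{1-4bc'\lvert t\rvert}}{2c'}$ is holomorphic for $\lvert t\rvert$ small. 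This forces $\alpha$ to converge in $W^{k+1}_{L_{\mathcal{E}}^{\bullet}}$ for $\Delta$ small, and ellipticity/elliptic regularity of $\square^{\mathcal{E}}_h$ (Lemma \ref{lemma: Laplacian is elliptic}) together with the argument already used at the end of the proof of Proposition \ref{prop: slice is bijective}, applied to the operator $\square^{\mathcal{E}}_h + \frac{\partial}{\partial t}\frac{\partial}{\dbar t}$ on $X\times\Delta$, upgrades $\alpha$ to a genuine $C^{\infty}$ element holomorphic in $t$, i.e. $\alpha \in L_{\mathcal{E}}^1 \otimes \mathcal{M}(\Delta)$.

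Finally I would verify that this $\alpha$ actually satisfies $\ku(\alpha)=\gamma$ as elements of $L_{\mathcal{E}}^1\otimes\mathcal{M}(\Delta)$ — this is immediate since the coefficient-by-coefficient equality was built into the recursion and both sides are convergent power series — and that $\gamma$ being harmonic was never needed for solvability, only for the target: the point is simply that the constructed $\alpha$ maps onto the prescribed $\gamma \in \mathbf{H}L_{\mathcal{E}}^1 \otimes \mathcal{M}(\Delta)$, giving surjectivity. I expect the main obstacle to be the convergence estimate: setting up the majorant recursion carefully, in particular choosing the right fixed Sobolev index so that \eqref{eq: bounded operators} and \eqref{eq: bounded commutators} combine into a clean quadratic estimate and ensuring the resulting scalar majorant equation genuinely dominates the vector recursion term by term. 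The elliptic-regularity bootstrap and the holomorphicity in $t$ are routine given the template already in the proof of Proposition \ref{prop: slice is bijective}.
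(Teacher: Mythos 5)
Your proposal is correct in outline, but it takes a genuinely different route from the paper. The paper does not run the power-series/majorant recursion: it applies the implicit function theorem for Banach spaces (as in \cite[Theorem 7.13-1]{ciarlet2013linear}) to the map $F_{\beta}(t,\alpha)=\ku(\alpha)-\beta(t)$ on $\Delta\times W^k_{L_{\mathcal{E}}^{\bullet}}$, using that $\tfrac{\partial F_{\beta}}{\partial\alpha}\big|_{(0,0)}=\id$, to produce $\alpha(t)$ with $\ku(\alpha(t))=\beta(t)$; holomorphicity in $t$ is then deduced from $\tfrac{\partial F_{\beta}}{\dbar t}\equiv 0$, and smoothness from the same elliptic bootstrap with $\square^{\mathcal{E}}_h+\tfrac{\partial}{\partial t}\tfrac{\partial}{\dbar t}$ that you invoke. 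Your recursion-plus-majorants argument is the original one of \cite{kuranishi1965new} (which the paper cites as a model but does not actually follow), and it buys you a constructive solution in which $\alpha(0)=0$ and holomorphy in $t$ are automatic from the convergent power series, at the cost of setting up the majorant estimate by hand; the IFT route is shorter and gives local uniqueness of the small solution for free. Two small points to tighten in your version: since $\gamma\in\mathbf{H}L_{\mathcal{E}}^1\otimes\mathcal{M}(\Delta)$ need not be linear in $t$, your scalar majorant should be $A=B(t)+c'A^2$ with $B$ a convergent majorant of $\gamma$ obtained from Cauchy estimates (available because $\mathbf{H}L_{\mathcal{E}}^1$ is finite dimensional by ellipticity), rather than $b\lvert t\rvert+c'A^2$; and you should fix a single Sobolev index and note that \eqref{eq: bounded operators} and \eqref{eq: bounded commutators} combine to give $\lVert \diff_{\mathcal{E}}^*G[u,v]\rVert_{k}\leq c^2\lVert u\rVert_k\lVert v\rVert_k$ on that fixed space, which is exactly the quadratic estimate the recursion needs.
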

\begin{proof}
The argument is similar to that in \cite[Section 2]{kuranishi1965new}  and \cite[Section 5]{chan2016differential}.  Let $\beta(t)\in \mathbf{H} L_{\mathcal{E}}^1\otimes \mathcal{M}(\Delta)$, i.e. $\beta(t)$ is a holomorphic family of elements in $\mathbf{H} L_{\mathcal{E}}^1$ such that $\beta(0)=0$.
We fix an integer $k\geq 0$. By \eqref{eq: bounded operators} we know that the Kuranishi map $\ku$ extends to a continuous linear map 
$$
\ku: W_{L_{\mathcal{E}}^{\bullet}}^k\to W_{L_{\mathcal{E}}^{\bullet}}^k.
$$
Consider the map $F_{\beta}: \Delta\times W_{L_{\mathcal{E}}^{\bullet}}^k\to W_{L_{\mathcal{E}}^{\bullet}}^k$ defined by
\begin{equation}
F_{\beta}(t,\alpha)=\ku(\alpha)-\beta(t).
\end{equation}
Then we have $F_{\beta}(0,\alpha)=\ku(\alpha)$ and in particular $F_{\beta}(0,0)=0$.
We can also check that  
\begin{equation}
\frac{\partial F_{\beta}}{\partial \alpha}\big|_{t=0,\alpha=0}=\id_{W_{L_{\mathcal{E}}^{\bullet}}^k}.
\end{equation} 
Therefore by the implicit function theorem of Banach spaces and the same argument as in the proof of Proposition \ref{prop: slice is bijective}, we obtain a holomorphic family $\alpha(t)$ valued in $W_{L_{\mathcal{E}}^{\bullet}}^k$ such that $\alpha(0)=0$ and  $F_{\beta}(t,\alpha(t))=0$  for any $t\in \Delta$, i.e. we have
\begin{equation}
\ku(\alpha(t))=\beta(t) \text{ for any } t\in \Delta.
\end{equation}

It remains to show that $\alpha(t)$ is $C^{\infty}$. We apply $\square^{\mathcal{E}}_h$ to $\ku (\alpha(t))$ and get
\begin{equation}\label{eq: d star alpha 1}
\square^{\mathcal{E}}_h\alpha(t)+\frac{1}{2}\square^{\mathcal{E}}_h\diff_{\mathcal{E}}^*G[\alpha(t),\alpha(t)]=\square^{\mathcal{E}}_h(\beta(t))= 0.
\end{equation}
Since $\id_{W_{L_{\mathcal{E}}^{\bullet}}^0}=H+\square^{\mathcal{E}}_h G$ and $\diff_{\mathcal{E}}^*G=G\diff_{\mathcal{E}}^*$, we know that
\begin{equation}\label{eq: d star alpha 2}
\begin{split}
\square^{\mathcal{E}}_h\diff_{\mathcal{E}}^*G[\alpha(t),\alpha(t)]&=\diff_{\mathcal{E}}^*[\alpha(t),\alpha(t)]-H\diff_{\mathcal{E}}^*[\alpha(t),\alpha(t)]\\
&=\diff_{\mathcal{E}}^*[\alpha(t),\alpha(t)]
\end{split}
\end{equation}
as $H|_{\text{Im}\diff_{\mathcal{E}}^*}=0$. Combine \eqref{eq: d star alpha 1} and \eqref{eq: d star alpha 2} we get
\begin{equation}
\square^{\mathcal{E}}_h\alpha(t)+\frac{1}{2}\diff_{\mathcal{E}}^*[\alpha(t),\alpha(t)]=0.
\end{equation}
Moreover, as $\alpha(t)$ is holomorphic with respect to $t$, we have
\begin{equation}
\square^{\mathcal{E}}_h\alpha(t)+\frac{\partial }{\partial t}\frac{\partial }{\dbar t} \alpha(t)+\frac{1}{2}\diff_{\mathcal{E}}^*[\alpha(t),\alpha(t)]=0.
\end{equation}
We notice that the highest order part is $\square^{\mathcal{E}}_h+\frac{\partial }{\partial t}\frac{\partial }{\dbar t}$, whose ellipticity implies that 
\begin{equation}
\alpha\in\bigoplus_{i+j=1} C^{\infty}(X\times \Delta, \wedge^{i}\overline{T^{*}X}\hat{\otimes} \End^j(E^{\bullet})).
\end{equation}
\end{proof}

\begin{rmk}
In general, the solution $\alpha(t)$ in Lemma \ref{lemma: Kuranishi map is surjective on harmonic form} may not be in $\mathcal{S}_{\diff_{\mathcal{E}}}$.
\end{rmk}

\subsection{The obstruction}\label{subsection: obstruction}

\begin{prop}\label{prop: obstruction}
Let $\Delta\subset \mathbb{C}^m$ be a small neighborhood of $0$. Let $\beta(t)$ be  in $\mathbf{H} L_{\mathcal{E}}^1\otimes \mathcal{M}(\Delta)$. Let $\alpha(t)\in L_{\mathcal{E}}^1\otimes \mathcal{M}(\Delta)$ be a  solution of $\ku(\alpha(t))=\beta(t)$ as in Lemma \ref{lemma: Kuranishi map is surjective on harmonic form}. For  $\Delta$ sufficiently small, the following are equivalent.
\begin{enumerate}
\item $\alpha(t)\in \mathcal{S}_{\diff_{\mathcal{E}}}\otimes \mathcal{M}(\Delta)$;
\item $\alpha(t)$ is an Maurer-Cartan element for any $t\in \Delta$;
\item $H[\alpha(t),\alpha(t)]=0$ for any $t\in \Delta$.
\end{enumerate}
\end{prop}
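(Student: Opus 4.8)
The plan is to establish the cycle of implications $(1)\Rightarrow(2)\Rightarrow(3)\Rightarrow(1)$, of which only the last is substantial. The implication $(1)\Rightarrow(2)$ is immediate from Definition \ref{defi: slice in Maurer-Cartan}, since every element of $\mathcal{S}_{\diff_{\mathcal{E}}}$ is by construction a Maurer-Cartan element, so $\alpha(t)\in\mathcal{S}_{\diff_{\mathcal{E}}}\otimes\mathcal{M}(\Delta)$ forces $\alpha(t)$ to be Maurer-Cartan for each $t$. The implication $(2)\Rightarrow(3)$ is immediate from Lemma \ref{lemma: orthogonal decomposition of Maurer-Cartan equation}, since a Maurer-Cartan element in particular satisfies \eqref{eq: orthogonal decomposition of Maurer-Cartan equation3}.

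For $(3)\Rightarrow(1)$, I would first extract the information that comes for free from the hypothesis $\ku(\alpha(t))=\beta(t)$ with $\beta(t)\in\mathbf{H}L_{\mathcal{E}}^1\otimes\mathcal{M}(\Delta)$, so that $\diff_{\mathcal{E}}\beta(t)=\diff_{\mathcal{E}}^*\beta(t)=0$ by \eqref{eq: harmonic forms d d star}. Applying $\diff_{\mathcal{E}}^*$ to $\alpha(t)+\tfrac12\diff_{\mathcal{E}}^*G[\alpha(t),\alpha(t)]=\beta(t)$ and using $(\diff_{\mathcal{E}}^*)^2=0$ gives $\diff_{\mathcal{E}}^*\alpha(t)=0$, so the coclosedness clause in the definition of $\mathcal{S}_{\diff_{\mathcal{E}}}$ holds automatically. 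Applying $\diff_{\mathcal{E}}$ instead gives $\diff_{\mathcal{E}}(\ku(\alpha(t)))=0$, i.e. \eqref{eq: orthogonal decomposition of Maurer-Cartan equation1}. Feeding \eqref{eq: orthogonal decomposition of Maurer-Cartan equation1} and the hypothesis $H[\alpha(t),\alpha(t)]=0$ into the identity \eqref{eq: Maurer-Cartan variation4} shows that the Maurer-Cartan obstruction $\Psi(t):=\diff_{\mathcal{E}}\alpha(t)+\tfrac12[\alpha(t),\alpha(t)]$ equals $\tfrac12\diff_{\mathcal{E}}^*\diff_{\mathcal{E}}G[\alpha(t),\alpha(t)]$, which lies in $\text{Im }\diff_{\mathcal{E}}^*$. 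Hence it suffices to prove $\Psi(t)=0$, and then $\alpha(t)$ is a coclosed Maurer-Cartan element, i.e. $\alpha(t)\in\mathcal{S}_{\diff_{\mathcal{E}}}\otimes\mathcal{M}(\Delta)$.

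To show $\Psi(t)=0$ I would run the usual Kuranishi a priori estimate. Since $\Psi(t)\in\text{Im }\diff_{\mathcal{E}}^*$ we have $\diff_{\mathcal{E}}^*\Psi(t)=0$ and $H\Psi(t)=0$, so \eqref{eq: Hodge projection and Green operator} and \eqref{eq: Green operator commutes} give $\Psi(t)=\square^{\mathcal{E}}_hG\Psi(t)=\diff_{\mathcal{E}}^*G\diff_{\mathcal{E}}\Psi(t)$. Combining this with the Bianchi-type identity $\diff_{\mathcal{E}}\Psi(t)=[\Psi(t),\alpha(t)]$, which follows from $\diff_{\mathcal{E}}^2=0$, the graded Leibniz rule and the graded Jacobi identity $[[\alpha(t),\alpha(t)],\alpha(t)]=0$, yields the fixed-point relation $\Psi(t)=\diff_{\mathcal{E}}^*G[\Psi(t),\alpha(t)]$. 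Fixing a Sobolev order $k$ and applying \eqref{eq: bounded operators} and \eqref{eq: bounded commutators} then gives an estimate of the shape $\lVert\Psi(t)\rVert_k\le c\,\lVert\alpha(t)\rVert_k\,\lVert\Psi(t)\rVert_k$; since $\alpha(0)=0$ and $t\mapsto\alpha(t)$ is continuous — indeed $C^\infty$ by Lemma \ref{lemma: Kuranishi map is surjective on harmonic form} — shrinking $\Delta$ makes $c\,\lVert\alpha(t)\rVert_k<1$ for all $t\in\Delta$, which forces $\Psi(t)=0$, as desired.

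I expect the main obstacle to be this last step: one must check that the elliptic estimates of Section \ref{subsection: Sobolev spaces} can be applied at a single Sobolev level with constants uniform over $t\in\Delta$, and that the relevant norm of $\alpha(t)$ can genuinely be made as small as needed on a small polydisk. Here it helps that the bracket of $L_{\mathcal{E}}$ is a pointwise, $C^\infty(X)$-bilinear (hence zeroth-order) operation, so $[\Psi(t),\alpha(t)]$ costs no derivatives beyond what \eqref{eq: bounded commutators} already records. Everything else — the two applications of $\diff_{\mathcal{E}}$ and $\diff_{\mathcal{E}}^*$, the Hodge-theoretic bookkeeping via the decomposition \eqref{eq: decomposition of L E}, and the Bianchi identity — is purely formal.
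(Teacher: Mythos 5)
Your proposal is correct and follows essentially the same route as the paper: the same reduction of $(3)\Rightarrow(1)$ to showing $\diff_{\mathcal{E}}^*\alpha(t)=0$ and $\gamma(t):=\diff_{\mathcal{E}}\alpha(t)+\tfrac12[\alpha(t),\alpha(t)]=0$, the same use of \eqref{eq: Maurer-Cartan variation4} with $H[\alpha,\alpha]=0$ and $\diff_{\mathcal{E}}\ku(\alpha)=0$ to land in $\mathrm{Im}\,\diff_{\mathcal{E}}^*$, the same fixed-point identity $\gamma(t)=\diff_{\mathcal{E}}^*G[\gamma(t),\alpha(t)]$ (you derive it via the Bianchi identity $\diff_{\mathcal{E}}\gamma=[\gamma,\alpha]$ rather than by pushing $\diff_{\mathcal{E}}$ through $G$ directly, but the two computations are interchangeable), and the same Sobolev contraction estimate on a shrunken $\Delta$. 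No gaps.
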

\begin{proof}
$1\Rightarrow 2$ is trivial. By \eqref{eq: orthogonal decomposition of Maurer-Cartan equation3} in Lemma \ref{lemma: orthogonal decomposition of Maurer-Cartan equation} we know $2\Rightarrow 3$. We need to prove $3\Rightarrow 1$. 

Since $\ku(\alpha(t))=\alpha(t)+\frac{1}{2}\diff_{\mathcal{E}}^*G[\alpha(t),\alpha(t)]\in \mathbf{H} L_{\mathcal{E}}^1\otimes \mathcal{M}(\Delta)$ , we get
\begin{equation}\label{eq: harmonic of ku alpha}
\diff_{\mathcal{E}}\Big(\alpha(t)+\frac{1}{2}\diff_{\mathcal{E}}^*G[\alpha(t),\alpha(t)]\Big)=0 \text{ and } \diff_{\mathcal{E}}^*\Big(\alpha(t)+\frac{1}{2}\diff_{\mathcal{E}}^*G[\alpha(t),\alpha(t)]\Big)=0.
\end{equation}
From the second equality we immediately get $\diff_{\mathcal{E}}^*\alpha(t)=0$. It remains  to prove $\alpha(t)$ is a Maurer-Cartan element. Let 
\begin{equation}
\gamma(t)=\diff_{\mathcal{E}}\alpha(t)+\frac{1}{2}[\alpha(t),\alpha(t)].
\end{equation}
We want to show that $\gamma(t)=0$ for $t\in \Delta$.

Since we have  $H[\alpha(t),\alpha(t)]=0$, by \eqref{eq: Maurer-Cartan variation4},  \eqref{eq: harmonic of ku alpha}, and \eqref{eq: Green operator commutes} we have
\begin{equation}
\begin{split}
\gamma(t)&=\frac{1}{2}\diff_{\mathcal{E}}^*\diff_{\mathcal{E}}G[\alpha(t),\alpha(t)]\\
&=\frac{1}{2}\diff_{\mathcal{E}}^*G\diff_{\mathcal{E}}[\alpha(t),\alpha(t)]\\
&=\diff_{\mathcal{E}}^*G[\diff_{\mathcal{E}}\alpha(t),\alpha(t)]\\
&=\diff_{\mathcal{E}}^*G[\gamma(t)-\frac{1}{2}[\alpha(t),\alpha(t)],\alpha(t)].
\end{split}
\end{equation}
Since $[[\alpha(t),\alpha(t)],\alpha(t)]=0$, we get
\begin{equation}
\gamma(t)=\diff_{\mathcal{E}}^*G[\gamma(t),\alpha(t)].
\end{equation}

We fix an integer $k\geq 0$. By the norm estimations \eqref{eq: bounded operators} and \eqref{eq: bounded commutators} we get
\begin{equation}\label{eq: norm of gamma}
\lVert \gamma(t)\rVert_{k}\leq \lVert \gamma(t)\rVert_{k+1}\leq c^2 \lVert \alpha(t)\rVert_{k}\lVert \gamma(t)\rVert_{k}
\end{equation}

We know $\alpha(0)=0$. Now we make $\Delta$ sufficiently small so that $\lVert \alpha(t)\rVert_{k}\leq \frac{1}{2c^2}$ for $t\in \Delta$. Hence \eqref{eq: norm of gamma} implies $\lVert \gamma(t)\rVert_{k}\leq \frac{1}{2}\lVert \gamma(t)\rVert_{k}$. So $\gamma(t)=0$ for $t\in \Delta$. We finish the proof.
\end{proof}

\begin{defi}\label{defi: obstruction}
We call $H[\alpha(t),\alpha(t)]\in \mathbf{H} L_{\mathcal{E}}^2\otimes \mathcal{M}(\Delta)$ the obstruction of deformation of $\beta(t)\in \mathbf{H} L_{\mathcal{E}}^1\otimes \mathcal{M}(\Delta)$.
\end{defi}

\subsection{Unobstructed deformations}\label{subsection: unobstructed}
In general it is difficult to check whether the obstruction vanishes. Nevertheless, there are cases that the obstruction vanishes identically.

Recall (see \cite[Section 4.2]{bismut2021coherent}) that we have a supertrace map 
$$
\str: \End^{\bullet}(E^{\bullet}) \to \mathbb{C}
$$
 define by
\begin{equation}
\str(u)=\begin{cases}
\sum_i (-1)^i\text{Tr}( u|_{E^i}), & \text{ if } u \in \End^0(E^{\bullet});\\
0, & \text{ if } u\in \End^j(E^{\bullet}), ~j\neq 0.
\end{cases}
\end{equation}
We can extend it to a supertrace on $L_{\mathcal{E}}^{\bullet}$,
\begin{equation}
\str: L_{\mathcal{E}}^{\bullet}\to \Omega^{0,\bullet}(X)
\end{equation}
 as follows, for a $\phi\in L_{\mathcal{E}}^k=\bigoplus _i C^{\infty}(X,\wedge^i\overline{T^{*}X}\hat\otimes \End^{k-i}(E^{\bullet}))$, we decompose $\phi$ as
$$
\phi=\phi_0+\phi_1+\ldots
$$
where $\phi_i\in C^{\infty}(X,\wedge^i\overline{T^{*}X}\hat\otimes \End^{k-i}(E^{\bullet}))$ for each $i$. In particular
\begin{equation}
\phi_k\in C^{\infty}(X,\wedge^k\overline{T^{*}X}\hat\otimes \End^0(E^{\bullet})).
\end{equation}
Write $\phi_k=\sum_a s_a u_a$ where $s_a\in \Omega^{0,k}(X)$ and $u_a\in C^{\infty}(X, \End^0(E^{\bullet}))$. Then we define
\begin{equation}
\str \phi :=\sum_a s_a \cdot \str u_a\in  \Omega^{0,k}(X).
\end{equation}
It is clear that $\str$ vanishes on supercommutators.

We can define the $\dbar$-Laplacian on $\Omega^{0,\bullet}(X)$ and hence harmonic forms $\mathbf{H}^{0,\bullet}(X)$. We can also define the Sobolev spaces $W^k_{\Omega^{0,\bullet}(X)}$ and the orthogonal projection $H: W^0_{\Omega^{0,\bullet}(X)}\to \mathbf{H}^{0,\bullet}(X)$ as in Section \ref{subsection: Sobolev spaces}. We can extend $\str$ to a map
\begin{equation}
\str: W^k_{L_{\mathcal{E}}^{\bullet}} \to W^k_{\Omega^{0,\bullet}(X)}.
\end{equation}

\begin{lemma}\label{lemma: supertrace commutes with H}
The supertrace map $\str$ restricts to a map $
\str: \mathbf{H}L_{\mathcal{E}}^{\bullet}\to \mathbf{H}^{0,\bullet}(X)$.
Moreover, the $\str$ commutes with the projection $H$, i.e.
\begin{equation}
\str\circ H=H\circ \str: W^k_{L_{\mathcal{E}}^{\bullet}} \to \mathbf{H}^{0,\bullet}(X).
\end{equation}
\end{lemma}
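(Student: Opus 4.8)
The plan is to reduce the statement to two facts: that $\str$ intertwines the Laplacian $\square^{\mathcal{E}}_h$ on $L_{\mathcal{E}}^{\bullet}$ with the $\dbar$-Laplacian on $\Omega^{0,\bullet}(X)$, and that $\str$ intertwines the Green operators. First I would show that $\str$ intertwines the differentials and their adjoints. Since $\str$ vanishes on supercommutators and is $C^{\infty}(X)$-linear in the way described, a direct computation from the formula \eqref{eq: diff E in LE shorter} gives $\str(\diff_{\mathcal{E}}\phi) = \dbar(\str\phi)$: the bracket terms $[v_i,\phi_{l-i}]$ die under $\str$, and only the top $\End^0$-component contributes, on which $\nabla^{E^{\bullet}\prime\prime}$ acts as $\dbar$ after taking the trace (the connection terms cancel because the trace is connection-independent on $\End^0$). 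For the adjoint, observe that the metric $g^{\wedge\overline{T^{*}X}}\otimes h$ is compatible with $\str$ in the sense that the fiberwise adjoint of $\str$ with respect to $h$ on $\End^0(E^{\bullet})$ is the natural inclusion $\Omega^{0,\bullet}(X)\hookrightarrow L_{\mathcal{E}}^{\bullet}$ sending $s\mapsto s\cdot\id_{E^{\bullet}}$ up to the ranks; more precisely one checks $\str\circ \diff_{\mathcal{E}}^* = \dbar^*\circ\str$ by taking adjoints of $\str(\diff_{\mathcal{E}}\phi)=\dbar(\str\phi)$ together with this compatibility. Care is needed here because $\str$ is not an isometry, but the two relevant adjointness identities still force $\str\diff_{\mathcal{E}}^* = \dbar^*\str$ on the appropriate components.

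Granting these two intertwining relations, we get immediately
\begin{equation}
\str\circ \square^{\mathcal{E}}_h = (\dbar\dbar^* + \dbar^*\dbar)\circ \str = \square_{\dbar}\circ \str,
\end{equation}
so $\str$ maps $\ker\square^{\mathcal{E}}_h$ into $\ker\square_{\dbar}$, i.e. $\str(\mathbf{H}L_{\mathcal{E}}^{\bullet})\subseteq \mathbf{H}^{0,\bullet}(X)$, which is the first assertion. For the second assertion, I would use the characterization \eqref{eq: Hodge projection and Green operator}: $\id = H + \square^{\mathcal{E}}_h G$ on $W^0_{L_{\mathcal{E}}^{\bullet}}$ and likewise $\id = H + \square_{\dbar}G_{\dbar}$ on $W^0_{\Omega^{0,\bullet}(X)}$. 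Applying $\str$ to $\id = H + \square^{\mathcal{E}}_h G$ and using the intertwining of Laplacians gives
\begin{equation}
\str = \str\circ H + \square_{\dbar}\circ(\str\circ G).
\end{equation}
Since $\str\circ H$ lands in harmonic forms (by the first assertion, as $\mathrm{Im}\,H \subseteq \mathbf{H}L_{\mathcal{E}}^{\bullet}$), uniqueness of the Hodge decomposition for $\square_{\dbar}$ forces $\str\circ H = H\circ\str$ (and incidentally $\str\circ G = G_{\dbar}\circ\str$ on $(\mathbf{H})^\perp$). This gives the desired identity on $W^k_{L_{\mathcal{E}}^{\bullet}}$.

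The main obstacle I anticipate is the adjointness step $\str\circ\diff_{\mathcal{E}}^* = \dbar^*\circ\str$, because $\str$ does not preserve the inner products — one must track exactly how the metric on $\End^0(E^{\bullet})$ pairs the image of $\str^*$ (the "inclusion of scalars") against a general endomorphism-valued form, and verify that the contributions of $v_i^*$ still vanish and that $\nabla^{E^{\bullet}\prime\prime,*}$ reduces to $\dbar^*$ after tracing. A clean way around this is to avoid computing $\diff_{\mathcal{E}}^*$ directly: instead prove $\str\diff_{\mathcal{E}}^* = \dbar^*\str$ by showing, for all test forms $s\in\Omega^{0,\bullet}(X)$ and $\phi\in L_{\mathcal{E}}^{\bullet}$, that $(\str\diff_{\mathcal{E}}^*\phi, s)_{\dbar} = (\dbar^*\str\phi, s)_{\dbar}$, using the already-established $\str\diff_{\mathcal{E}} = \dbar\str$ and the self-adjointness relations defining $\diff_{\mathcal{E}}^*$ and $\dbar^*$ — provided one first checks the single compatibility $(\str\phi, s)_{\dbar} = (\phi, \iota(s))_h$ where $\iota$ is a fixed adjoint of $\str$, which is an elementary linear-algebra fact about the supertrace and the metric $h$ on each $\End^0(E^i)$.
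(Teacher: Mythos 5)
Your overall strategy is the one the paper uses (its own proof is a one-liner): show that $\str$ intertwines $\diff_{\mathcal{E}}$ with $\dbar$ and $\diff_{\mathcal{E}}^*$ with $\dbar^*$, hence the Laplacians, and then get $\str\circ H=H\circ\str$ from $\id=H+\square^{\mathcal{E}}_h G$ together with uniqueness of the harmonic part of a Hodge decomposition. The first intertwining ($\str\diff_{\mathcal{E}}=\dbar\,\str$, because the $[v_i,\cdot]$ terms are supercommutators and the trace of the induced connection is $\dbar$ of the trace) and the final Hodge-theoretic step are fine as you state them, and they match what the paper's proof alludes to.

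The gap is in your proposed ``clean way around'' for the adjoint identity $\str\circ\diff_{\mathcal{E}}^*=\dbar^*\circ\str$. First, the fiberwise adjoint of $\str$ is not the inclusion of scalars ``up to ranks'': with the induced metric it is $\iota(s)=s\cdot\sigma$ with $\sigma=\sum_i(-1)^i\id_{E^i}$, the grading operator. Second, and more seriously, the duality chain $(\str\diff_{\mathcal{E}}^*\phi,s)=(\diff_{\mathcal{E}}^*\phi,\iota s)_h=(\phi,\diff_{\mathcal{E}}(\iota s))_h$ can be matched with $(\dbar^*\str\phi,s)=(\phi,\iota(\dbar s))_h$ only if $\diff_{\mathcal{E}}(\iota s)=\iota(\dbar s)$, i.e.\ only if $\iota$ intertwines the differentials; note that taking adjoints of the established identity $\str\diff_{\mathcal{E}}=\dbar\,\str$ gives $\diff_{\mathcal{E}}^*\circ\iota=\iota\circ\dbar^*$, which is a different statement and does not produce what you need. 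And $\iota$ does \emph{not} intertwine the differentials: $\diff_{\mathcal{E}}(s\sigma)-(\dbar s)\sigma=\sum_{i\neq 1}\pm\, s\,[v_i,\sigma]$, and $[v_i,\sigma]=2v_i\sigma$ whenever $v_i$ has odd $\End$-degree (i.e.\ $i$ even, in particular $i=0$), so these error terms are nonzero for any cohesive module with $v_0\neq 0$ --- precisely the cases beyond holomorphic vector bundles. So the shortcut does not circumvent the delicate point; it runs straight into it. The workable route is the componentwise one you mention first, which is also the paper's: in the explicit formula \eqref{eq: diff E star in LE shorter} every term $[v_i^*,\phi_{l+i}]$ has endomorphism part a graded commutator $[M^*,N]$, which $\str$ annihilates, and $\str\,\nabla^{E^{\bullet}\prime\prime,*}=\dbar^*\,\str$ on $\End^0$-valued forms because $\nabla^{E^{\bullet}\prime\prime}$ preserves the grading (so the grading operator is parallel). (A minor point: your parenthetical $\str\circ G=G_{\dbar}\circ\str$ is not needed and holds only up to a harmonic error term.)
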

\begin{proof}
It follows easily from the definition of $\square^{\mathcal{E}}_h$ in Section \ref{subsection: metrics and Laplacians} and the fact that $\str$ vanishes on supercommutators.
\end{proof}

Since $\mathbf{H}L_{\mathcal{E}}^{\bullet}\cong \Hom_{\underline{B}(X)}(\mathcal{E},\mathcal{E}[\bullet])$ and $\mathbf{H}^{0,\bullet}(X)\cong H^{0,\bullet}(X)$, we obtain a supertrace map
\begin{equation}
\str: \Hom_{\underline{B}(X)}(\mathcal{E},\mathcal{E}[\bullet])\to H^{0,\bullet}(X).
\end{equation}

\begin{lemma}\label{lemma: obstruction is traceless}
For each $t$, the obstruction $H[\alpha(t),\alpha(t)]$ as in Definition \ref{defi: obstruction} lies in the kernel of $
\str: \mathbf{H}L_{\mathcal{E}}^2\to \mathbf{H}^{0,2}(X)$.
\end{lemma}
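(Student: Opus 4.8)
The plan is to reduce the statement to the single fact, recorded just above the lemma, that $\str$ annihilates supercommutators, combined with the commutation property of Lemma \ref{lemma: supertrace commutes with H}. No analytic input about the particular solution $\alpha(t)$ is needed; the argument works for any element of $L_{\mathcal{E}}^1$.

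First I would observe that $[\alpha(t),\alpha(t)]$ is itself a supercommutator in $L_{\mathcal{E}}^{\bullet}$: it is the value of the bracket \eqref{eq: supercommutator} at $\beta=\gamma=\alpha(t)$, and since $\alpha(t)\in L_{\mathcal{E}}^1$ has odd total degree we in fact have $[\alpha(t),\alpha(t)]=2\,\alpha(t)\alpha(t)$. Because $\str$ vanishes on supercommutators, this gives $\str[\alpha(t),\alpha(t)]=0$ in $\Omega^{0,2}(X)$, and the same holds after extending $\str$ by continuity to $W^0_{L_{\mathcal{E}}^{\bullet}}$, since $[\alpha(t),\alpha(t)]$ already lies in $L_{\mathcal{E}}^2$ (as $\alpha(t)\in L_{\mathcal{E}}^1$) where $\str$ is given by the smooth formula. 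Next I would apply Lemma \ref{lemma: supertrace commutes with H}, which yields $\str\circ H=H\circ\str$, so that
\begin{equation}
\str\bigl(H[\alpha(t),\alpha(t)]\bigr)=H\bigl(\str[\alpha(t),\alpha(t)]\bigr)=H(0)=0.
\end{equation}
Since $H[\alpha(t),\alpha(t)]$ lies in $\mathbf{H}L_{\mathcal{E}}^2$ by construction of the Hodge projection, this is exactly the assertion that the obstruction lies in the kernel of $\str:\mathbf{H}L_{\mathcal{E}}^2\to\mathbf{H}^{0,2}(X)$.

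There is essentially no hard step here. The only points requiring a word of care are that $[\alpha(t),\alpha(t)]$ genuinely qualifies as a supercommutator so the vanishing property applies, and that passing to Sobolev completions disturbs neither this vanishing nor the identity $\str\circ H=H\circ\str$; both are immediate because the relevant identities already hold on the dense subspace $L_{\mathcal{E}}^{\bullet}$ of smooth sections and every operator involved is continuous. In fact, since $\alpha(t)$ is $C^\infty$ by Lemma \ref{lemma: Kuranishi map is surjective on harmonic form}, one may run the whole argument with smooth forms only and dispense with Sobolev spaces altogether.
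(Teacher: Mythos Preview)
Your proof is correct and follows exactly the same approach as the paper's: use that $\str$ vanishes on supercommutators to get $\str[\alpha(t),\alpha(t)]=0$, then apply Lemma \ref{lemma: supertrace commutes with H} to conclude. You have simply spelled out in more detail (including the unnecessary but harmless discussion of Sobolev completions) what the paper dispatches in two lines.
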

\begin{proof}
The supertrace vanishes on commutators so $\str[\alpha(t),\alpha(t)]=0$. The claim then follows from Lemma \ref{lemma: supertrace commutes with H}.
\end{proof}

\begin{prop}
If the supertrace map $\str: \Hom_{\underline{B}(X)}(\mathcal{E},\mathcal{E}[2])\to H^{0,2}(X)$ is injective, then deformations of $\mathcal{E}$ are unobstructed. More precisely, when the hypothesis is satisfied, for any linear map $\beta: T_0\Delta\to \Hom_{\underline{B}(X)}(\mathcal{E},\mathcal{E}[1])$, there exists a deformation $\mathfrak{F}$ on a small neighborhood of $0$ in $\Delta$, such that $\beta(v)=\KS(v,\mathfrak{F})$, where $\KS$ is the Kodaira-Spencer map in Proposition \ref{prop: deformation gives infinitesimal deformation}.
\end{prop}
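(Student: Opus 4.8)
The plan is to run the Kuranishi construction of Sections~\ref{subsection: the slice space}--\ref{subsection: obstruction} on the prescribed infinitesimal data, and to use the hypothesis on $\str$ to force the obstruction to vanish identically.

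First I would interpret the data. Via the isomorphism $\mathbf{H}L_{\mathcal{E}}^1\cong \Hom_{\underline{B}(X)}(\mathcal{E},\mathcal{E}[1])$ coming from \eqref{eq: decomposition of L E}, the linear map $\beta$ becomes a linear map $T_0\Delta\to \mathbf{H}L_{\mathcal{E}}^1$, and hence, choosing linear coordinates on $\Delta\subset\mathbb{C}^m$, a holomorphic family $\beta(t)\in \mathbf{H}L_{\mathcal{E}}^1\otimes\mathcal{M}(\Delta)$ depending linearly on $t$. By Lemma~\ref{lemma: Kuranishi map is surjective on harmonic form}, after shrinking $\Delta$ there is a holomorphic family $\alpha(t)\in L_{\mathcal{E}}^1\otimes\mathcal{M}(\Delta)$, which by the elliptic regularity in that proof is a $C^{\infty}$ section of $\bigoplus_{i+j=1}\wedge^i\overline{T^{*}X}\hat{\otimes}\End^j(E^{\bullet})$ over $X\times\Delta$, with $\ku(\alpha(t))=\beta(t)$.

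Next I would kill the obstruction. By Lemma~\ref{lemma: obstruction is traceless}, $\str\big(H[\alpha(t),\alpha(t)]\big)=0$ for every $t$. Under the identifications $\mathbf{H}L_{\mathcal{E}}^2\cong\Hom_{\underline{B}(X)}(\mathcal{E},\mathcal{E}[2])$ and $\mathbf{H}^{0,2}(X)\cong H^{0,2}(X)$, the hypothesis says exactly that $\str\colon \mathbf{H}L_{\mathcal{E}}^2\to\mathbf{H}^{0,2}(X)$ is injective, whence $H[\alpha(t),\alpha(t)]=0$ for all $t$. By Proposition~\ref{prop: obstruction} (the implications $3\Rightarrow 1$ and $3\Rightarrow 2$), for $\Delta$ small enough $\alpha(t)$ is a Maurer--Cartan element of $L_{\mathcal{E}}\otimes\mathcal{M}(\Delta)$, and indeed lies in $\mathcal{S}_{\diff_{\mathcal{E}}}\otimes\mathcal{M}(\Delta)$. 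By Theorem~\ref{thm: deformation and Maurer-Cartan} its gauge class corresponds to a deformation of $\mathcal{E}$ over $\Delta$; concretely one takes the strong deformation $\mathfrak{F}=(E^{\bullet},A^{E^{\bullet}\prime\prime}+\alpha(t)+\dbar_{\Delta})$ with $\phi=\id_{E^{\bullet}}$.

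It remains to check $\KS(v,\mathfrak{F})=\beta(v)$, and the point is that this is visible from the linear term of $\alpha(t)$. Since the second term in \eqref{eq: Kuranishi map} is quadratic in $\alpha$, the differential of $\ku$ at $0$ is the identity; differentiating $\ku(\alpha(t))=\beta(t)$ at $t=0$ and using $\alpha(0)=0$ gives $\partial_v\alpha|_0=\partial_v\beta|_0=\beta(v)$ for every $v\in T_0\Delta$. For a holomorphic curve $f\colon U\to\Delta$ with $f(0)=0$, $f'(0)=v$ as in Section~\ref{section: infinitesimal deformations}, the pull-back strong deformation is $(E^{\bullet},A^{E^{\bullet}\prime\prime}+\alpha(f(s))+\dbar_U)$, and the coefficient $\epsilon_1$ of $s$ in $\alpha(f(s))$ equals $\partial_v\alpha|_0=\beta(v)$, which is already harmonic and so represents its own class under $\mathbf{H}L_{\mathcal{E}}^1\cong\Hom_{\underline{B}(X)}(\mathcal{E},\mathcal{E}[1])$. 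Hence $\KS(v,\mathfrak{F})=[\epsilon_1]=\beta(v)$, as desired. The only step I expect to require real care is the passage from ``$\str$ annihilates the obstruction'' to genuine integrability via Proposition~\ref{prop: obstruction}, together with the bookkeeping of the successive shrinkings of $\Delta$ (from Lemma~\ref{lemma: Kuranishi map is surjective on harmonic form} and from Proposition~\ref{prop: obstruction}); the remaining steps are routine given the machinery already in place.
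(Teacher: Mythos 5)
Your proposal is correct and follows essentially the same route as the paper: solve $\ku(\alpha(t))=\beta(t)$ via Lemma \ref{lemma: Kuranishi map is surjective on harmonic form}, kill the obstruction using Lemma \ref{lemma: obstruction is traceless} together with the injectivity of $\str$, integrate via Proposition \ref{prop: obstruction}, and read off $\KS(v,\mathfrak{F})=\beta(v)$ from the linear term of $\alpha(t)$. Your explicit justification of that last step (differentiating the Kuranishi equation at $t=0$ and noting the quadratic term contributes nothing to first order) is a slightly more careful version of what the paper states as clear.
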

\begin{proof}
We can treat the linear map $\beta: T_0\Delta\to \Hom_{\underline{B}(X)}(\mathcal{E},\mathcal{E}[1])$ as a holomorphic map 
$$
\beta(t): \Delta\to \mathbf{H}L_{\mathcal{E}}^1
$$
such that $\beta(0)=0$. Let $\alpha(t)$ be a solution to $\ku(\alpha(t))=\beta(t)$ as in Lemma \ref{lemma: Kuranishi map is surjective on harmonic form}. 

If $\str: \Hom_{\underline{B}(X)}(\mathcal{E},\mathcal{E}[2])\to H^{0,2}(X)$ is injective, then $H[\alpha(t),\alpha(t)]=0$ by Lemma \ref{lemma: obstruction is traceless}. Then by Proposition \ref{prop: obstruction}, on a small neighborhood of $0$ in $\Delta$, $\alpha(t)$ give a strong deformation of $\mathcal{E}$, which we denote by $\mathfrak{F}$.

Now for each $v\in T_0\Delta$, let $f$ be a $1$-parameter curve such that $\frac{\partial f}{\partial s}|_{s=0}=v$ as in Section \ref{section: infinitesimal deformations}. Then we have 
\begin{equation}
\alpha(f(s))+\frac{1}{2}\diff_{\mathcal{E}}^*G[\alpha(f(s)),\alpha(f(s))]=\beta(f(s)).
\end{equation}
Therefore it is clear that the first order term in the Taylor expansion of $\alpha(f(s))$ is $\beta(v)$, i.e. we have $\beta(v)=\KS(v,\mathfrak{F})$.
\end{proof}

We have examples of $\mathcal{E}$ such that $\str: \Hom_{\underline{B}(X)}(\mathcal{E},\mathcal{E}[2])\to H^{0,2}(X)$ is injective.

\begin{eg}\label{eg: unobstructed 1}
 Let   $\mathcal{S}\in D^b_{\coh}(X)$ be an object such that $\Hom_{D^b_{\coh}(X)}(\mathcal{S},\mathcal{S}[2])=0$.  Let  $\mathcal{E}\in B(X)$ be such that $\underline{F}_X(\mathcal{E})\simeq \mathcal{S}$ as in Theorem \ref{thm: equiv of cats}. Then $\str$ is injective as  $\Hom_{\underline{B}(X)}(\mathcal{E},\mathcal{E}[2])=0$, hence the deformation of $\mathcal{E}$ is unobstructed.
\end{eg}

\begin{eg}\label{eg: unobstructed 2}
Now let $X$ be a K3 surface hence $H^{0,2}(X)\cong \mathbb{C}$. Let  $\mathcal{R}\in D^b_{\coh}(X)$ be an object such that 
\begin{equation}
\begin{split}
&\Hom_{D^b_{\coh}(X)}(\mathcal{R},\mathcal{R})\cong\Hom_{D^b_{\coh}(X)}(\mathcal{R},\mathcal{R}[2])\cong\mathbb{C},\\
&Hom_{D^b_{\coh}(X)}(\mathcal{R},\mathcal{R}[1])\cong\mathbb{C}^2.
\end{split}
\end{equation}
 For example, a skyscraper sheaf $\mathcal{O}_x$ satisfies this condition, see \cite[Example 4.12]{macri2008automorphisms}.

As shown in \cite[Section 10.2.1]{huybrechts2016lectures}, by Serre duality, the map 
$$
\str: \Hom_{D^b_{\coh}(X)}(\mathcal{R},\mathcal{R}[2])\to H^{0,2}(X)
$$
 is non-zero, hence it must be injective as both the domain and the range are $1$-dimensional. 

Let  $\mathcal{E}\in B(X)$ be such that $\underline{F}_X(\mathcal{E})\simeq \mathcal{R}$ as in Theorem \ref{thm: equiv of cats}. Then the deformation of $\mathcal{E}$ is unobstructed.
\end{eg}

\begin{rmk}
In Example \ref{eg: unobstructed 1} and Example \ref{eg: unobstructed 2} we do not require $X$ to be projective.
\end{rmk}

\bibliography{deform}{}
\bibliographystyle{alpha}

\end{document}